\numberwithin{equation}{section}
\def\e{\mathrm{e}}
\def\E{\mathbb{E}}
\def\R{\mathbb{R}}
\newtheorem{theorem}{Theorem}
\newtheorem{lemma}{Lemma}
\newtheorem{proposition}{Proposition}
\newtheorem{remark}{Remark}
\newtheorem{definition}{Definition}
\begin{document}
\title{Asymptotic uniform estimate of random batch method with replacement for the Cucker-Smale model}
\author[a,b]{Shi Jin\thanks{shijin-m@sjtu.edu.cn}}
\author[a]{Yuelin Wang \thanks{sjtu$\_$wyl@sjtu.edu.cn}}
\author[c]{Yuliang Wang \thanks{yuliang.wang2@duke.edu}}
\affil[a]{School of Mathematical Sciences, Shanghai Jiao Tong University, Shanghai, 200240, China. }
\affil[b]{Institute of Natural Sciences and MOE-LSC, Shanghai Jiao Tong University, Shanghai 200240, China.}
\affil[c]{Department of Mathematics, Duke University, Durham, NC 27708, USA.}

\maketitle

\begin{abstract}
    The Random Batch Method (RBM) [S. Jin, L. Li and J.-G. Liu, Random Batch Methods (RBM) for interacting particle systems, J. Comput. Phys. 400 (2020) 108877] is not only an efficient algorithm for simulating interacting particle systems, but also a randomly switching networked model for interacting particle system. This work investigates two RBM variants (RBM-r and RBM-1) applied to the Cucker-Smale flocking model. We establish the asymptotic emergence of global flocking and derive corresponding error estimates. By introducing a crucial auxiliary system and leveraging the intrinsic characteristics of the Cucker-Smale model, and under suitable conditions on the force, our estimates are uniform in both time and particle numbers. In the case of RBM-1, our estimates are sharper than those in Ha et al. (2021). Additionally, we provide numerical simulations to validate our analytical results.\\
    
    \textbf{Keywords:} Random Batch Method, Cucker-Smale model, interacting particle system.

\end{abstract}

\section{Introduction}

        Collective behaviors in many-body systems are prevalent in the natural world, such as the flocking of birds \cite{ha2009simple, Ha2008particle, cucker2007emergent, motsch2011new}, swarming of fish \cite{toner1998flocks}, synchronicity of fireflies \cite{kuramoto1975self, buck1966biology}, and the behavior of pacemaker cells \cite{peskin1975mathematical}. We use the term ``flocking" to describe the process by which self-propelled particles organize into coordinated motion, based solely on limited environmental information and simple rules \cite{toner1998flocks}. For a non-exhaustive list of   literature on collective behaviors and related models, we  refer to  \cite{degond2008continuum, justh2002simple, albi2019vehicular, bellomo2017quest, bellomo2012mathematical, vicsek1995novel, ha2016collective, dorfler2014synchronization, topaz2004swarming} and references therein. 

    The Cucker-Smale model, introduced by Cucker and Smale \cite{cucker2007emergent}, is a well-known model of collective behavior that phenomenologically describes flockings. It is formulated as an $N$-body second-order system of ordinary differential equations that govern the positions and velocities of particles, resembling Newton's laws of motion. Let $X^i$ and $V^i$ be the position and velocity of the $i$-th particle with unit mass, and $\psi(| X^j -X^i|)$ be the communication weight between the $j$-th and $i$-th particles. The Cucker-Smale model reads as the following
    \begin{equation}\label{eq: cs}
        \left\{\begin{aligned}
            \frac{d}{dt} X^i(t) =& V^i(t),\quad i=1,\cdots, N,\\
            \frac{d}{dt} V^i(t) =& \frac{1}{N-1}\sum\limits_{j=1}^N 
            \psi(|X^j(t)-X^i(t)|)(V^j(t)-V^i(t)),\\
            X^i(0) =& X^{in,i},\quad V^i(0)=V^{in,i},
        \end{aligned}\right.
    \end{equation}
    where the nonnegative coupling strength $\psi$ satisfies positivity, boundedness, Lipschitz continuity and mononticity conditions, i.e., there exist positive constants $\psi_0, \psi_M  > 0 $  such that 
    \begin{equation}\label{ass:psi}
        0< \psi_0 \le \psi(r) \le \psi_M,\, \forall r\ge0; \quad \|\psi\|_\text{{Lip}}<\infty;\quad (\psi(r_1)-\psi(r_2))(r_1-r_2)\le 0,\, r_1,r_2 \in \mathbb{R}_+.
    \end{equation}
    

    In this model, each particle interacts with $N-1$ other particles, resulting in a computational cost of $\mathcal{O}(N^2)$ per time step. To address this complexity, the Random Batch Method (RBM) is proposed by Jin et al. in 2020 \cite{jin2020random}, providing an efficient algorithm that reduces the computational cost to $\mathcal{O}(N)$. The RBM constructs a randomly decoupled system comprised of subsystems that interact among $p$ particles, where $p\ll N$. At each time step, interactions occur only within small batches of $p$ particles. Therefore, the RBM-approximation system for \eqref{eq: cs} becomes a randomly switching networked system \cite{dong2020stochastic} along time. The random selection of batches allows for a time-averaged effect, making this approach a good approximation of the original system \cite{jin2020random, ha2021uniform, jin2022random}.

    In the original work \cite{jin2020random}, the authors proposed two random methods, named RBM-1 (the RBM without replacement) and RBM-r (the RBM with replacement), and provided an error estimate for the former. These methods approximate the time evolution of large particle systems by decomposing the full system into smaller, randomly coupled subsystems, thereby reducing computational complexity. 
    The main distinction between the RBM-r and the RBM-1 lies in the use of various random methods to select particles. The algorithmic details are elaborated in Section \ref{subsec:rbmrrbm1}. In RBM-1, the entire system is randomly divided into $\lceil N/p \rceil$ batches of size $p$, and each subsystem evolves simultaneously and independently. Subsequent studies have primarily focused on analyzing RBM-1, see the survey article \cite{jin2021randomreview}. In particular, several works have addressed error estimates for the Cucker-Smale model and generalized consensus models \cite{ha2021uniform, ko2021uniform, ha2021convergence}. 
    
    Differently, the RBM-r randomly selects a batch of size $p$ during each time step, allowing only the particles within this batch to interact briefly. This dynamic approach in RBM-r is reminiscent of the selection-interaction philosophy found in the kinetic Monte Carlo (KMC) method \cite{bortz1975new,voter2007introduction} for first-order pairwise interacting particle systems. It can be viewed as a generalization of the Bird algorithm in \cite{albi2013binary} for the mean-field equation. From a numerical simulation perspective, RBM-r is simpler to implement and has applications in various fields, including quantum simulation \cite{jin2020randomQMC}, molecular dynamics \cite{liang2021random,gao2024rbmd}, and enhanced Monte Carlo sampling method \cite{li2020random}.

    However, to our knowledge, only one recent study has analyzed RBM-r \cite{cai2024convergence}. 
    In \cite{cai2024convergence}, the authors provide a convergence analysis for the RBM-r approximation of the first-order system in the Wasserstein-2 distance. For the deterministic interaction particle system, they obtained  a rate of $\mathcal{O}(\sqrt{1+T}(N/p)^{\frac{3}{4}}\tau^{\frac{1}{2}})$, which depends on the particle number $N$, the time step $\tau$ and time $T$. Inspired by this work, we discuss the asymptotic flocking dynamics and the convergence of the Cucker-Smale model based on the RBM-r approximation.

    In this paper, our main contribution lies on two aspects.
    \begin{itemize}
        \item We present the stochastic flocking analysis of the RBM systems.
        \item We present uniform-in-time error estimates of $\mathcal{O}(\tau)$ with exponential decay.
    \end{itemize}


    The primary difficulty stems from the fundamental differences between the Cucker-Smale model and the Langevin dynamics, the latter studied in \cite{cai2024convergence}. Unlike the Langevin dynamics, which relies on contraction properties (as discussed in \cite{jin2020random,cai2024convergence}), the analysis of the Cucker-Smale model depends on proving asymptotic flocking behavior—a property that requires separate verification.

    Moreover, the analysis of RBM-r is quite different from prior works on RBM-1. The RBM-r allows for replacement, meaning it employs independent selection of random batches, which is a significant departure from RBM-1. Consequently, while both methods maintain the same computational cost during their effective periods, the RBM-r possesses a different effective time of $\frac{N}{p}t$ compared to $t$ of the RBM-1. Thus, the techniques used in estimating the RBM-1 for the Cucker-Smale model \cite{cai2024convergence} cannot be directly applied to RBM-r. 
    
    To address this issue, we first present a stochastic flocking analysis of the RBM-r system. By the crucial observation of the momentum conservation, we prove that the RBM-r (Theorem \ref{thm:flocking}) shares the similar flocking dynamics with the original Cucker-Smale system.:
    $$\E \frac{1}{N^2}\sum\limits_{i,j} |\tilde{V}^i(\frac{N}{p}t)-\tilde{V}^j(\frac{N}{p}t)|^2 \le  \frac{1}{N^2}\sum\limits_{i,j} |\tilde{V}^i(0)-\tilde{V}^j(0)|^2 \exp\left(-\frac{2N}{N-1} \frac{\psi_0}{1+\frac{2p}{p-1}\psi_0\tau}t\right).$$
    
    Building upon the flocking analysis, we establish our second main result: the convergence analysis.  Inspired by \cite{cai2024convergence}, we employ an auxiliary system as an intermediate bridge to connect the original system \eqref{eq: cs} with the RBM-r approximation. (See Section \ref{subsec:aux sys} in the main text.) Utilizing the exchangeability of the particles and combinatorial tools, we derive a uniform-in-time error estimate with exponential decay (Theorem \ref{thm:main}): 
    \begin{equation*}
        \E \frac{1}{N}\sum\limits_{i=1}^N\left|\tilde{V}^i(\frac{N}{p}t) - V^i(t)\right|^2 \le C\tau \left(1-\frac{p}{N}+\frac{1}{p-1}-\frac{1}{N-1}\right)\e^{-C_3 t} + C\tau^2 \e^{-\frac{C_3}{2} t}.
    \end{equation*}
    Here, $C_1$ and $C_3$ are constants defined in \eqref{ntt:C1C2}, and $C$ is a constant depending on $\psi,$ $D(X^{in})$ and $D(V^{in})$.
    
    In addition, our method can similarly be applied to the analysis of the RBM-1, yielding the flocking emergence (Theorem \ref{thm:flockingrbm1}) and error estimate of the RBM-1:
    \begin{equation*}
        \E \frac{1}{N}\sum\limits_{i=1}^N\left|\tilde{V}^i_{(1)}(t) - V^i(t)\right|^2 \le  C\tau \left(\frac{1}{p-1}-\frac{1}{N-1}\right)\e^{-C_1 t} +  C\tau^2\e^{-C_1 t},
    \end{equation*}
    which improves the estimate of $\mathcal{O}(e^{-Ct} + \tau \left(\frac{1}{p-1}-\frac{1}{N-1}\right) + \tau^2)$ in the prior work \cite{ha2021uniform}.

    The rest of this paper is organized as follows. In Section \ref{sec:preliminaries}, we provide preliminary information, including the properties of the Cucker-Smale model, details of the RBMs, and an introduction to a key auxiliary dynamical system. In Section \ref{sec:mainresult}, we present our main theorems, where the proofs will by given in Section \ref{sec: pf of thms}. For better organization, we leave the auxiliary lemmas in Section \ref{sec:auxiliary lemmas}. Section \ref{sec:numerical} offers numerical simulations to evaluate the error over time evolution. Finally, Section \ref{sec:conclusion} is devoted to a summary of our main results and some remaining issues to be explored in the future.
    
    \paragraph{Notation:} For readers' convenience, we give a list of notations here.
    We set
    \begin{align*}
        &X^i := (X^{i_1},\cdots,X^{i_d})\in\R^{d}, \,i=1\cdots,N ,\quad X:=(X^1,\cdots,X^N)\in\R^{Nd},\\
        &V^i := (V^{i_1},\;\cdots,V^{i_d})\in\R^{d}, \,i=1\cdots,N ,\quad V:= (\,V^1,\:\cdots,V^N)\in\R^{Nd}.
    \end{align*}
    For the sake of notation simplicity, we also denote
    $$Z^i = (X^i,V^i)\in\mathbb{R}^{2d}, \,i=1\cdots,N ,\quad Z:= (X,V)\in \mathbb{R}^{2Nd}.$$    
    We use the following handy notation:
    \begin{equation*}
        \sum\limits_i := \sum\limits_{i=1}^N, \quad \sum\limits_{i,j} = \sum\limits_{i=1}^N\sum\limits_{j=1}^N,\quad \max\limits_i: = \max\limits_{1\le i\le N}.
    \end{equation*}
    Set the constants
    \begin{equation}\label{ntt:C1C2}
        C_1:=\frac{2\psi_0}{1+\frac{2p}{p-1}\psi_0\tau}, \quad C_2:= \psi_0(1-\psi_0 \tau),\quad C_3:=\min\{C_1,\, 2C_2\}.
    \end{equation}
    
    Denote the diameters of compact support in spatial and velocity variables at time $t$ by $\mathcal{D}_X(t)$ and $\mathcal{D}_V(t)$ respectively, i.e.,
    \begin{equation}\label{eq:3.13}
        \mathcal{D}_X(t) := \max\limits_{i,j}\| X^i-X^j\|_2, \quad \mathcal{D}_V(t) := \max\limits_{i,j}\| V^i-V^j\|_2.
    \end{equation}
    
    Also, we use several auxiliary discrete time dependent random quantities. More details see Section \ref{subsec:aux sys}. We denote the stopping time $\zeta_n^i$ the $n$-th time the particle with index $i$ is chosen into the batch:
    \begin{equation*}
        \zeta_n^i :=\inf \left\{K : \sum\limits_{k=0}^K \mathbb{I}_{\{i\in\mathcal{C}_k\}}\tau \ge t \right\}, \,\forall n>0; \quad \zeta_0^i :=0.
    \end{equation*}
    Based on the definition of $\zeta_n^i$, we denote the total number of times that the index $i$ is selected before time $t\in[t_k,t_{k+1})$:
    \begin{equation*}
        \eta_t^i:= \left\{\begin{aligned}
            &\sup \{ n; \zeta_n^i \tau < t, \, n\in \mathbb{N}\},\quad &i\notin \mathcal{C}_k,\\
            &\sup \{ n; \zeta_n^i \tau < t, \, n\in \mathbb{N}\}+1,\quad &i\in \mathcal{C}_k.
        \end{aligned}\right.
    \end{equation*}
    The time period during which the particle $i$ is chosen is denoted to be
    \begin{equation*}
        t^{(i)} := \left\{\begin{aligned}
            &\eta_t^{i}\tau + t-t_{k+1} , \quad i \in \mathcal{C}_k,\\
            & \eta_t^{i} \tau, \quad i \notin \mathcal{C}_k.
        \end{aligned}\right.
    \end{equation*}

\section{Preliminaries}\label{sec:preliminaries}

In this section, we outline the properties of the Cucker-Smale model and provide a brief introduction to the Random Batch Method.

\subsection{Properties.}
As mentioned in the introduction, the Cucker-Smale model is inspired by the collective behaviors of birds and fishes in the natural world. It possesses several properties that align with physical intuition. Below, we present several key attributes of the Cucker-Smale model.

\paragraph{Conservation law.}
It is easy to see that Equation \eqref{eq: cs} conserves the first-order momentum, i.e.,
\begin{equation*}
    \sum\limits_{i=1}^N V^i(t)= \sum\limits_{i=1}^N V^{in,i}.
\end{equation*}
Additionally, the total energy does not increase as time progresses.

\begin{proposition}
Let the $\{(X^i,V^i)\},$ $1\le i\le N,$ be the solution of  system \eqref{eq: cs}. Then for any $t>0,$ the total momentum is conserved as a constant and the total energy is nonincreasing.
\end{proposition}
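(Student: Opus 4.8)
The plan is to establish the two assertions of the proposition separately, both by direct differentiation in time along the flow of \eqref{eq: cs}.

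First, for the conservation of the total momentum $P(t) := \sum_{i=1}^N V^i(t)$, I would differentiate and substitute the velocity equation of \eqref{eq: cs} to get
\begin{equation*}
    \frac{d}{dt}\sum_{i=1}^N V^i(t) = \frac{1}{N-1}\sum_{i=1}^N\sum_{j=1}^N \psi(|X^j-X^i|)(V^j-V^i).
\end{equation*}
The key point is that the double sum vanishes by antisymmetry: the summand is of the form $a_{ij}(V^j-V^i)$ with $a_{ij}=a_{ji}$ (since $\psi(|X^j-X^i|)=\psi(|X^i-X^j|)$), so swapping the roles of $i$ and $j$ sends the sum to its own negative. Hence $\frac{d}{dt}P(t)=0$, and integrating gives $P(t)=P(0)=\sum_i V^{in,i}$.

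For the energy, the natural candidate is the kinetic-type energy $E(t) := \frac{1}{2}\sum_{i=1}^N |V^i(t)|^2$ (this is the ``total energy'' meant here, since the interaction is dissipative rather than potential). Differentiating,
\begin{equation*}
    \frac{d}{dt}E(t) = \sum_{i=1}^N V^i\cdot \dot V^i = \frac{1}{N-1}\sum_{i,j}\psi(|X^j-X^i|)\, V^i\cdot(V^j-V^i).
\end{equation*}
Now I would symmetrize: relabel $i\leftrightarrow j$ in the sum, add the two expressions, and divide by two. Using $\psi(|X^j-X^i|)=\psi(|X^i-X^j|)$, this yields
\begin{equation*}
    \frac{d}{dt}E(t) = \frac{1}{2(N-1)}\sum_{i,j}\psi(|X^j-X^i|)\,(V^i-V^j)\cdot(V^j-V^i) = -\frac{1}{2(N-1)}\sum_{i,j}\psi(|X^j-X^i|)\,|V^i-V^j|^2 \le 0,
\end{equation*}
where the last inequality uses the nonnegativity (indeed positivity) of $\psi$ from \eqref{ass:psi}. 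Thus $E$ is nonincreasing.

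I do not anticipate a genuine obstacle here — both parts are one-line symmetrization arguments — but the one point that requires care is making precise what ``total energy'' means for this first-order-in-velocity dissipative system: there is no potential energy, so one should state explicitly that $E(t)=\tfrac12\sum_i|V^i(t)|^2$ and that the monotonicity comes purely from the sign of $\psi$, not from any conservative structure. I would also remark that combining the two facts shows the velocity fluctuation energy $\sum_i |V^i - \bar V|^2$ (with $\bar V$ the conserved mean velocity) is itself nonincreasing, which is exactly the quantity that the flocking estimates later control.
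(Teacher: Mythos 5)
Your proposal is correct and follows essentially the same route as the paper: the paper's proof simply records the two identities $\frac{d}{dt}\sum_i V^i=0$ and $\frac{d}{dt}\sum_i|V^i|^2=-\frac{1}{N-1}\sum_{i,j}\psi(|X^j-X^i|)|V^j-V^i|^2$, which are exactly the symmetrization computations you carry out (your extra factor $\tfrac12$ just reflects your normalization $E=\tfrac12\sum_i|V^i|^2$). Your closing remarks on the meaning of ``total energy'' and on the velocity fluctuation functional are accurate and consistent with how the paper uses these facts later.
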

\begin{proof}
    The proof is straightforward since
    $$\frac{d}{dt}\sum\limits_{i=1}^N V^i (t) = 0,$$ and
    $$\frac{d}{dt}\sum\limits_{i=1}^N |V^i(t)|^2 = -\frac{1}{N-1}\sum\limits_{i,j}\psi(|X^j(t) - X^i(t)|) |V^j (t) - V^i (t) |^2.$$
\end{proof}

\paragraph{Translation-invariance.}
 By straightforward calculus, it's easy to find that  system \eqref{eq: cs} is invariant under the translation $V^{i,\prime}=V^i + c$ for any constant vector $c.$ Therefore, we set $\sum\limits_{i=1}^N V^i(0)=0$ in this paper without loss of generality.

\paragraph{Flocking.}
The most notable characteristic of the Cucker-Smale model is asymptotic flocking, which illustrates the emergence of fundamental collective behavior. This concept is defined  below.
    \begin{definition}\label{def:flocking}
        Let $(X,V)$ be a solution to \eqref{eq: cs}. Then $(X,V)$ exhibits asymptotic flocking if the following relations hold,
        \begin{equation*}
            \sup\limits_{0<t<\infty} |X^i(t)-X^j(t)|<\infty, \quad \lim\limits_{t\to\infty} |V^i(t)-V^j(t)|=0, \quad 1<i,j<N.
        \end{equation*}
    \end{definition}
Previously the following asymptotic flocking estimate eas established. 
\begin{proposition}[\cite{ha2021uniform}, Lemma 2.2, Proposition 2.1]\label{lem:V2diff}
    Let $(X,V)$ be a solution to \eqref{eq: cs} with a zero-sum condition:
    $$\sum\limits_{i=1}^N V^i(t) = 0, \quad t\ge 0.$$ Suppose that the coupling strength and the initial data $(X^{in},V^{in})$ satisfy
    \begin{equation*}
        D(V^{in})< \frac{1}{2}\int_{D(X^{in})}^\infty \psi(s)ds.
    \end{equation*}
    Then there exists a positive constant $x_\infty$ such that 
    \begin{equation*}
        \sup\limits_{t}D_X(t) \le x_\infty, \quad D_V(t) \le D(V^{in})\e^{-\psi(x_\infty)t}, \quad t\ge 0.
    \end{equation*}
\end{proposition}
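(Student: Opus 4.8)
The plan is to prove the two assertions — the uniform-in-time bound on $D_X(t)$ and the exponential decay of $D_V(t)$ — together, by a bootstrap (continuity) argument on the pair of scalar quantities $D_X(t)$ and $D_V(t)$. First I would derive a differential inequality for $D_V$. Fix a time $t$ and indices $M, m$ realizing the maximum and minimum of some component of the velocity spread (or, working with the $\ell^2$-diameter directly, pick $i,j$ attaining $D_V(t) = \|V^i - V^j\|_2$); using \eqref{eq: cs} and the zero-sum condition $\sum_k V^k = 0$, compute $\frac{d}{dt}\tfrac12 D_V(t)^2$ along these indices. The key structural point is that the communication weights are bounded below by $\psi(D_X(t))$ whenever $|X^k - X^\ell| \le D_X(t)$, which combined with monotonicity of $\psi$ and the averaging $\frac{1}{N-1}\sum_k$ yields
\begin{equation*}
    \frac{d}{dt} D_V(t) \le -\psi(D_X(t))\, D_V(t)
\end{equation*}
in the a.e./Dini-derivative sense (the standard care with non-smoothness of the max is routine). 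Simultaneously, since $|\frac{d}{dt} X^i| = |V^i|$ and all velocities lie within $D_V(t)$ of the (zero) mean, one has $\frac{d}{dt} D_X(t) \le D_V(t)$.

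Next I would run the continuity argument on the quantity $\mathcal{E}(t) := D_V(t) + \tfrac12\int_0^{D_X(t)} \psi(s)\,ds$. Differentiating and using the two inequalities above,
\begin{equation*}
    \frac{d}{dt}\mathcal{E}(t) \le -\psi(D_X(t)) D_V(t) + \tfrac12 \psi(D_X(t)) D_V(t) = -\tfrac12 \psi(D_X(t)) D_V(t) \le 0,
\end{equation*}
so $\mathcal{E}(t) \le \mathcal{E}(0) = D(V^{in}) + \tfrac12\int_0^{D(X^{in})}\psi(s)\,ds$. Hence $\tfrac12\int_0^{D_X(t)}\psi(s)\,ds \le D(V^{in}) + \tfrac12\int_0^{D(X^{in})}\psi(s)\,ds$, and the hypothesis $D(V^{in}) < \tfrac12\int_{D(X^{in})}^\infty \psi(s)\,ds$ guarantees the right-hand side is strictly less than $\tfrac12\int_0^\infty \psi(s)\,ds$; since $\psi > 0$ the map $r \mapsto \int_0^r \psi$ is strictly increasing, so $D_X(t)$ stays below a finite threshold $x_\infty$ defined by $\tfrac12\int_0^{x_\infty}\psi(s)\,ds = D(V^{in}) + \tfrac12\int_0^{D(X^{in})}\psi(s)\,ds$ (a strict inequality lets the bootstrap close, avoiding reaching equality). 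With $D_X(t) \le x_\infty$ in hand, $\psi(D_X(t)) \ge \psi(x_\infty)$ by monotonicity, so $\frac{d}{dt} D_V(t) \le -\psi(x_\infty) D_V(t)$, and Grönwall gives $D_V(t) \le D(V^{in})\e^{-\psi(x_\infty)t}$.

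The main obstacle I anticipate is purely technical rather than conceptual: justifying the differential inequalities for the non-smooth quantities $D_X(t)$ and $D_V(t)$ (which are maxima over finitely many pairs of smooth functions, hence locally Lipschitz but not $C^1$). This is handled in the standard way — work with upper Dini derivatives, note that at a.e.\ $t$ the maximizing pair is locally constant, and invoke a Rademacher/absolute-continuity argument so that integrating the Dini derivative inequality is legitimate — but it is the step that requires the most care to state correctly. Everything else (the use of the zero-sum condition to bound the drift term, the monotonicity of $\psi$, the strict-inequality bootstrap) is then a direct computation. Since the statement is quoted from \cite{ha2021uniform}, I would present this as a sketch and refer there for the full details.
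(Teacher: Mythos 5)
Your proposal is correct and is essentially the standard argument: the paper itself gives no proof of this proposition (it is quoted directly from Ha et al.), and your system of dissipative differential inequalities $\frac{d}{dt}D_X \le D_V$, $\frac{d}{dt}D_V \le -\psi(D_X)D_V$ combined with the Lyapunov functional $D_V + \tfrac12\int_0^{D_X}\psi$ is precisely the argument used in that reference (going back to Ha--Liu). The only cosmetic remark is that under the standing assumption $\psi \ge \psi_0 > 0$ of this paper the monotonicity of $\psi$ and the Dini-derivative care you flag already suffice, and the zero-sum condition is not actually needed for the diameter formulation you chose.
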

In particular, under the assumption \eqref{ass:psi} on $\psi,$ for any $t_2\le t_1,$ by direct computation, one obtains 
    \begin{equation*}
         |V^i(t_2)-V^i(t_1)|^2 \le\psi_M^2 (t_2-t_1)^2 D(V^{in})\e^{-2\psi_0 t_1}.
    \end{equation*}

\subsection{The Random batch method with replacement.}\label{subsec:rbmrrbm1}

    Now we present the details of the RBM. In each sub-time interval $[t_k, t_{k+1}),$ particle $i$ only interacting with those within a specific, randomly selected or partitioned small batch that includes particle $i$. The varying random division approaches lead to the RBM-1 and the RBM-r. The RBM-1 evenly divides all particles into batches of size $p$ at each step and computes the dynamics for each batch subsystem. In contrast, RBM-r randomly selects a  batch of size $p$ and only compute the dynamics of the chosen subsystem. A notable feature is that, in a single time step, the computational effort of RBM-r is $p/N$ times that of RBM-1. It can be conjectured that the results of RBM-r after $\lceil N/p \rceil$ steps will converge to those of RBM-1 in expectation. Additionally, in the $N/p$ steps, RBM-r may select certain same particle $i$ multiple times, whereas in RBM-1, particle $i$ can only be selected once per time step. This is why RBM-r is referred to as RBM with replacement.
    
    We set the initial data $(\tilde{X}^i(0),\tilde{V}^i(0)) = (X^{in,i},V^{in,i}),$ for $ i = 1,\cdots,N$, uniformly. We illustrate RBM-1 and RBM-r as examples derived from the Cucker-Smale model. For details, see Algorithm \ref{algo: the RBM1} and Algorithm \ref{algo: the RBMr}, respectively. 
    This comparison highlights the differences between the two methods, using the initial data from system \eqref{eq: cs}.

\begin{algorithm}\label{algo: the RBM1}
\caption{ The RBM-1 for \eqref{eq: cs}}
\For{$k=1$ to $T/\tau$}
    {Divide $\{1,\cdots, N\}$ into $n = N/p$ batches randomly. Denote all the batches $\xi_k=(\xi_k(1),\cdots, \xi_k(\frac{N}{p}))$\;
    
    \For{each batch belonging to $\xi_k$ do}
        {
        Update $(\tilde{X}^i,\tilde{V}^i)$ in $\mathcal{C}_q$ by solving 
        \begin{equation}\label{eq: the RBM-1cs}
            \left\{\begin{aligned}
                \partial_t \tilde{X}^i(t) =& \tilde{V}^i(t),\\
                \partial_t \tilde{V}^i(t) =& \frac{1}{p-1}\sum\limits_{j\in\mathcal{C}_q}
                \psi(|\tilde{X}^j(t)-\tilde{X}^i(t)|)(\tilde{V}^j(t)-\tilde{V}^i(t)),
            \end{aligned}\right.
        \end{equation}
        for $t\in [t_k,t_{k+1}).$ 
        }
    }
\end{algorithm}

\begin{algorithm}\label{algo: the RBMr}
\caption{ The RBM-r for \eqref{eq: cs}}
\For{$k=1$ to $\frac{NT}{p\tau}$}
    {Pick a batch $\mathcal{C}_k$ of size $p$ randomly.
    Update $(\tilde{X}^i,\tilde{V}^i)$ in $\mathcal{C}_k$ by solving 
    \begin{equation}\label{eq: the RBMcs}
        \left\{\begin{aligned}
            \partial_t \tilde{X}^i(t) =& \tilde{V}^i(t),\\
            \partial_t \tilde{V}^i(t) =& \frac{1}{p-1}\sum\limits_{j\in\mathcal{C}_k}
            \psi(|\tilde{X}^j(t)-\tilde{X}^i(t)|)(\tilde{V}^j(t)-\tilde{V}^i(t)),
        \end{aligned}\right.
    \end{equation}
    for $t\in [t_k,t_{k+1}).$ 
    }
\end{algorithm}


    For an RBM-r system described by \eqref{eq: the RBMcs} with a given random batch division, the flocking property cannot be directly derived. However, the boundedness of velocity can still be established, as stated in the following proposition.  
    \begin{proposition}\label{ineq: coarseD}
        Let $\tilde{X},\tilde{V}$ be a solution of the RBM-r model (Algorithm \ref{algo: the RBMr}) with initial data $f_0$ satisfying 
        $$
        \mathcal{D}(X^{in}) + \mathcal{D}(V^{in})<\infty.
        $$
        Then one has $\mathcal{D}_{\tilde{V}} (t) \le \mathcal{D}(V^{in}).$
    \end{proposition}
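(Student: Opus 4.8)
The plan is to show that the quantity $\mathcal{D}_{\tilde V}(t) = \max_{i,j}\|\tilde V^i(t) - \tilde V^j(t)\|_2$ is nonincreasing in $t$, which immediately gives $\mathcal{D}_{\tilde V}(t) \le \mathcal{D}_{\tilde V}(0) = \mathcal{D}(V^{in})$. Since the RBM-r dynamics is piecewise-defined on the intervals $[t_k, t_{k+1})$, it suffices to argue that on each such interval the velocity diameter does not grow; continuity of $\tilde V$ across grid points then chains the inequalities together. So fix a step $[t_k,t_{k+1})$ with chosen batch $\mathcal{C}_k$. Particles with index $i \notin \mathcal{C}_k$ have $\partial_t \tilde V^i = 0$, so their velocities are frozen, while particles in $\mathcal{C}_k$ evolve by the batch Cucker--Smale dynamics \eqref{eq: the RBMcs}.

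The key step is a convexity/extremal argument on the batch. I would first observe that the frozen velocities $\{\tilde V^i : i\notin\mathcal{C}_k\}$ stay inside the convex hull $H$ of the full velocity set at time $t_k$, so it is enough to control the batch particles. For the batch particles, consider the time-dependent convex hull $H_k(t)$ of $\{\tilde V^i(t) : i \in \mathcal{C}_k\}$. Because $\partial_t \tilde V^i = \frac{1}{p-1}\sum_{j\in\mathcal{C}_k}\psi(|\tilde X^j - \tilde X^i|)(\tilde V^j - \tilde V^i)$ is a convex-combination-type relaxation toward the other batch velocities (each $\tilde V^i$ is pushed toward a weighted average of the $\tilde V^j$, $j\in\mathcal{C}_k$), the set $\{\tilde V^i(t)\}_{i\in\mathcal{C}_k}$ contracts into its own convex hull: formally, for any unit vector $e$, the maximum of $e\cdot \tilde V^i(t)$ over $i\in\mathcal{C}_k$ is nonincreasing, since at an index $i^*$ attaining the max one has $e\cdot\partial_t\tilde V^{i^*} = \frac{1}{p-1}\sum_j \psi(\cdots)\, e\cdot(\tilde V^j - \tilde V^{i^*}) \le 0$ by nonnegativity of $\psi$ and maximality. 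Applying this with $e$ and $-e$ shows the support function of $H_k(t)$ is nonincreasing in every direction, hence $H_k(t)\subseteq H_k(t_k)\subseteq H$. Therefore at any $t\in[t_k,t_{k+1})$ \emph{all} velocities (batch and non-batch) lie in $H$, which has diameter $\le \mathcal{D}_{\tilde V}(t_k)$, giving $\mathcal{D}_{\tilde V}(t)\le \mathcal{D}_{\tilde V}(t_k)$. Iterating over $k$ and using $\mathcal{D}_{\tilde V}(0) = \mathcal{D}(V^{in})$ completes the proof; the finiteness of $\mathcal{D}(X^{in})+\mathcal{D}(V^{in})$ guarantees $\mathcal{D}_{\tilde V}(0)<\infty$ so the bound is meaningful.

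The main obstacle is making the ``max over $i$ is nonincreasing'' argument rigorous when the maximizing index $i^*$ switches over time: the function $t\mapsto \max_{i\in\mathcal{C}_k} e\cdot\tilde V^i(t)$ is only Lipschitz, not $C^1$, so one must invoke a Danskin-type / upper Dini derivative argument (its upper-right derivative equals $\max$ over active indices of $e\cdot\partial_t\tilde V^{i}$, which is $\le 0$) or, alternatively, a Grönwall-style comparison on the pairwise differences $\|\tilde V^i - \tilde V^j\|$. An entirely elementary alternative that sidesteps this is to bound directly: for $i,j\in\mathcal{C}_k$, $\frac{d}{dt}\|\tilde V^i - \tilde V^j\|_2^2 = 2(\tilde V^i - \tilde V^j)\cdot(\partial_t\tilde V^i - \partial_t\tilde V^j)$ and expand, showing it is $\le 0$ after using that each summand $\psi(\cdot)(\tilde V^\ell - \tilde V^i)$ or $\psi(\cdot)(\tilde V^\ell - \tilde V^j)$ contributes nonpositively when paired against $\tilde V^i - \tilde V^j$; combined with the mixed batch/non-batch pairs (where one velocity is frozen and the argument is the same one-sided estimate), this yields $\mathcal{D}_{\tilde V}^2(t)$ nonincreasing. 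Either route is routine once set up; I would present the convex-hull version as the cleanest.
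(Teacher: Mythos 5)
Your proposal is correct, and your primary (convex-hull / support-function) route is a genuinely cleaner packaging of the same maximality principle the paper uses. The paper's proof (Appendix A) picks a time-dependent maximizing pair $(k,\ell)$ for $\mathcal{D}_{\tilde V}(t)$ and splits into three cases according to whether $k$ and $\ell$ lie in the current batch $\mathcal{C}_m$, showing $\frac{d}{dt}|\tilde V^k-\tilde V^\ell|^2\le 0$ in each case via the inequality $(\tilde V^j-\tilde V^k)\cdot(\tilde V^k-\tilde V^\ell)\le 0$. Your version instead shows that for every unit vector $e$ the directional maximum $\max_{i\in\mathcal{C}_k} e\cdot\tilde V^i$ is nonincreasing, so the convex hull of the batch velocities shrinks while the non-batch velocities are frozen; this eliminates the case analysis entirely (mixed batch/non-batch pairs never need separate treatment) and is the standard consensus-theoretic formulation. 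You also explicitly flag the technical point that the maximizing index can switch, so the max is only Lipschitz and one needs an upper Dini derivative (Danskin-type) argument; the paper silently works with ``time-dependent indices'' and differentiates, which is the same gloss. One caution on your ``entirely elementary alternative'': as literally stated, the claim that $\frac{d}{dt}\|\tilde V^i-\tilde V^j\|_2^2\le 0$ for \emph{all} pairs $i,j\in\mathcal{C}_k$ is false for Cucker--Smale-type dynamics (individual pairwise distances can grow even as the diameter shrinks); the summands $\psi(\cdot)(\tilde V^\ell-\tilde V^i)\cdot(\tilde V^i-\tilde V^j)$ are only guaranteed nonpositive when $(i,j)$ is a pair attaining the maximum, which is exactly the restriction the paper imposes. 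Since your main argument does not rely on this alternative, the proposal stands.
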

    Proposition \ref{ineq: coarseD} was first introduced as Lemma 2.4 in \cite{ha2021uniform} for the RBM-1 system.
    We prove it for the RBM-r in Appendix \ref{app:A}. Note that this estimate works on any fixed sequence of batches.

\subsection{Auxiliary dynamics.}\label{subsec:aux sys}
    For better organization, we present three dynamics that will be utilized in the proof of Theorem \ref{thm:main}. 
    
    Due to the differences between RBM-r and RBM-1 mentioned above, analyzing RBM-r directly can be challenging. To address this issue, we introduce an intermediate dynamics, consisting of $N$ copies of the original system after a random time change. This insight is crucial. We refer to this intermediate system as IPS', in contrast to the original interacting particle system as IPS.

    In details, for the Cuker-Smale model \eqref{eq: cs}, we define the following dynamic triple $(\tilde{Z},\hat{Z},Z)$ where $Z:=(X,V)$ denotes a pair of position and velocity, with the initial state
    \begin{equation*}
        \tilde{Z}^\ell(0) = \hat{Z}^{\ell i}(0) = Z^\ell(0), \quad \forall 1\le \ell,i\le N.
    \end{equation*}
    We list the three dynamical systems below.
    \paragraph{RBM-r:} For $1\le i \le N,$ $t\in [t_k,t_{k+1}),$
     \begin{equation}\label{eq:rbmr}
        \left\{\begin{aligned}
            \partial_t \tilde{X}^i(t) =& \tilde{V}^i(t),\\
            \partial_t \tilde{V}^i(t) =& \frac{1}{p-1}\sum\limits_{j\in\mathcal{C}_k}
            \psi(|\tilde{X}^j(t)-\tilde{X}^i(t)|)(\tilde{V}^j(t)-\tilde{V}^i(t)),
        \end{aligned}\right. \quad \text{if } i\in \mathcal{C}_k,
    \end{equation}
    and
    \begin{equation}\label{eq:rbmr2}
        \partial_t \tilde{X}^i(t) = 0,\quad
            \partial_t \tilde{V}^i(t) = 0, \quad \text{if } i\notin \mathcal{C}_k.
    \end{equation}
    \paragraph{IPS (\eqref{eq: cs}):} For $1\le \ell \le N,$ $t\in [t_k,t_{k+1}),$
    \begin{equation*} 
    \left\{\begin{aligned}
        \frac{d}{dt} X^\ell(t) =& V^\ell(t),\\
        \frac{d}{dt} V^\ell(t) =& \frac{1}{N-1}\sum\limits_{j=1}^N 
        \psi(|X^j(t)-X^\ell(t)|)(V^j(t)-V^\ell(t)).
    \end{aligned}\right.
    \end{equation*}
    \paragraph{IPS':} For $1\le \ell,i \le N,$ $t\in [t_k,t_{k+1}),$
    \begin{equation}\label{eq:IPS'}
    \left\{\begin{aligned}
        \frac{d}{dt} \hat{X}^{\ell i}(t) =& \hat{V}^{\ell i}(t),\\
        \frac{d}{dt} \hat{V}^{\ell i}(t) =& \frac{1}{N-1}\sum\limits_{j=1}^N 
        \psi(|\hat{X}^{ji}(t)-\hat{X}^{\ell i}(t)|)(\hat{V}^{ji}(t)-\hat{V}^{\ell i}(t)),
    \end{aligned}\right. \text{if } i\in \mathcal{C}_k,
    \end{equation}
    and
    \begin{equation*}
        \partial_t \hat{X}^{\ell i}(t) = 0,\quad
            \partial_t \hat{V}^{\ell i}(t) = 0,\quad \text{if } i\notin \mathcal{C}_k.
    \end{equation*}
    
    Under our assumption of the zero mean of initial data $\{V^{in,i}\}_{i=1}^N$, all the above systems of the RBM-r \eqref{eq:rbmr}, IPS \eqref{eq: cs} and IPS' \eqref{eq:IPS'} preserve the first velocity momentum.

    \begin{proposition}\label{lem:M}
        The first velocity momentum of RBM-r \eqref{eq:rbmr}, IPS \eqref{eq: cs}  and 
        IPS' \eqref{eq:IPS'} are conserved, i.e.
        \begin{equation*}
            \sum\limits_{i=1}^N \tilde{V}^i(t) = \sum\limits_{i=1}^N V^i(t) = \sum\limits_{i=1}^N V^{in,i}, \text{ and }\, \sum\limits_{\ell =1}^N \hat{V}^{\ell i} = \sum\limits_{\ell=1}^N V^{in,\ell}, \text{ for any } 1\le i\le N.
        \end{equation*}
    \end{proposition}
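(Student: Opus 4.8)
The plan is to verify the momentum conservation for each of the three systems separately, observing in each case that the pairwise interaction term is antisymmetric under exchange of the two interacting indices, so that summing over all particles annihilates it.

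First I would treat the RBM-r system \eqref{eq:rbmr}--\eqref{eq:rbmr2}. Fix a sub-interval $[t_k,t_{k+1})$ with chosen batch $\mathcal{C}_k$. For $i\notin\mathcal{C}_k$ the velocity is frozen by \eqref{eq:rbmr2}, so those terms contribute nothing to $\frac{d}{dt}\sum_i\tilde V^i$. For the remaining indices,
\begin{equation*}
    \frac{d}{dt}\sum_{i\in\mathcal{C}_k}\tilde V^i(t) = \frac{1}{p-1}\sum_{i\in\mathcal{C}_k}\sum_{j\in\mathcal{C}_k}\psi(|\tilde X^j-\tilde X^i|)(\tilde V^j-\tilde V^i) = 0,
\end{equation*}
because the summand is antisymmetric under $i\leftrightarrow j$ while the index set $\mathcal{C}_k\times\mathcal{C}_k$ is symmetric. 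Hence $\sum_i\tilde V^i(t)$ is constant on each sub-interval; since $\tilde V$ is continuous across the grid points $t_k$ (the ODEs only change the vector field, not the state), $\sum_i\tilde V^i(t)=\sum_i\tilde V^i(0)=\sum_i V^{in,i}$ for all $t\ge 0$. The IPS case is identical to the Proposition already proved at the start of Section~2 (with full index set $\{1,\dots,N\}$ in place of $\mathcal{C}_k$), so I would simply cite it.

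For IPS' \eqref{eq:IPS'}, the argument is the same but carried out for each fixed second index $i$: when $i\in\mathcal{C}_k$,
\begin{equation*}
    \frac{d}{dt}\sum_{\ell=1}^N\hat V^{\ell i}(t) = \frac{1}{N-1}\sum_{\ell=1}^N\sum_{j=1}^N\psi(|\hat X^{ji}-\hat X^{\ell i}|)(\hat V^{ji}-\hat V^{\ell i}) = 0
\end{equation*}
by antisymmetry in $(\ell,j)$, and when $i\notin\mathcal{C}_k$ the whole $i$-th copy is frozen so $\frac{d}{dt}\sum_\ell\hat V^{\ell i}=0$ trivially. Continuity across grid points together with the common initial condition $\hat Z^{\ell i}(0)=Z^\ell(0)$ then gives $\sum_\ell\hat V^{\ell i}(t)=\sum_\ell V^{in,\ell}$ for every $i$ and all $t$.

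There is essentially no obstacle here: the only mild point to be careful about is that the interaction vector field switches between sub-intervals, so conservation must be established interval-by-interval and then glued using the continuity of the trajectories at the breakpoints $t_k$; the antisymmetry-plus-symmetric-index-set cancellation does all the real work, and it is the same mechanism in all three systems regardless of whether the summation runs over a batch $\mathcal{C}_k$ or the full set.
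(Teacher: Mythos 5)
Your proposal is correct and follows essentially the same route as the paper: the cancellation of the antisymmetric interaction sum over the symmetric index set $\mathcal{C}_k\times\mathcal{C}_k$ (resp.\ the full index set for IPS and IPS'), together with the frozen dynamics for unselected particles and gluing across the sub-intervals. The paper's proof is terser (it only writes out the RBM-r case and declares IPS and IPS' "similar"), but the mechanism is identical, and your explicit treatment of the continuity at the breakpoints $t_k$ and of the per-copy argument for IPS' is a faithful elaboration rather than a different method.
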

    \begin{proof}
        First, for $t\in[t_k, t_{k+1}),$ 
        \begin{equation}\label{eq:LemM1}
            \frac{d}{dt}\sum\limits_{i\in \mathcal{C}_k} \tilde{V}^i(t) = \sum\limits_{i\in\mathcal{C}_k}\frac{1}{p-1} \sum\limits_{j\in\mathcal{C}_k}\psi (|\tilde{X}^j(t)-\tilde{X}^i(t)|)(\tilde{V}^j(t)-\tilde{V}^i(t)) = 0,
        \end{equation}
        by the exchangability of the particles, Then, one has
        \begin{equation*}
            \sum\limits_{i=1}^N \tilde{V}^i(t) = \sum\limits_{i\in \mathcal{C}_k}\tilde{V}^i(t) + \sum\limits_{i\notin \mathcal{C}_k}\tilde{V}^i(t) = \sum\limits_{i\in \mathcal{C}_k}\tilde{V}^i(t) + \sum\limits_{i\notin \mathcal{C}_k}\tilde{V}^i(t),
        \end{equation*}
        because of \eqref{eq:LemM1} and \eqref{eq:rbmr2}. The proofs of IPS \eqref{eq: cs} and IPS' \eqref{eq:IPS'} are similar.
    \end{proof}
    Note that in IPS', we create $N$ copies of the original system IPS, adjusting the ``run/static" time based on whether a certain particle is chosen or not. To represent this random time change, we introduce the stopping time $\zeta_n^i$, the $n$-th time the particle with index $i$ is chosen into the batch:
    \begin{equation}\label{ntt:zeta}
        \zeta_n^i :=\inf \{K : \sum\limits_{k=0}^K \mathbb{I}_{\{i\in\mathcal{C}_k\}}\tau \ge t \}, \,\forall n>0; \quad \zeta_0^i :=0.
    \end{equation}
    Based on the definition of $\zeta_n^i$, we denote the total number of times that the index $i$ is selected before time $t\in[t_k,t_{k+1})$:
    \begin{equation}\label{ntt:eta}
        \eta_t^i:= \left\{\begin{aligned}
            &\sup \{ n; \zeta_n^i \tau < t, \, n\in \mathbb{N}\},\quad &i\notin \mathcal{C}_k,\\
            &\sup \{ n; \zeta_n^i \tau < t, \, n\in \mathbb{N}\}+1,\quad &i\in \mathcal{C}_k.
        \end{aligned}\right.
    \end{equation}
    The time period during which the particle $i$ is chosen is denoted to be
    \begin{equation}\label{ntt:ti}
        t^{(i)} := \left\{\begin{aligned}
            &\eta_t^{i}\tau + t-t_{k+1} , \quad i \in \mathcal{C}_k,\\
            & \eta_t^{i} \tau, \quad i \notin \mathcal{C}_k.
        \end{aligned}\right.
    \end{equation}

\section{The main theorem}\label{sec:mainresult}

In this section, we present the main results of this paper. For better organization, we leave the proofs to Section \ref{sec: pf of thms}.

Recall the notations $X:=(X^1,\cdots,X^N)\in\R^{Nd}$ and $ V:= (V^1,\cdots,V^N)\in\R^{Nd}$. In addition, we take $\Omega$ as the sample space equipped with the uniform probability measure $\mathbb{P}$, and define the filtration $\{\mathcal{F}_n\}_{n\ge 0}$, where $\mathcal{F}_n$ is the $\sigma$-algebra generated by $\{\xi_j,\,j\le n\}$. In the following, we will use the symbol $\E$ to indicate expectation over this probability space.

\subsection{Flocking dynamics of the RBM Cucker-Smale model}
First, according to Definition \ref{def:flocking}, we formalize the concept of stochastic flocking for the RBM systems.
\begin{definition}\label{def:stochastic flocking}
        Let $(\tilde{X},\tilde{V})$ be the solution to \eqref{eq:rbmr} or \eqref{eq: the RBMcs}. Then $(X,V)$ exhibits asymptotic flocking if the following relations hold,
        \begin{equation*}
            \sup\limits_{0<t<\infty} \E|\tilde{X}^i(t)-\tilde{X}^j(t)|^2<\infty, \quad \lim\limits_{t\to\infty} \E|\tilde{V}^i(t)-\tilde{V}^j(t)|^2=0, \quad 1<i,j<N.
        \end{equation*}
    \end{definition}
Then, we provide the emergence of flocking dynamics to both the RBM-r and RBM-1 Cucker-Smale models. 
\begin{theorem}[Stochastic flocking dynamics of the RBM-r]\label{thm:flocking}
Suppose that the communication weight satisfies the assumption \eqref{ass:psi}, and let $(\tilde{X},\tilde{V})$ be the solutions to \eqref{eq:rbmr}. Then, there exists a positive constant $\tilde{x}_\infty= \tilde{x}_\infty(\psi, D(X^{in}), D(V^{in}))$ such that
\begin{enumerate}
    \item $\sup\limits_{t>0} \E \left( \frac{1}{N^2}\sum\limits_{i,j}|\tilde{X}^i(\frac{N}{p}t) - \tilde{X}^j(\frac{N}{p}t)|^2\right) < \tilde{x}_\infty$,\\
    \item $\E \frac{1}{N^2}\sum\limits_{i,j} |\tilde{V}^i(\frac{N}{p}t)-\tilde{V}^j(\frac{N}{p}t)|^2 \le  \frac{1}{N^2}\sum\limits_{i,j} |\tilde{V}^i(0)-\tilde{V}^j(0)|^2 \exp\left(-\frac{2N}{N-1} \frac{\psi_0}{1+\frac{2p}{p-1}\psi_0\tau}t\right).$
\end{enumerate}
\end{theorem}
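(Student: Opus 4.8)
The plan is to track the expected velocity fluctuation energy $\Lambda(t) := \E\bigl(\frac{1}{N^2}\sum_{i,j}|\tilde V^i(t)-\tilde V^j(t)|^2\bigr)$ and show it satisfies a discrete Gr\"onwall-type inequality that, after the time rescaling $t\mapsto \frac{N}{p}t$, yields the stated exponential decay; the position bound then follows by integrating the velocity decay. First I would rewrite $\Lambda$ using the zero-momentum normalization $\sum_i \tilde V^i = 0$ (Proposition~\ref{lem:M}), so that $\frac{1}{N^2}\sum_{i,j}|\tilde V^i-\tilde V^j|^2 = \frac{2}{N}\sum_i |\tilde V^i|^2$; this reduces everything to controlling the total kinetic energy $E(t):=\sum_i|\tilde V^i(t)|^2$. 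On a step $[t_k,t_{k+1})$ with chosen batch $\mathcal C_k$, only the particles in $\mathcal C_k$ evolve, and for those the standard Cucker-Smale energy identity gives
\begin{equation*}
    \frac{d}{dt} E(t) = -\frac{1}{p-1}\sum_{i,j\in\mathcal C_k}\psi(|\tilde X^j-\tilde X^i|)\,|\tilde V^j-\tilde V^i|^2 \le -\frac{\psi_0}{p-1}\sum_{i,j\in\mathcal C_k}|\tilde V^j-\tilde V^i|^2,
\end{equation*}
using $\psi\ge\psi_0$. The key combinatorial step is then to take conditional expectation over the uniform choice of $\mathcal C_k$: since the batch is a uniform $p$-subset, $\E\bigl[\sum_{i,j\in\mathcal C_k}|\tilde V^j-\tilde V^i|^2 \mid \mathcal F_{k-1}\bigr] = \frac{p(p-1)}{N(N-1)}\sum_{i,j}|\tilde V^j-\tilde V^i|^2$, so the per-step expected dissipation is proportional to $\frac{2p}{N(N-1)}\,\E E(t)$ via the zero-momentum identity again.

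The subtlety — and what I expect to be the main obstacle — is that the differential inequality holds only for the (random) subset $\mathcal C_k$, so one cannot simply integrate a clean ODE; instead one must estimate $E(t_{k+1})$ in terms of $E(t_k)$. Over one step the dissipation rate itself changes because $\tilde V$ moves, so I would either (i) use Proposition~\ref{ineq: coarseD} ($\mathcal D_{\tilde V}(t)\le \mathcal D(V^{in})$, so all velocity differences stay bounded) together with a Duhamel/bootstrap argument to show the fluctuation energy is nearly constant across a single step up to $O(\tau)$ corrections, or (ii) more cleanly, derive a closed recursion of the form
\begin{equation*}
    \E E(t_{k+1}) \le \E E(t_k)\Bigl(1 - \tfrac{2p}{N(N-1)}\cdot\tfrac{\psi_0\tau}{1+\frac{2p}{p-1}\psi_0\tau}\Bigr),
\end{equation*}
where the denominator $1+\frac{2p}{p-1}\psi_0\tau$ is exactly what accounts for the intra-step decay of the velocity differences (this is where the constant $C_1$ in \eqref{ntt:C1C2} comes from — note $\frac{2p}{N(N-1)}\cdot\frac{\psi_0\tau}{1+\frac{2p}{p-1}\psi_0\tau} = \frac{p}{N(N-1)}C_1\tau$). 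Obtaining the sharp denominator rather than a crude $1-c\tau$ bound requires solving, on each step, a scalar Riccati/logistic-type comparison for the quantity $\sum_{i,j\in\mathcal C_k}|\tilde V^i-\tilde V^j|^2$ restricted to the active batch, using that within the batch $\sum_{i\in\mathcal C_k}\tilde V^i$ is also conserved (equation~\eqref{eq:LemM1}) so $\sum_{i,j\in\mathcal C_k}|\tilde V^i-\tilde V^j|^2 = 2p\sum_{i\in\mathcal C_k}|\tilde V^i - \bar V_{\mathcal C_k}|^2$ and the Cucker-Smale dissipation can be compared against this same quantity with the batch-size-$p$ normalization.

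Iterating the recursion over $k$ steps gives $\E E(t_k) \le E(0)\,\exp\!\bigl(-\frac{p}{N(N-1)}C_1\tau k\bigr)$; writing $t = t_k = k\tau$ in the original timescale and then substituting the effective time $\frac{N}{p}t$ (i.e.\ taking $k\tau \to \frac{N}{p}t$, equivalently running $\frac{N}{p\tau}\cdot t$ steps) converts the exponent into $-\frac{1}{N-1}C_1 t = -\frac{2N}{N-1}\cdot\frac{\psi_0}{1+\frac{2p}{p-1}\psi_0\tau}\cdot\frac{t}{N}$... — I need to be careful with the bookkeeping here so that the factor $N$ in $\frac{2N}{N-1}$ emerges correctly; the point is that each step dissipates a $\Theta(p/N)$ fraction of the energy in expectation, and $\frac{N}{p}$ steps therefore produce an $\Theta(1)$ decay, matching the continuous Cucker-Smale rate up to the $O(\tau)$ correction in the denominator. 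Finally, for part~1, from $\E|\tilde V^i(\frac{N}{p}t) - \tilde V^j(\frac{N}{p}t)|^2 \le \mathcal D(V^{in})^2 e^{-C_1 t \cdot N/(p\cdot\,\cdots)}$ (more precisely the decay just established, together with $|\tilde X^i(t)-\tilde X^j(t)| \le |X^{in,i}-X^{in,j}| + \int_0^t |\tilde V^i(s)-\tilde V^j(s)|\,ds$ on the RBM-r trajectory, noting the static particles contribute nothing), Jensen and the integrability in $t$ of $e^{-\frac{C_1}{2}t}$ give a finite bound $\tilde x_\infty$ depending only on $\psi$, $D(X^{in})$, $D(V^{in})$; one should double-check that the time change does not destroy this bound, but since the rescaled decay rate is still strictly positive and uniform in $N$, it does not.
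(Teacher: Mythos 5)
Your proposal follows essentially the same route as the paper's proof: reduce to the kinetic energy via the zero-momentum normalization, split the per-step evolution into the frozen particles and the active batch, exploit the conservation of the batch momentum together with the uniform-subset (hypergeometric) expectation $\E\,\mathbb{I}_{\{i,j\in\mathcal{C}_k\}}=\frac{p(p-1)}{N(N-1)}$, control the intra-batch fluctuation by the batch-level Cucker--Smale decay at rate $\frac{2p}{p-1}\psi_0$ (this is exactly Lemma~\ref{lem:basic flocking}), convert the resulting convex combination $a+(1-a)\e^{-x}$ into a clean exponential (the paper uses Lemma~\ref{lem:4.1} with $a=\frac{N-p}{N-1}$, $b=\frac{2p}{p-1}\psi_0\tau$ — this is precisely where your denominator $1+\frac{2p}{p-1}\psi_0\tau$ comes from), iterate, rescale time by $\frac{N}{p}$, and integrate for the position bound. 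All the structural ingredients are present and correct.

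The one concrete problem is the factor of $N$ that you flag but do not resolve, and it is a genuine error in the recursion as written, not just loose bookkeeping: your claimed per-step contraction $1-\frac{2p}{N(N-1)}\cdot\frac{\psi_0\tau}{1+\frac{2p}{p-1}\psi_0\tau}$ yields, after $\frac{N}{p}\cdot\frac{t}{\tau}$ steps, an exponent $-\frac{1}{N-1}C_1 t$, which degenerates as $N\to\infty$ and would not prove the theorem. The missing $N$ enters when you convert the dissipation back to $E(t)=\sum_i|\tilde{V}^i|^2$: under $\sum_i\tilde{V}^i=0$ one has $\sum_{i,j}|\tilde{V}^i-\tilde{V}^j|^2=2N\sum_i|\tilde{V}^i|^2$, not $2\sum_i|\tilde{V}^i|^2$, so the expected instantaneous dissipation at $t_k$ is
\begin{equation*}
    \frac{\psi_0}{p-1}\cdot\frac{p(p-1)}{N(N-1)}\sum_{i,j}|\tilde{V}^i-\tilde{V}^j|^2=\frac{\psi_0 p}{N(N-1)}\cdot 2N\,E(t_k)=\frac{2p\,\psi_0}{N-1}E(t_k),
\end{equation*}
i.e.\ the correct per-step rate (per unit time) is $\frac{p}{N-1}C_1$ with $C_1$ as in \eqref{ntt:C1C2}, and $\frac{N}{p}$ steps then produce exactly the stated $\frac{2N}{N-1}\frac{\psi_0}{1+\frac{2p}{p-1}\psi_0\tau}$. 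With that correction your recursion closes and the rest of the argument (induction, time change, and the Cauchy--Schwarz integration $\frac{d}{dt}\sqrt{\E\frac{1}{N^2}\sum_{i,j}|\tilde{X}^i-\tilde{X}^j|^2}\le\sqrt{\E\frac{1}{N^2}\sum_{i,j}|\tilde{V}^i-\tilde{V}^j|^2}$ for part~1) goes through as in the paper.
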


In our theorem, the decay rate is {\it independent} of $N$ and $p$. Similar with the proof technique in Theorem \ref{thm:flocking}, we can derive a flocking estimate of the RBM-1. This is a notable improvement from \cite[Theorem 3.1]{ha2021uniform}, where the decay rate of the RBM-1 is $\mathcal{O}(p/N)$ under the same assumption.

\begin{theorem}[Stochastic flocking dynamics of the RBM-1]\label{thm:flockingrbm1}
Suppose that the communication weight satisfies the assumption \eqref{ass:psi}, and let $(\tilde{X}_{(1)},\tilde{V}_{(1)})$ be the solutions to \eqref{eq: the RBM-1cs}. Then, there exists a positive constant $\tilde{x}^{(1)}_\infty= \tilde{x}^{(1)}_\infty(\psi, D(X^{in}), D(V^{in}))$ such that
\begin{enumerate}
    \item $\sup\limits_{t>0} \E \Big( \frac{1}{N^2}\sum\limits_{i,j}|\tilde{X}^i_{(1)}(t) - \tilde{X}^j_{(1)}(t)|^2\Big) < \tilde{x}^{(1)}_\infty$,\\
    \item $\E \frac{1}{N^2}\sum\limits_{i,j} |\tilde{V}_{(1)}^i(t)-\tilde{V}_{(1)}^j(t)|^2 \le  \frac{1}{N^2}\sum\limits_{i,j} |\tilde{V}_{(1)}^i(0)-\tilde{V}_{(1)}^j(0)|^2 \exp\Big(-\frac{2N}{N-1} \frac{\psi_0}{1+\frac{2p}{p-1}\psi_0\tau}t\Big).$
\end{enumerate}
\end{theorem}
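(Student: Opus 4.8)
\textbf{Proof proposal for Theorem \ref{thm:flockingrbm1}.}

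The plan is to mimic the argument for Theorem \ref{thm:flocking}, but working directly in physical time since RBM-1 does not require the time change $t\mapsto \frac{N}{p}t$. First I would introduce the fluctuation functional $\Lambda(t):=\frac{1}{N^2}\sum_{i,j}|\tilde X^i_{(1)}(t)-\tilde X^j_{(1)}(t)|^2$ and $\Theta(t):=\frac{1}{N^2}\sum_{i,j}|\tilde V^i_{(1)}(t)-\tilde V^j_{(1)}(t)|^2$. Because of the zero-mean normalization $\sum_i V^{in,i}=0$ and momentum conservation (Proposition \ref{lem:M} holds for RBM-1 by the same exchangeability argument within each batch $\mathcal C_q$), one has $\Theta(t)=\frac{2}{N}\sum_i|\tilde V^i_{(1)}(t)|^2$, which is the quantity I will actually estimate. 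The key point is to compute $\frac{d}{dt}\sum_i|\tilde V^i_{(1)}(t)|^2$ on a subinterval $[t_k,t_{k+1})$: the cross terms from particles in different batches vanish because the batches partition $\{1,\dots,N\}$, so
\begin{equation*}
    \frac{d}{dt}\sum_i|\tilde V^i_{(1)}(t)|^2 = -\sum_{q}\frac{1}{p-1}\sum_{i,j\in\mathcal C_q}\psi(|\tilde X^j_{(1)}-\tilde X^i_{(1)}|)\,|\tilde V^j_{(1)}-\tilde V^i_{(1)}|^2 \le -\frac{\psi_0}{p-1}\sum_q\sum_{i,j\in\mathcal C_q}|\tilde V^j_{(1)}-\tilde V^i_{(1)}|^2.
\end{equation*}

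Next I would take expectation conditioned on $\mathcal F_{k}$ (equivalently, average over the uniformly random partition $\xi_{k+1}$). Using exchangeability, the expected within-batch pair sum is a fixed fraction of the full pair sum: for a uniformly random partition into batches of size $p$, $\E\big[\sum_q\sum_{i,j\in\mathcal C_q}|\tilde V^j_{(1)}-\tilde V^i_{(1)}|^2\,\big|\,\mathcal F_k\big]=\frac{p-1}{N-1}\sum_{i,j}|\tilde V^j_{(1)}-\tilde V^i_{(1)}|^2$, exactly as in the RBM-r computation but with the combinatorial factor $\frac{p-1}{N-1}$ in place of the RBM-r one. This yields, in expectation, $\frac{d}{dt}\E\sum_i|\tilde V^i_{(1)}|^2 \le -\frac{\psi_0}{N-1}\E\sum_{i,j}|\tilde V^j_{(1)}-\tilde V^i_{(1)}|^2 = -\frac{2N\psi_0}{N-1}\,\E\sum_i|\tilde V^i_{(1)}|^2$ on each step, hence a clean Grönwall bound; the correction term $\frac{2p}{p-1}\psi_0\tau$ in the denominator $C_1$ arises exactly as in Theorem \ref{thm:flocking} from the $O(\tau)$ discrepancy between the instantaneous rate at $t$ and the rate one actually gets when integrating over a full step (controlled via Proposition \ref{ineq: coarseD}, which for RBM-1 is the original Lemma 2.4 of \cite{ha2021uniform}, giving $\mathcal D_{\tilde V}(t)\le \mathcal D(V^{in})$). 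Iterating over steps and chaining gives claim (2).

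For claim (1), the spatial diameter bound, I would use claim (2) together with $\frac{d}{dt}|\tilde X^i_{(1)}-\tilde X^j_{(1)}|^2 \le 2|\tilde X^i_{(1)}-\tilde X^j_{(1)}|\,|\tilde V^i_{(1)}-\tilde V^j_{(1)}|$, so that $\frac{d}{dt}\sqrt{\Lambda(t)} \le \sqrt{\Theta(t)}$ in an averaged sense (after passing to expectations and using Jensen/Cauchy–Schwarz to handle the square roots); integrating the exponentially decaying bound on $\E\Theta$ in $t$ from $0$ to $\infty$ gives a finite bound $\tilde x^{(1)}_\infty$ depending only on $\psi$, $D(X^{in})$, $D(V^{in})$, precisely because the decay rate $C_1$ is independent of $N$ and $p$ — this independence is the whole improvement over \cite[Theorem 3.1]{ha2021uniform}. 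The main obstacle is the careful handling of the combinatorial averaging identity for the within-batch pair-difference sum under the uniform random partition (as opposed to the single-batch selection in RBM-r), and making the square-root manipulation for part (1) rigorous at the level of expectations rather than pathwise; both are routine once set up but must be done with care to get the stated constant $\frac{2N}{N-1}\cdot\frac{\psi_0}{1+\frac{2p}{p-1}\psi_0\tau}$ exactly.
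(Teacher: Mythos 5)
Your overall architecture (within-batch momentum conservation, combinatorial averaging over the random partition, within-batch dissipation, then Lemma \ref{lem:4.1}) matches the paper's, but the central step of your velocity estimate has a genuine gap. You differentiate $\sum_i|\tilde V^i_{(1)}(t)|^2$ at an intermediate time $t\in(t_k,t_{k+1})$ and then invoke the identity $\E\bigl[\sum_q\sum_{i,j\in\mathcal C_q}|\tilde V^j_{(1)}(t)-\tilde V^i_{(1)}(t)|^2\mid\mathcal F_k\bigr]=\tfrac{p-1}{N-1}\sum_{i,j}|\tilde V^j_{(1)}(t)-\tilde V^i_{(1)}(t)|^2$. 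That identity is valid only at $t=t_k$, when the velocities are independent of the current partition; for $t>t_k$ the state has evolved under the batch dynamics and is correlated with the indicator $\mathbb{I}_{\{[i]_k=[j]_k\}}$, so the expectation does not factor. Moreover the correlation runs in the unfavorable direction: the within-batch pair differences are precisely the ones being dissipated on $[t_k,t)$, so at time $t$ the within-batch pair sum is \emph{smaller} than the fraction $\tfrac{p-1}{N-1}$ of the global pair sum, and your ``clean Gr\"onwall bound'' with instantaneous rate $\tfrac{2N\psi_0}{N-1}$ overestimates the dissipation. The correction $\tfrac{1}{1+\frac{2p}{p-1}\psi_0\tau}$ is therefore not an $O(\tau)$ technicality to be absorbed via Proposition \ref{ineq: coarseD} (which only gives a uniform bound on $\mathcal D_{\tilde V}$); it is forced by this within-step depletion of the within-batch fluctuations.

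The paper avoids the problem by never averaging over the partition at intermediate times. It writes $\sum_{i\in\xi_k(\ell)}|\tilde V^i_{(1)}(t)|^2=\tfrac1p\bigl|\sum_{i\in\xi_k(\ell)}\tilde V^i_{(1)}(t)\bigr|^2+\tfrac1{2p}\sum_{i,j\in\xi_k(\ell)}|\tilde V^i_{(1)}(t)-\tilde V^j_{(1)}(t)|^2$, pushes the first term back to $t_k$ by within-batch momentum conservation and the second back to $t_k$ by the deterministic within-batch flocking estimate of Lemma \ref{lem:basic flocking} (rate $e^{-\frac{2p}{p-1}\psi_0(t-t_k)}$), and only then applies the combinatorial identity, which is legitimate at $t_k$. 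This yields the convex combination $a+(1-a)e^{-\frac{2p}{p-1}\psi_0(t-t_k)}$ with $a=\tfrac1p\tfrac{N-p}{N-1}$, and Lemma \ref{lem:4.1} converts it into the single exponential with exactly the stated rate. Your route could be repaired with a two-sided within-batch Gr\"onwall bound (lower-bounding the within-batch pair sum at time $t$ by $e^{-\frac{2p}{p-1}\psi_M(t-t_k)}$ times its value at $t_k$), but that produces a rate involving $\psi_M$ rather than the stated constant. Your treatment of part (1) is fine and coincides with the paper's.
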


\begin{remark}\label{rmk:N=N}
    Recall $(X,V)$ is the solutions to \eqref{eq: cs}. By direct calculus similar with Lemma \ref{lem:basic flocking}, one can find that 
    \begin{equation*}
        \E \frac{1}{N^2}\sum\limits_{i,j} |V^i(t)-V^j(t)|^2 \le \frac{1}{N^2}\sum\limits_{i,j}|V^i(0)-V^j(0)|^2 \exp\left( -2\frac{N}{N-1}\psi_0 t\right).
    \end{equation*}
    For the decay coefficient of the exponential functions in Theorem \ref{thm:flocking} and Theorem \ref{thm:flockingrbm1}, we observe that
    \begin{equation*}
        \exp\left(-\frac{2N}{N-1} \frac{\psi_0}{1+\frac{2p}{p-1}\psi_0\tau}t\right) \to \exp\left( -2\frac{N}{N-1}\psi_0t\right), \text{ as } \tau \to 0.
    \end{equation*}
    In this sense, both the two RBM methods share the similar flocking dynamics with the original system \eqref{eq: cs}. 
\end{remark}
\begin{remark}
    The assumption of the lower bound $\psi_0$ is for the convenience of the proof. By the Lyapunov approach in \cite{ha2009simple}, this assumption can be relaxed if the coupling strength and the initial data $(X^{in},V^{in})$ satisfy
    \begin{equation*}
        D(V^{in}) < \frac{1}{2}\int^{\infty}_{D(X^{in})}\psi(s)ds.
    \end{equation*}
\end{remark}

\subsection{Uniform error estimate of the RBM-r}
We now present our second main result, which establishes an asymptotic error estimate between the RBM-r system \eqref{eq:rbmr} and the original Cucker-Smale system \eqref{eq: cs}.

\begin{theorem}[Uniform error estimate]\label{thm:main}
Suppose that the communication weight satisfies  assumption \eqref{ass:psi}, and let $(X,V),$ $(\tilde{X},\tilde{V})$ be the solutions to \eqref{eq: cs} and \eqref{eq:rbmr}, respectively. Then it holds 
    \begin{equation}\label{eq:thmerr}
        \E \frac{1}{N}\sum\limits_{i=1}^N\left|\tilde{V}^i(\frac{N}{p}t) - V^i(t)\right|^2 \le C\tau \left(1-\frac{p}{N}+\frac{1}{p-1}-\frac{1}{N-1}\right)\e^{-C_3 t} + C\tau^2 \e^{-\frac{C_3}{2} t},
    \end{equation}
    where $C$ is a constant depending on $\psi,$ $D(X^{in})$ and $D(V^{in})$, and $C_3$ is defined in \eqref{ntt:C1C2}.
\end{theorem}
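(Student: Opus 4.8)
The plan is to prove Theorem~\ref{thm:main} by the classical ``auxiliary-system sandwich'': estimate $\E\frac1N\sum_i|\tilde V^i(\tfrac Np t)-\hat V^{ii}(t)|^2$ and $\E\frac1N\sum_i|\hat V^{ii}(t)-V^i(t)|^2$ separately and combine by the triangle inequality in $L^2$. The second term should actually vanish — or be controlled trivially — because IPS$'$ consists of $N$ copies of the original Cucker--Smale system run on a random ``active time'' clock $t^{(i)}$ (see \eqref{ntt:ti}), so $\hat V^{\ell i}(t)=V^\ell(t^{(i)})$ along the diagonal $\ell=i$; one then needs to compare $V^i(t^{(i)})$ with $V^i(t)$, which by the displayed a~priori bound right after Proposition~\ref{lem:V2diff} costs $\psi_M^2(t-t^{(i)})^2 D(V^{in})\e^{-2\psi_0(t\wedge t^{(i)})}$, and $\E(t-t^{(i)})^2=O(\tau^2)$ by a renewal/martingale estimate on $\eta^i_t$ (the number of times index $i$ has been picked is a sum of i.i.d.\ Bernoulli$(p/N)$ choices, so $t^{(i)}\approx\frac pN t$ with fluctuations $O(\tau^2)$ in mean square after the time rescaling $t\mapsto\frac Np t$). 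This is where the $C\tau^2\e^{-\frac{C_3}{2}t}$ term is born.

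For the main term I would derive a Gr\"onwall-type differential inequality for $e(t):=\E\frac1N\sum_i|\tilde V^i(\tfrac Np t)-\hat V^{ii}(t)|^2$. On each sub-interval both $\tilde V$ and $\hat V^{\cdot i}$ evolve by the \emph{same} batch force structure restricted to $\mathcal C_k$ (RBM-r equation \eqref{eq:rbmr} versus IPS$'$ equation \eqref{eq:IPS'}), the only difference being the consistency error between the batch-averaged force $\frac1{p-1}\sum_{j\in\mathcal C_k}(\cdots)$ and the full mean-field force $\frac1{N-1}\sum_{j=1}^N(\cdots)$. Taking conditional expectation over the uniform random choice of $\mathcal C_k$ (with $i$ fixed in the batch), the bias vanishes by exchangeability and the variance of the batch force is $O\!\big(\var\big)\cdot\big(\frac1{p-1}-\frac1{N-1}\big)$ — this is exactly the ``chi'' quantity of \cite{jin2020random}, and it accounts for the combinatorial factor $\big(1-\frac pN+\frac1{p-1}-\frac1{N-1}\big)$ in \eqref{eq:thmerr} once one also weighs in the $p/N$ probability that a given index is active in a step. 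The dissipative part of the Cucker--Smale force (coercivity $\ge\psi_0$ after subtracting the conserved mean, via Proposition~\ref{lem:M}) produces the decay factor: on the rescaled clock the effective friction is $C_1=\frac{2\psi_0}{1+\frac{2p}{p-1}\psi_0\tau}$, exactly as in Theorem~\ref{thm:flocking}, and $C_3=\min\{C_1,2C_2\}$ accounts for the worse of the two contributions. Summing the per-step inequality $e(t_{k+1})\le(1-C_3\tau+o(\tau))e(t_k)+C\tau^2(\cdots)\e^{-C_3 t_k}$ and passing to the limit gives the stated bound.

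The key inputs I would invoke are: Proposition~\ref{ineq: coarseD} (uniform-in-time boundedness $\mathcal D_{\tilde V}(t)\le\mathcal D(V^{in})$ for RBM-r, so all nonlinear coefficients stay bounded), Theorem~\ref{thm:flocking} (exponential flocking of RBM-r, needed to get exponential rather than merely $O(\tau)$ decay — the consistency error is itself proportional to $\mathcal D_{\tilde V}^2$ which decays), Proposition~\ref{lem:V2diff} and its corollary (for the IPS$'$-vs-IPS comparison and the flocking of the original system), and momentum conservation (Proposition~\ref{lem:M}) to reduce quadratic forms to the coercive ones. The combinatorial/variance computation for the batch force I would state as an auxiliary lemma (the ``Section~\ref{sec:auxiliary lemmas}'' lemmas the introduction promises), matching the $\chi$-type identity $\E|\text{batch force}-\text{full force}|^2 = \big(\frac1{p-1}-\frac1{N-1}\big)\cdot\big(\text{second moment of }\psi(\cdot)(V^j-V^i)\big)$.

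The main obstacle, I expect, is handling the \emph{random time change} rigorously: $\tilde V^i(\tfrac Np t)$ and $\hat V^{ii}(t)$ are compared at wall-clock times that correspond to \emph{different, random} amounts of ``active'' evolution for different particles, so the Gr\"onwall argument cannot be run naively step-by-step on a common grid. The fix is to work on the active-time clock $t^{(i)}$ per particle and use the stopping times $\zeta^i_n$ together with the fact that $\{i\in\mathcal C_k\}$ are i.i.d.\ across $k$ to control, in $L^2$, the discrepancy $t-t^{(i)}$ and the number of ``desynchronized'' steps; combining this with the exponential decay (so late-time desynchronization is harmless) is the delicate part, and is precisely the place where the $\frac Np$ effective-time factor of RBM-r, flagged in the introduction, has to be tracked carefully rather than borrowed from the RBM-1 analysis.
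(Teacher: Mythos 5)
Your overall architecture matches the paper's: the same sandwich through the auxiliary system IPS$'$, the same identification of the batch-force variance as the source of the $\frac{1}{p-1}-\frac{1}{N-1}$ factor, and the same recognition that the random time change is the delicate point. However, there is a genuine quantitative error in your treatment of the IPS-versus-IPS$'$ comparison. The number $\eta^i_{\frac{N}{p}t}$ of steps in which index $i$ is active up to wall-clock time $\frac{N}{p}t$ is Binomial$(n,\frac{p}{N})$ with $n=\frac{N}{p}\frac{t}{\tau}$, so $\E\,|t^{(i)}-t|^2=\var(\eta^i\tau)=n\frac{p}{N}(1-\frac{p}{N})\tau^2=(1-\frac{p}{N})\,t\tau$, which is $O(\tau)$, \emph{not} $O(\tau^2)$ as you claim. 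This term is not negligible: it is precisely the origin of the $(1-\frac{p}{N})$ contribution inside the $C\tau(\cdots)$ bracket of \eqref{eq:thmerr}; the paper's Step~2 yields $\E|\hat V^{ii}(\frac{N}{p}t)-V^i(t)|^2\le C(1-\frac{p}{N})\e^{-2\psi_0 t}\tau$. Your accounting therefore does not add up: you attribute the $(1-\frac{p}{N})$ factor to ``the $p/N$ probability that a given index is active'' inside the batch-variance computation, and you claim the $C\tau^2\e^{-C_3t/2}$ term is born in the time-change step, whereas that step contributes the $O(\tau)(1-\frac{p}{N})$ piece and the $\tau^2$ piece actually arises in Step~1, from the within-step drift $\chi_{k,i}(\hat Z^{\cdot,i}(t))-\chi_{k,i}(\hat Z^{\cdot,i}(t_k))$ and the desynchronization terms $\hat V^{jj}-\hat V^{ji}$ (Lemma~\ref{lem:lem2} and Lemma~\ref{lem:ui=t}).

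A second gap is that your per-step inequality $e(t_{k+1})\le(1-C_3\tau+o(\tau))e(t_k)+C\tau^2(\cdots)\e^{-C_3t_k}$ cannot be obtained in a single pass. The coarse bound on the coupling functional $S(t)$ unavoidably contains cross terms such as $\E|\sum_{i\in\mathcal{C}_k}\hat V^{ii}(t)|^2$ and $\E\sum_{i,j\in\mathcal{C}_k}w_V^i\cdot w_V^j$, which at first sight are only $O(1)$ times an exponential rather than $O(\tau)$; a single Gr\"onwall argument thus delivers only the preliminary estimate of Proposition~\ref{thm:oldmain}, whose leading term is $O(1)$. The paper removes this via a three-stage bootstrap (first the $O(1)$ decay estimate, then two refinements passing through the intermediate order $\tau(\frac{1}{p-1}-\frac{1}{N-1})^{\frac12}$, then a final pass to fix the exponential rate), coupled through the Gr\"onwall-type system of Lemma~\ref{lem:gronwalltype} linking the position error $w_X$ and the velocity error $w_V$. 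Your sketch omits both the position-error coupling and the bootstrap, so as written it would not reach the stated $O(\tau)$ order.
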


\begin{remark}\label{rmk:C1 N/N-1}
    To achieve a decay rate independent of $N$, we define $C_1:= \frac{2\psi_0}{1+\frac{2p}{p-1}\psi_0 \tau}$ in \eqref{ntt:C1C2}. However, $C_1$ is actually derived from Theorem \ref{thm:flocking} and Theorem \ref{thm:flockingrbm1} (since $(N-1)/N \ge 1$) and thus, in the main text, it can be replaced by $\frac{N}{N-1} \frac{2\psi_0}{1+\frac{2p}{p-1}\psi_0 \tau}$.
\end{remark}

\begin{remark}\label{thm:errrbm1}
    Let $(\tilde{X}_{(1)}, \tilde{V}_{(1)})$ be the solutions to the RBM-1 (Algorithm \ref{algo: the RBM1}). Then, under the assumption of Theorem \ref{thm:main}, one can derive
    \begin{equation}\label{eq:thmerrrbm1}
        \E \frac{1}{N}\sum\limits_{i=1}^N\left|\tilde{V}^i_{(1)}(t) - V^i(t)\right|^2 \le C\tau \left(\frac{1}{p-1}-\frac{1}{N-1}\right)\e^{-C_1 t}+C\tau^2 \e^{-C_1 t},
    \end{equation} 
    where $C=C(\psi,D(X^{in}), D(V^{in}))$.
    Since the proof follows similarly (and simpler) with using Step 1 of Theorem \ref{thm:main}, we omit it in this paper. This represents a mild refinement of the earlier bound 
    \begin{equation*}
        \E \frac{1}{N}\sum\limits_{i=1}^N\left|\tilde{V}^i_{(1)}(t) - V^i(t)\right|^2 \le C \e^{-\psi_0 t} +C\tau^2 + C\tau \left(\frac{1}{p-1}-\frac{1}{N-1}\right),
    \end{equation*}
    in \cite[Theorem 3.2]{ha2021uniform} by Ha et al. The key improvement stems from a more precise decay analysis leveraging flocking estimates in Theorem \ref{thm:flockingrbm1}, rather than relying only on the coarse bound of $D(\tilde{V}_{(1)})$ similar as Proposition \ref{ineq: coarseD}.
\end{remark}

\begin{remark}
    Although the flocking dynamics of the RBM-1 and RBM-r are similar, the RBM-r is expected to perform comparably, though not as well as the RBM-1, in practice, since replacement reduces interaction uniformity. In other words, the RBM-r’s interactions are less averaged than those of the RBM-1, leading to accumulated errors over intermediate time periods. However, the RBM-r is easier to implement. A simple numerical demonstration of this is provided in Section \ref{sec:numerical} (Figure \ref{fig:rbm1}).
\end{remark}

\section{Proofs of the main results}\label{sec: pf of thms}
In the following, we derive the proofs of Theorem \ref{thm:flocking}, Theorem \ref{thm:flockingrbm1} and Theorem \ref{thm:main}. For better organization, we leave the auxillary lemmas in Section \ref{sec:auxiliary lemmas} and Section \ref{sec:subsecSR}.

\subsection{Proof of Theorem \ref{thm:flocking}}
We split the proof into two parts of velocity alignment and spatial cohension.
\paragraph{Emergence of velocity alignment}\label{subsec: pf of flockingrbmr}
Since $\frac{d}{dt}\sum\limits_i \tilde{V}^i (t) = 0$ and $\sum\limits_i \tilde{V}^i (0) = 0,$ one has that
\begin{align}
    \E \frac{1}{N^2}\sum\limits_{i,j} |\tilde{V}^i(t)-\tilde{V}^j(t)|^2 =& \frac{2}{N}\E \sum\limits_i |\tilde{V}^i (t)|^2 \notag\\
    =& \E\frac{2}{N}\sum\limits_{i\in \mathcal{C}_k} |\tilde{V}^i (t)|^2 + \E\frac{2}{N}\sum\limits_{i\notin \mathcal{C}_k}|\tilde{V}^i (t_k)|^2\notag\\
    =&\E\frac{2}{N}\sum\limits_{i\in \mathcal{C}_k} |\tilde{V}^i (t)|^2 + \E\frac{2}{N}\frac{N-p}{N}\sum\limits_i|\tilde{V}^i (t_k)|^2. \label{eq:thmrbmr1}
\end{align}
Note that
\begin{equation*}
    \E \sum\limits_{i\in\mathcal{C}_k}|\tilde{V}^i (t)|^2 = \E \frac{1}{p}|\sum\limits_{i\in\mathcal{C}_k}\tilde{V}^i (t)|^2 + \frac{1}{2p} \E\sum\limits_{i,j\in\mathcal{C}_k} |\tilde{V}^i(t) -\tilde{V}^j(t)|^2.
\end{equation*}
The key observation is that momentum of each subsystem is conserved by the RBM. By \eqref{eq:LemM1}, it holds that
\begin{equation*}
    \sum\limits_{i\in\mathcal{C}_k}\tilde{V}^i(t) = \sum\limits_{i\in\mathcal{C}_k}\tilde{V}^i(t_k).
\end{equation*}
Note that at $t=t_k$, the randomness of $\mathcal{C}_k$ is independent of $\tilde{V}(t_k)$. Then, 
\begin{align}
    \notag\E |\sum\limits_{i\in\mathcal{C}_k}\tilde{V}^i(t)|^2 &= \E |\sum\limits_{i\in\mathcal{C}_k}\tilde{V}^i(t_k)|^2\\
    \notag& = \E\sum\limits_{i,j}\tilde{V}^i(t_k)  \cdot \tilde{V}^j(t_k)\mathbb{I}_{i\in\mathcal{C}_k}\mathbb{I}_{j\in\mathcal{C}_k}\\
    \notag& = \frac{p}{N}\frac{p-1}{N-1}\E\sum\limits_{i,j;i\ne j}\tilde{V}^i(t_k) \cdot \tilde{V}^j(t_k) + \frac{p}{N}\E \sum\limits_i|\tilde{V}^i(t_k)|^2\\
    \notag& = \frac{p}{N}\frac{p-1}{N-1}\E\sum\limits_{i,j}\tilde{V}^i(t_k) \cdot  \tilde{V}^j(t_k) + \frac{p}{N}\frac{N-p}{N-1}\E \sum\limits_i|\tilde{V}^i(t_k)|^2\\
    & = \frac{p}{N}\frac{N-p}{N-1}\E \sum\limits_i|\tilde{V}^i(t_k)|^2.\label{eq:EsumVr}
\end{align}
Hence one obtains
\begin{equation*}
    \E \sum\limits_{i\in\mathcal{C}_k}|\tilde{V}^i (t)|^2 = \frac{1}{N}\frac{N-p}{N-1}\E \sum\limits_i|\tilde{V}^i(t_k)|^2 + \frac{1}{2p}\E \sum\limits_{i,j\in\mathcal{C}_k} |\tilde{V}^i(t) - \tilde{V}^j(t)|^2.
\end{equation*}
Then, \eqref{eq:thmrbmr1} can be written as 
\begin{align}
    \notag\E &\frac{1}{N^2}\sum\limits_{i,j} |\tilde{V}^i(t)-\tilde{V}^j(t)|^2 \\=& \frac{2}{N}\frac{1}{N}\frac{N-p}{N-1}\E \sum\limits_{i}|\tilde{V}^i(t_k)|^2 + \frac{2}{N}\frac{1}{2p} \E\sum\limits_{i,j\in\mathcal{C}_k} |\tilde{V}^i (t)-\tilde{V}^j(t)|^2 + \frac{2}{N}\frac{N-p}{N}\E \sum\limits_i |\tilde{V}^i(t_k)|^2\\
    \notag=&  \frac{2}{N}\frac{N-p}{N-1}\E \sum\limits_i |\tilde{V}^i(t_k)|^2 + \frac{2}{N}\frac{1}{2p}\E\sum\limits_{i,j\in\mathcal{C}_k}|\tilde{V}^i(t) - \tilde{V}^j(t)|^2\\
    \le& \left(\frac{N-p}{N-1} + \frac{p-1}{N-1}\exp\left(-\frac{2p}{p-1}\psi_0(t-t_k)\right)\right) \frac{1}{N^2}\E \sum\limits_{i,j}|\tilde{V}^i(t_k)-\tilde{V}^j(t_k)|^2,\label{eq:thmrbmr2}
\end{align}
where the last inequality is derived by Lemma \ref{lem:basic flocking}.
Now we use Lemma \ref{lem:4.1} to simplify the decay rate. 
Taking $$a= \frac{N-p}{N-1}, \quad b = \frac{2p}{p-1}\psi_0 \tau, \quad x = \frac{2p}{p-1}\psi_0(t-t_k) ,$$
in Lemma \ref{lem:4.1}, then it holds that
\begin{equation*}
     \E \frac{1}{N^2}\sum\limits_{i,j} |\tilde{V}^i(t)-\tilde{V}^j(t)|^2 \le  \E \frac{1}{N^2}\sum\limits_{i,j} |\tilde{V}^i(t_k)-\tilde{V}^j(t_k)|^2 \exp\left( -\frac{2p}{N-1} \frac{\psi_0}{1+\frac{2p}{p-1}\psi_0\tau}(t-t_k)\right).
\end{equation*}
By induction on $k$, we get
\begin{equation*}
    \E \frac{1}{N^2}\sum\limits_{i,j} |\tilde{V}^i(t)-\tilde{V}^j(t)|^2 \le C\exp\left(-\frac{2p}{N-1} \frac{\psi_0}{1+\frac{2p}{p-1}\psi_0\tau}t\right).
\end{equation*}
Therefore, we have
\begin{equation*}
    \E \frac{1}{N^2}\sum\limits_{i,j} |\tilde{V}^i(\frac{N}{p}t)-\tilde{V}^j(\frac{N}{p}t)|^2 \le C\exp\left(-\frac{2N}{N-1} \frac{\psi_0}{1+\frac{2p}{p-1}\psi_0\tau}t\right).
\end{equation*}

\paragraph{Uniform spatial cohesion}
From the Cauchy-Schwarz inequality,
\begin{equation*}
    \frac{d}{dt}\left(\E\frac{1}{N^2}\sum\limits_{i,j}|\tilde{X}^i -\tilde{X}^j|^2 \right)\le 2 \sqrt{\E\frac{1}{N^2}\sum\limits_{i,j}|\tilde{X}^i -\tilde{X}^j|^2 }\sqrt{\E\frac{1}{N^2}\sum\limits_{i,j}|\tilde{V}^i -\tilde{V}^j|^2 },
\end{equation*}
so that
\begin{equation*}
    \frac{d}{dt}\sqrt{\E\frac{1}{N^2}\sum\limits_{i,j}|\tilde{X}^i -\tilde{X}^j|^2} \le \sqrt{\E\frac{1}{N^2}\sum\limits_{i,j}|\tilde{V}^i -\tilde{V}^j|^2}.
\end{equation*}
Hence the derivative of the position norm is bounded by the velocity norm. Then, by integrating both sides with respect to time and take an expectation, one gets
\begin{multline*}
    \sqrt{\E\frac{1}{N^2}\sum\limits_{i,j}|\tilde{X}^i(t) -\tilde{X}^j(t)|^2}  \le \\\sqrt{\frac{1}{N^2}\sum\limits_{i,j}|\tilde{X}^i(0) -\tilde{X}^j(0)|^2}  + C\int_0^\infty \exp\left(-\frac{2N}{N-1} \frac{\psi_0}{1+\frac{2p}{p-1}\psi_0\tau}t\right) dt \le C,
\end{multline*}
where $C$ is a positive constant depending on $\psi,$ $D(X^{in})$ and $D(V^{in})$. This completes the second part of Theorem \ref{thm:flocking}.

\subsection{Proof of Theorem \ref{thm:flockingrbm1}}
The proof is similar with the approach presented in the above subsection. We briefly demonstrate the emergence of velocity alignment of the RBM-1 here. The uniform spital cohesion proof is totally same.

Recall Algorithm \ref{algo: the RBM1} and the batch division $\xi_k=(\xi_k(1),\cdots,\xi_k(\frac{N}{p}))$ at $t\in[t_k,t_{k+1})$. Similar with Proposition \ref{lem:M}, for any batch $\xi_k(\ell)$, we have
\begin{equation*}
    \sum\limits_i \tilde{V}^i_{(1)}(t) = \sum\limits_i \tilde{V}^{in,i}(t) = 0,\quad \sum\limits_{i\in\xi_k(\ell)}\tilde{V}^i_{(1)}(t)=\sum\limits_{i\in\xi_k(\ell)}\tilde{V}^i_{(1)}(t_k),
\end{equation*}
as well. Set $[i]_k \in\xi_k$ to be the batch containing $i$.
By the independence of $\xi_k$ of $\tilde{V}_{(1)}(t_k)$, it holds that
\begin{align*}
    \E \sum\limits_{\ell =1}^{N/p}|\sum\limits_{i\in\xi_k(\ell)}\tilde{V}^i_{(1)}(t)|^2 &= \E \sum\limits_{\ell =1}^{N/p}|\sum\limits_{i\in\xi_k(\ell)}\tilde{V}^i_{(1)}(t_k)|^2\\
    & = \E\sum\limits_{i,j}\tilde{V}^i_{(1)}(t_k) \cdot\tilde{V}^j_{(1)}(t_k)\mathbb{I}_{\{[i]_k = [j]_k\}}\\
    & = \frac{N-p}{N-1}\E \sum\limits_i|\tilde{V}^i_{(1)}(t_k)|^2.
\end{align*}
Note that 
\begin{equation*}
    \sum\limits_{i\in\xi_k(\ell)}|\tilde{V}^i_{(1)}(t)|^2 = \frac{1}{p}|\sum\limits_{i\in\xi_k(\ell)}\tilde{V}^i_{(1)}(t)|^2 + \frac{1}{2p}\sum\limits_{i,j\in\xi_k(\ell)}|\tilde{V}^i_{(1)}(t)-\tilde{V}^j_{(1)}(t)|^2.
\end{equation*}
Hence one obtains
\begin{align}
    \notag\E &\frac{1}{N^2}\sum\limits_{i,j} |\tilde{V}^i_{(1)}(t)-\tilde{V}^j_{(1)}(t)|^2 
    =\frac{2}{N}\E \sum\limits_i |\tilde{V}^i_{(1)}(t)|^2 = \frac{2}{N}\sum\limits_{\ell=1}^{N/p}\E\sum\limits_{i\in\xi_k(\ell)}|\tilde{V}^i_{(1)}(t)|^2
    \\=& \frac{2}{N}\sum\limits_{\ell=1}^{N/p}\E\frac{1}{N}\frac{N-p}{N-1}\sum\limits_i|\tilde{V}^i_{(1)}(t_k)|^2 + 
    \frac{2}{N}\sum\limits_{\ell=1}^{N/p}\E\frac{1}{2p}\sum\limits_{i,j\in\xi_k(\ell)}|\tilde{V}^i_{(1)}(t)-\tilde{V}^j_{(1)}(t)|^2\\
    \le& \left(\frac{1}{p}\frac{N-p}{N-1} + \frac{p-1}{p}\frac{N}{N-1}\exp\left(-\frac{2p}{p-1}\psi_0(t-t_k)\right)\right) \frac{1}{N^2}\E \sum\limits_{i,j}|\tilde{V}^i(t_k)-\tilde{V}^j(t_k)|^2,\notag
\end{align}
where the last inequality is derived by the similar method mentioned in Theorem \ref{thm:flocking}. Lemma \ref{lem:4.1} and the induction of $k$ leads to the second item of Theorem \ref{thm:flockingrbm1}.

\subsection{Proof of Theorem \ref{thm:main}.}\label{subsec: pf of thmerr}
 
In this section, we give the proof of Theorem \ref{thm:main}. Consider for all $t\in [0,\infty).$ Without loss of generality, we assume that $\tau$ divides $t$ and $p$ divides $N$ for simplicity. To estimate $\E\left|\tilde{V}^i(\frac{N}{p}t) - V^i(t)\right|^2,$ we introduce the auxiliary system IPS' \eqref{eq:IPS'} and divide our analysis into three steps.
\begin{itemize}
    \item (Step 1.) Compare RBM-r and IPS': estimate $\E |\hat{V}^{ii}(\frac{N}{p}t)-\tilde{V}^i (\frac{N}{p}t)|^2$.
    \item (Step 2.) Compare IPS and IPS': estimate $\E |\hat{V}^{ii}(\frac{N}{p}t)-V^i (t)|^2$.
    \item (Step 3.) Combine the results of the above steps. 
\end{itemize}
For better organization, the related auxiliary lemmas are presented in Section \ref{subsec:aux lmm}. 
\paragraph{Step 1.}
For estimating $\E |\hat{V}^{ii}(\frac{N}{p}t)-\tilde{V}^i (\frac{N}{p}t)|^2$, we first introduce the discrepancies between the IPS' and the approximate system RBM-r
\begin{equation*}
    w_X^i (t): = \tilde{X}^i(t) - \hat{X}^{ii}(t), \quad w_V^i (t): = \tilde{V}^i(t) - \hat{V}^{ii}(t),
\end{equation*}
and consider the dynamics of $w_V^i(t).$ Clearly if $i \notin \mathcal{C}_k,$ then $dw_V^i(t)=0.$ We only need to consider the case of $i\in\mathcal{C}_k.$ Define the random variable
\begin{equation}\label{ntt:chi}
    \chi_{k,i}(Z) : = \frac{1}{p-1} \sum\limits_{j\in\mathcal{C}_k} \psi (|X^j - X^i|)(V^j-V^i) - \frac{1}{N-1}\sum\limits_{j=1}^N \psi (|X^j - X^i|)(V^j-V^i).
\end{equation}
Then one has that, if $i \in \mathcal{C}_k,$ 
\begin{align*}
    dw_V^i(t) =& \chi_{k,i}(\hat{Z}^{\cdot,i})dt \\
    &+ \Big[\frac{1}{p-1}\sum\limits_{j\in\mathcal{C}_k} \psi(|\tilde{X}^j - \tilde{X}^i|)(\tilde{V}^j - \tilde{V}^i)
    -\frac{1}{p-1}\sum\limits_{j\in\mathcal{C}_k} \psi(|\hat{X}^{ji} - \hat{X}^{ii}|)(\hat{V}^{ji} - \hat{V}^{ii})\Big]dt,
\end{align*}
where
\begin{equation*}
        \chi_{k,i}(\hat{Z}^{\cdot,i}):=\frac{1}{p-1} \sum\limits_{j\in\mathcal{C}_k} \psi (|\hat{X}^{ji} - \hat{X}^{ii}|)(\hat{V}^{ji} - \hat{V}^{ii}) - \frac{1}{N-1}\sum\limits_{j=1}^N \psi (|\hat{X}^{ji} - \hat{X}^{ii}|)(\hat{V}^{ji} - \hat{V}^{ii}).
\end{equation*}
Now we consider the time derivative of the averaged square error
\begin{align}
    \notag\frac{d}{dt}&\frac{1}{N} \E \sum\limits_i |w_V^i (t) |^2
    =\frac{d}{dt}\frac{1}{N} \E \sum\limits_{i\in\mathcal{C}_k} |w_V^i (t) |^2\\
    \notag=&\frac{1}{N}\E \sum\limits_{i,j\in \mathcal{C}_k} \frac{2}{p-1}w_V^i(t)\cdot  \Big[ \psi(|\tilde{X}^j - \tilde{X}^i|)(\tilde{V}^j - \tilde{V}^i)
    - \psi(|\hat{X}^{ji} - \hat{X}^{ii}|)(\hat{V}^{ji} - \hat{V}^{ii})\Big]dt\\
    \notag&+ \frac{1}{N}\E \sum\limits_{i\in \mathcal{C}_k} 2w_V^i(t)\cdot \chi_{k,i}(\tilde{Z})dt\\
    =&:S(t)+R(t).\label{eqntt:SR cS}
\end{align}

To estimate \eqref{eqntt:SR cS}, we employ a three-stage bootstrapping argument:
\begin{itemize}
    \item Step 1(a): Establish a preliminary exponential decay estimate.
    \item Step 1(b): Refine this to obtain a bound of order $\big(\frac{1}{p-1} - \frac{1}{N-1}\big)\tau$.
    \item Step 1(c): Derive an explicit exponential decay rate.
\end{itemize}

\paragraph{Step 1(a).} Using the flocking estimate established in Theorem \ref{thm:flocking}, we prove Proposition \ref{thm:oldmain}, which demonstrates the exponential decay of the error.
\begin{proposition}\label{thm:oldmain}
Under the assumption of Theorem \ref{thm:main}, one has
    \begin{equation*}
        \frac{1}{N}\E\sum\limits_{i=1}^N|w_V^i(t)|^2 
    \le  C \e^{-C_3 \frac{p}{N}t}+  C\tau\left(\frac{1}{p-1}-\frac{1}{N-1}\right) \e^{-\frac{p}{N}C_3 t}.
    \end{equation*}
\end{proposition}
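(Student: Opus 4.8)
\textbf{Proof proposal for Proposition \ref{thm:oldmain}.}

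The plan is to estimate the two terms $S(t)$ and $R(t)$ in \eqref{eqntt:SR cS} separately and then close a Gr\"onwall-type inequality for $u(t) := \frac{1}{N}\E\sum_i |w_V^i(t)|^2$. For $S(t)$, the idea is the standard coercivity/dissipation trick: the bracket $\psi(|\tilde{X}^j-\tilde{X}^i|)(\tilde{V}^j-\tilde{V}^i) - \psi(|\hat{X}^{ji}-\hat{X}^{ii}|)(\hat{V}^{ji}-\hat{V}^{ii})$ splits into a ``good'' part $\psi(|\tilde{X}^j-\tilde{X}^i|)(w_V^j - w_V^i)$ which, after symmetrizing in $i,j\in\mathcal{C}_k$ and using $\psi\ge\psi_0$, produces a negative definite contribution $\le -\frac{2p}{p-1}\psi_0\cdot(\text{something comparable to }u(t))$ on the active batch (mirroring the computation behind Lemma \ref{lem:basic flocking}), plus an error part coming from the difference of the $\psi$-weights, controlled by $\|\psi\|_{\mathrm{Lip}}$ times $|w_X^i|+|w_X^j|$ times the bounded velocity differences (bounded via Proposition \ref{ineq: coarseD}). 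The growth of $|w_X^i|$ is in turn controlled since $\partial_t w_X^i = w_V^i$, so $|w_X^i(t)|$ over a single step $[t_k,t_{k+1})$ is $\mathcal{O}(\tau)$ relative to the $w_V$ scale; this feeds back only at order $\tau$ and is absorbed. For $R(t)$, the term $\E\sum_{i\in\mathcal{C}_k} 2w_V^i(t)\cdot\chi_{k,i}(\tilde{Z})\,dt$ is handled by Young's inequality, $2w_V^i\cdot\chi_{k,i} \le \varepsilon |w_V^i|^2 + \varepsilon^{-1}|\chi_{k,i}|^2$, where the consistency/variance bound $\E|\chi_{k,i}(\tilde Z)|^2 \lesssim \big(\tfrac{1}{p-1}-\tfrac{1}{N-1}\big)\mathcal{D}_{\tilde V}^2 \lesssim \big(\tfrac{1}{p-1}-\tfrac{1}{N-1}\big)D(V^{in})^2\e^{-c t}$ comes from the batch-sampling variance (a conditional-expectation computation like the one producing \eqref{eq:EsumVr}) combined with the flocking decay of $\mathcal{D}_{\tilde V}$ from Theorem \ref{thm:flocking}; I expect this to be packaged as one of the auxiliary lemmas in Section \ref{subsec:aux lmm}.

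Putting these together, on each interval $[t_k,t_{k+1})$ one obtains a differential inequality of the form
\begin{equation*}
    \frac{d}{dt} u(t) \le -c_1 u(t) + c_2 \tau\Big(\tfrac{1}{p-1}-\tfrac{1}{N-1}\Big)\e^{-c_3 t} + (\text{lower order in }\tau),
\end{equation*}
but with the crucial caveat that the dissipation $-c_1 u$ is only active on the fraction $p/N$ of indices that lie in $\mathcal{C}_k$ at any given time; integrating the batch-selection randomness (as in the passage from \eqref{eq:thmrbmr1} to \eqref{eq:thmrbmr2}) effectively rescales the decay rate by $p/N$, which is exactly why the exponents $\e^{-C_3\frac{p}{N}t}$ and $\e^{-\frac{p}{N}C_3 t}$ appear in the statement rather than $\e^{-C_3 t}$. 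Iterating the one-step estimate over $k$ via induction (using $u(0)=0$ since the triple is initialized identically) then yields the claimed bound, with the first term $C\e^{-C_3\frac pN t}$ absorbing the accumulated $\tau$-scale spatial/velocity drift and the $\chi$-variance term contributing $C\tau\big(\tfrac1{p-1}-\tfrac1{N-1}\big)\e^{-\frac pN C_3 t}$.

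The main obstacle, I expect, is keeping the bookkeeping of the ``only-$p/N$-of-the-system-moves'' structure honest while closing the Gr\"onwall argument: because $w_V^i$ is frozen when $i\notin\mathcal{C}_k$, the decay is not uniform across components, so one cannot simply apply a scalar Gr\"onwall to $u(t)$ on $[0,\infty)$. One must either work step-by-step and take the batch expectation at each $t_k$ (tracking that the conditional law of $\mathcal{C}_k$ is independent of $\mathcal{F}_{k-1}$, just as in \eqref{eq:EsumVr}), or pass to the time-changed IPS' picture where the effective running time of index $i$ is $t^{(i)}$ as in \eqref{ntt:ti}; either way the delicate point is that the Lipschitz-in-$\psi$ error terms and the $\chi$-variance terms must be shown to decay at a rate compatible with $p/N$ times $C_3$, not merely $C_3$, so that the induction on $k$ actually telescopes to the stated global-in-time bound. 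The subsequent Steps 1(b) and 1(c) will then bootstrap this preliminary estimate — removing the spurious $p/N$ in the exponent and sharpening the $\tau$ prefactor — but Step 1(a) only needs this cruder version.
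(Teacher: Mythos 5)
Your overall architecture matches the paper's Step 1(a): decompose the error derivative into $S(t)+R(t)$, extract dissipation from the lower bound $\psi_0$, bound the batch-sampling fluctuation $\chi_{k,i}$, and close a step-by-step Gr\"onwall argument using the independence of $\mathcal{C}_k$ from the state at $t_k$ to convert the batch sum into $\frac{p}{N}$ times the full sum (which is exactly how $p/N$ enters the exponent). However, two ingredients the paper needs are missing, and at least one is essential. First, your ``good part'' of the bracket in $S(t)$ is not just $\psi(|\tilde X^j-\tilde X^i|)(w_V^j-w_V^i)$: since $\tilde V^j-\hat V^{ji} = w_V^j + (\hat V^{jj}-\hat V^{ji})$, the decomposition necessarily produces the additional time-change mismatch $\hat V^{jj}-\hat V^{ji}$ (and its spatial analogue inside the $\psi$-difference), which records that in IPS' the copies indexed by $i$ and by $j$ have been run for different random effective times $t^{(i)}\neq t^{(j)}$. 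This term has no counterpart in the RBM-1 analysis and is controlled in the paper by the binomial/stopping-time estimates of Lemma \ref{lem:AGB} and Lemma \ref{lem:ui=t}; without it the estimate of $S(t)$ (Lemma \ref{lem:S}) cannot be closed. Second, the symmetrized dissipation gives $\sum_{i,j\in\mathcal{C}_k}|w_V^j-w_V^i|^2 = 2p\sum_{i\in\mathcal{C}_k}|w_V^i|^2-2|\sum_{i\in\mathcal{C}_k}w_V^i|^2$, which is \emph{not} coercive in $\sum_{i\in\mathcal{C}_k}|w_V^i|^2$ until you separately bound the batch-momentum term $\E|\sum_{i\in\mathcal{C}_k}w_V^i|^2$; the paper does this via the conservation $\sum_{i\in\mathcal{C}_k}\tilde V^i(t)=\sum_{i\in\mathcal{C}_k}\tilde V^i(t_k)$, the computation \eqref{eq:EsumVr}, and again Lemma \ref{lem:ui=t}, and this is precisely the origin of the $\tau$-independent first term $C\e^{-C_3\frac{p}{N}t}$ in the proposition (not ``accumulated $\tau$-scale drift'' as you suggest).

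Two smaller points. Your treatment of $R(t)$ by plain Young's inequality $2w_V^i\cdot\chi_{k,i}\le\varepsilon|w_V^i|^2+\varepsilon^{-1}|\chi_{k,i}|^2$ does not produce the factor $\tau$ that you then write in your differential inequality: the variance bound is $O\big(\tfrac1{p-1}-\tfrac1{N-1}\big)$ with no $\tau$, and the $\tau$ in the paper comes from the martingale cancellation $\E[w_V^i(t_k)\cdot\chi_{k,i}(\hat Z^{\cdot,i}(t_k))\mid i\in\mathcal{C}_k]=0$ (Lemma \ref{lem:lem2}) combined with the $O(\tau)$ increments of $w_V$ and $\chi$ over $[t_k,t]$ (Lemma \ref{lem:R}). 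For this coarse proposition your weaker bound still suffices because the $O(1)$ first term dominates, but the sharper decomposition is what the bootstrap in Steps 1(b)--(c) relies on. Finally, your control of the $\psi$-weight error through $\|\psi\|_{\mathrm{Lip}}|w_X^i|$ conflates the one-step increment of $w_X^i$ (which is $O(\tau)$) with $w_X^i(t)$ itself, which accumulates over all previous steps and is not yet controlled at this stage; the paper avoids this issue in the coarse estimate by bounding the weight difference simply by $2\psi_M$ and using the exponential decay of $\hat V^{ji}-\hat V^{ii}$, deferring the Lipschitz/$w_X$ argument to the refined Lemma \ref{lem:S'} where $\E\sum_{i\in\mathcal{C}_k}|w_X^i|^2$ is actually available.
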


\begin{proof}
We first coarsely estimate $S(t)$ and $R(t)$ in Lemma \ref{lem:S} and Lemma \ref{lem:R} respectively. 
Combining Young's inequality, one has that for $t\in[t_k,t_{k+1})$ and $i\in \mathcal{C}_k$,
    \begin{align}
        \label{eq:pf1step1.1}\frac{d}{dt}\frac{1}{N}\E\sum\limits_{i\in \mathcal{C}_k}|w_V^i(t)|^2 =&S(t) +  R(t)\\
        \le & -C\frac{1}{N}\E\sum\limits_{i\in\mathcal{C}_k}|w_V^i(t)|^2 \\
        &+ C \frac{p}{N}\e^{-C_3 \frac{p}{N}t} + C\tau \frac{p}{N} \left(\frac{1}{p-1}-\frac{1}{N-1}\right) \e^{-\frac{p}{N}C_3 t}.
    \end{align}
By Gr\"onwall's inequality, one has
\begin{equation}\label{eq:pf1step1.3}
    \frac{1}{N}\E\sum\limits_{i\in \mathcal{C}_k}|w_V^i(t)|^2 \le C \frac{p}{N}\e^{-C_3 \frac{p}{N}t} + C\tau \frac{p}{N} \left(\frac{1}{p-1}-\frac{1}{N-1}\right) \e^{-\frac{p}{N}C_3 t}.
\end{equation}
Take $t=t_k$ to get 
\begin{align}
    \label{eq:pf1step1.4}\frac{1}{N}\E\sum\limits_{i=1}^N|w_V^i(t_k)|^2 =& \frac{N}{p}\frac{1}{N} \E\sum\limits_{i\in\mathcal{C}_k}|w_V^i(t_k)|^2\\
    \notag\le & C \e^{-C_3 \frac{p}{N}t} + C\tau\left(\frac{1}{p-1}-\frac{1}{N-1}\right) \e^{-\frac{p}{N}C_3 t}.
\end{align}
Combining \eqref{eq:pf1step1.3} and \eqref{eq:pf1step1.4}, it holds that
\begin{align}
    \notag\frac{1}{N}&\E\sum\limits_{i=1}^N|w_V^i(t)|^2 =  \frac{1}{N}\E\sum\limits_{i\notin \mathcal{C}_k}|w_V^i(t_k)|^2 + \frac{1}{N}\E\sum\limits_{i\in \mathcal{C}_k}|w_V^i(t)|^2\\
    \notag\le & \frac{N-p}{N^2}\E\sum\limits_{i=1}^N|w_V^i(t_k)|^2 +   C \frac{p}{N}\e^{-C_3 \frac{p}{N}t} + C\tau \frac{p}{N} \left(\frac{1}{p-1}-\frac{1}{N-1}\right) \e^{-\frac{p}{N}C_3 t}\\
    \le &  C \e^{-C_3 \frac{p}{N}t} + C\tau\left(\frac{1}{p-1}-\frac{1}{N-1}\right) \e^{-\frac{p}{N}C_3 t}.\label{eq:pf1step1.5}
\end{align}
\end{proof}

\paragraph{Step 1(b).}
In this step, we perform a double bootstrapping argument to refine the estimate  on the order form $\mathcal{O}(1)$ to $\mathcal{O}\left(\tau \left(\frac{1}{p-1}-\frac{1}{N-1}\right)\right)$. 
\begin{itemize}
    \item Building on Proposition \ref{thm:oldmain}, we leverage the exponential decay of the velocity square difference to relax the estimate for $S(t)$, now accounting for the spatial square difference. Applying the Gr\"onwall-type inequality (Lemma \ref{lem:gronwalltype}) then yields an intermediate estimate of order $\mathcal{O}\left(\tau \left(\frac{1}{p-1}-\frac{1}{N-1}\right)^{\frac{1}{2}}\right)$.
    \item We repeat the bootstrapping procedure to further sharpen the estimate, ultimately achieving the desired order $\mathcal{O}\left(\tau \left(\frac{1}{p-1}-\frac{1}{N-1}\right)\right)$.
\end{itemize}

In detail, by Proposition \ref{thm:oldmain}, we can refine the result of Lemma \ref{lem:S} to \eqref{eq:lem:S'} in Lemma \ref{lem:S'}. Then, combining \eqref{eq:lem:S'} and Lemma \ref{lem:R}, one has that
\begin{align}
    \notag S(t)+R(t)\le & \frac{1}{(p-1)N}(-2p + \frac{N-p}{N-1})\E\sum\limits_{i\in\mathcal{C}_k}|w_V^i(t)|^2 \\
    \notag &+ C\frac{p^{\frac{3}{2}}}{(p-1)N}(\tau^2 \e^{-C_2 \frac{p}{N}t})^{\frac{1}{2}}\left(\E\sum\limits_{i\in\mathcal{C}_k}|w_V^i(t)|^2\right)^{\frac{1}{2}} \\
    \notag &+  \frac{C}{N}\e^{-\frac{C_3}{2}\frac{p}{N}t}\left(\E\sum\limits_{i\in\mathcal{C}_k}|w_X^i(t)|^2\right)^{\frac{1}{2}} \left(\E\sum\limits_{i\in\mathcal{C}_k}|w_V^i(t)|^2\right)^{\frac{1}{2}} \\
        \notag &+ C\frac{p}{N}\tau \left(\frac{1}{p-1} - \frac{1}{N-1}\right)^{\frac{1}{2}}\e^{-C_3 \frac{p}{N}t} + C\frac{p}{N}\tau^2 \e^{-C_3 \frac{p}{N}t}\\
        \le & -\frac{C}{N}\E\sum\limits_{i\in\mathcal{C}_k}|w_V^i(t)|^2 +\frac{C}{N}\E\sum\limits_{i\in\mathcal{C}_k}|w_X^i(t)|^2\e^{-C_3 \frac{p}{N}t}\\& + C \frac{p}{N}\tau^2 \e^{-C_2 \frac{p}{N}t} +  C \frac{p}{N}\left(\frac{1}{p-1} - \frac{1}{N-1}\right)^{\frac{1}{2}}\tau\e^{-C_3 \frac{p}{N}t},\label{eq:WW1.0}
\end{align}
where the last inequality is derived by Young's inequality.
Then by Lemma \ref{lem:gronwalltype}, it holds
\begin{equation}\label{eq:WW1}
\begin{aligned}
    &\frac{1}{N}\E\sum\limits_{i\in\mathcal{C}_k}|w_V^i(t)|^2\le C \frac{p}{N}\tau^2 \e^{-C \frac{p}{N}t} +  C \frac{p}{N}\left(\frac{1}{p-1} - \frac{1}{N-1}\right)^{\frac{1}{2}}\tau\e^{-C \frac{p}{N}t},
    \\&\frac{1}{N}\E\sum\limits_{i\in\mathcal{C}_k}|w_X^i(t)|^2\le C \frac{p}{N}\left(\frac{1}{p-1} - \frac{1}{N-1}\right)^{\frac{1}{2}}\tau + C \frac{p}{N}\tau^2 ,
\end{aligned}
\end{equation}
for some constant $C =C(\psi,D(X^{in}),D(V^{in})).$

Again, using \eqref{eq:WW1}, we refine Lemma \ref{lem:R} to Lemma \ref{lem:R'}, where the order of $\left(\frac{1}{p-1} - \frac{1}{N-1}\right)^{\frac{1}{2}}$ turns to $\left(\frac{1}{p-1} - \frac{1}{N-1}\right)$.
Combining Lemma \ref{lem:S'} and Lemma \ref{lem:R'}, 
and using Lemma \ref{lem:gronwalltype} again, one gets that
\begin{equation}\label{eq:WW2}
\begin{aligned}
    &\frac{1}{N}\E\sum\limits_{i\in\mathcal{C}_k}|w_V^i(t)|^2\le C \frac{p}{N}\tau^2 \e^{-C \frac{p}{N}t} +  C \frac{p}{N}\left(\frac{1}{p-1} - \frac{1}{N-1}\right)\tau\e^{-C \frac{p}{N}t},\\
    &\frac{1}{N}\E\sum\limits_{i\in\mathcal{C}_k}|w_X^i(t)|^2\le  C \frac{p}{N}\left(\frac{1}{p-1} - \frac{1}{N-1}\right)\tau +C \frac{p}{N}\tau^2 .
    \end{aligned}
\end{equation}

\paragraph{Step 1(c).}
Since our analysis relies on a combination of Young's inequality and the Gr\"onwall-type inequality (Lemma \ref{lem:gronwalltype}), it's not easy to determine the precise exponential decay rate explicitly. Nevertheless, we are able to establish the $\mathcal{O}(\tau)$ estimate for $\mathbb{E}\sum\limits_{i\in\mathcal{C}_k}|w_X^i(t)|^2$, which provides a crucial foundation for further bootstrapping.

In this step, we refine the exponential decay rate in \eqref{eq:WW2}. Then, in Section \ref{subsubsec:refined2}, by applying the modified \eqref{eq:WW2} to Lemma \ref{lem:S'} and Lemma \ref{lem:R'}, we derive the improved estimates in Lemma \ref{lem:S''} and Lemma \ref{lem:R''}. Combining these results with Young's inequality, it holds
\begin{align*}
    S(t)+R(t)\le & -\frac{C}{N}\E\sum\limits_{i\in\mathcal{C}_k}|w_V^i(t)|^2 \\
     &+ C\frac{p}{N}\tau \left(\frac{1}{p-1} - \frac{1}{N-1}\right)\e^{-C_3 \frac{p}{N}t} + C\frac{p}{N}\tau^2 \e^{-\frac{C_3}{2} \frac{p}{N}t}.
\end{align*}
Therefore by Gr\"onwall's inequality,
\begin{equation*}
    \frac{1}{N}\E\sum\limits_{i\in\mathcal{C}_k}|w_V^i(t)|^2 \le C\frac{p}{N}\tau \left(\frac{1}{p-1} - \frac{1}{N-1}\right)\e^{-C_3 \frac{p}{N}t} + C\frac{p}{N}\tau^2 \e^{-\frac{C_3}{2} \frac{p}{N}t}.
\end{equation*}

By the similar procedure between \eqref{eq:pf1step1.4} and \eqref{eq:pf1step1.5}, one obtains
\begin{equation*}
    \frac{1}{N}\E\sum\limits_i |\hat{V}^{ii}(t)-\tilde{V}^i (t)|^2 \le C \tau^2 \e^{-C_3 \frac{p}{N}t} +  C\left(\frac{1}{p-1} - \frac{1}{N-1}\right)\tau\e^{-C_3 \frac{p}{N}t},
\end{equation*}
and
\begin{equation*}
    \E \frac{1}{N}\sum\limits_i|\hat{V}^{ii}(\frac{N}{p}t)-\tilde{V}^i (\frac{N}{p}t)|^2 \le C \tau^2 \e^{-C_3 t} +  C \left(\frac{1}{p-1} - \frac{1}{N-1}\right)\tau\e^{-C_3 t}.
\end{equation*}
Now we finish the Step 1.

\paragraph{Step 2.} Then we estimate $\E \left|\hat{V}^{ii}(\frac{N}{p}t)-V^i(t)\right|^2$. 
By the random time change relation discussed in Section \ref{subsec:aux sys} and the notation \eqref{ntt:ti}, one has
    \begin{equation*}
        \E \left|\hat{V}^{ii}(\frac{N}{p}t)-V^i(t)\right|^2 = \E  \left|V^i(\frac{N}{p}t)^{(i)}-V^i(t)\right|^2.
    \end{equation*}
    Recall the notation $\eta_t^{i}$ defined in \eqref{ntt:eta}.
    By the flocking property stated in Lemma \ref{lem:V2diff},
    \begin{equation*}
        \E  \left|V^i(\frac{N}{p}t)^{(i)}-V^i(t)\right|^2 \le C \E \exp\left(-2\psi_0 (\eta_{\frac{N}{p}t}^i \tau \wedge t)\right) \left|\eta_{\frac{N}{p}t}^i \tau - t\right|^2.
    \end{equation*}
    For $n = \frac{N}{p}\frac{t}{\tau}$, one has
    \begin{equation*}
        \begin{aligned}
            &\E \,\exp\left(-2\psi_0 (\eta_{\frac{N}{p}t}^i \tau \wedge t)\right) \left|\eta_{\frac{N}{p}t}^i \tau - t\right|^2\\
            &= \sum\limits_{r=0}^n C_n^r \e^{-2\psi_0 (r\tau\wedge t)}|r\tau -t|^2 \left(\frac{p}{N}\right)^r \left(1-\frac{p}{N}\right)^{n-r}\\
            &\le C \sum\limits_{r=0}^{n}C_n^r\e^{-2\psi_0 r\tau}|r\tau -t|^2 \left(\frac{p}{N}\right)^r \left(1-\frac{p}{N}\right)^{n-r}\\
            &+ C\sum\limits_{r=0}^{n}C_n^r\e^{-2\psi_0t}|r\tau -t|^2 \left(\frac{p}{N}\right)^r \left(1-\frac{p}{N}\right)^{n-r}\\
            &=: I_3 +I_4.
        \end{aligned}
    \end{equation*}
    For $I_3,$  by Lemma \ref{lem:AGB}, taking $A= \e^{-2\psi_0 \tau}$ and $G=\frac{p}{N}A+1-\frac{p}{N},$ it holds
    \begin{equation*}
    \begin{aligned}
        I_3 =& G^{n-2}(A-1)^2 t^2 \left(1-\frac{p}{N}\right)^2 + AG^{n-2}t\tau \left(1-\frac{p}{N}\right)\\
        \le& C \e^{-2\psi_0 t (1-\psi_0 \tau)}\psi_0^2 \tau^2 (1-\psi_0\tau)^2 t^2 \left(1-\frac{p}{N}\right)^2 +  C \e^{-2\psi_0 t (1-\psi_0 \tau)}t\tau \left(1-\frac{p}{N}\right),
    \end{aligned}
    \end{equation*}
    for $N$ being large enough. 

    For $I_4,$ similarly, take $A=G=1$ in Lemma \ref{lem:AGB}. Then one has
    \begin{equation*}
        I_4 \le \sum\limits_{r=0}^{n}C_n^r\e^{-2\psi_0t}|r\tau -t|^2 \left(\frac{p}{N}\right)^r \left(1-\frac{p}{N}\right)^{n-r}
        \le C \e^{-2\psi_0 t}t\tau\left(1 - \frac{p}{N}\right).
    \end{equation*}
    Thus, it holds
    \begin{equation*}
        \E \left|\hat{V}^{ii}(\frac{N}{p}t)-V^i(t)\right|^2 \le C \left(1 - \frac{p}{N}\right)\e^{-2\psi_0 t}\tau .
    \end{equation*}

\paragraph{Step 3.} Combining Step 1 and Step 2, one has that
\begin{align}
        \notag\E &\frac{1}{N}\sum\limits_{i=1}^N\left|\tilde{V}^i(\frac{N}{p}t) - V^i(t)\right|^2 
        \le \E\frac{1}{N}\sum\limits_{i=1}^N|w_V^i (\frac{N}{p}t)|^2 + \E \frac{1}{N}\sum\limits_{i=1}^N\left|\hat{V}^{ii}(\frac{N}{p}t)-V^i(t)\right|^2\\
        \le &C\tau^2\ \e^{-C_3t} + C\tau \left[\left(\frac{1}{p-1}-\frac{1}{N-1}\right)\e^{-C_3 t} + \left(1 - \frac{p}{N}\right)\e^{-2\psi_0 t}\right]\label{eq:CSerr result}\\
        \notag \le &C\tau^2\e^{-C_3t}+ C\tau \left(1-\frac{p}{N}+\frac{1}{p-1}-\frac{1}{N-1}\right)\e^{-C_3 t}.
\end{align}
Now we finish the proof.

\section{Auxiliary lemmas of main results}\label{sec:auxiliary lemmas}
In this section, we present and prove auxiliary lemmas in the order of their appearance in the main proof of Section \ref{sec: pf of thms}.

\subsection{Auxiliary lemmas of flocking dynamics}
First we show a simple preliminary flocking estimate for some subsystem in the RBM-r system.
\begin{lemma}\label{lem:basic flocking}
    Let $(\tilde{X},\tilde{V})$ be the solutions to \eqref{eq: cs} with batch size $p$. Suppose that the communication weight satisfies the assumption \eqref{ass:psi}. Then it holds that 
    \begin{equation*}
         \sum\limits_{i,j\in\mathcal{C}_k} |\tilde{V}^i(t)-\tilde{V}^j(t)|^2 \le \sum\limits_{i,j\in\mathcal{C}_k} |\tilde{V}^i(t_k)-\tilde{V}^j(t_k)|^2 \exp\left( -2\frac{p}{p-1}\psi_0 (t-t_k)\right).
        \end{equation*}
\end{lemma}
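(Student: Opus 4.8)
The statement is a within-batch flocking decay for a single RBM-r subsystem on the fixed time window $[t_k,t_{k+1})$, with communication weight bounded below by $\psi_0$. The plan is to work entirely inside the batch $\mathcal{C}_k$ (of size $p$), treat the batch dynamics as an autonomous Cucker--Smale system of $p$ particles with coupling $\frac{1}{p-1}\psi(\cdot)$, and run the classical energy/fluctuation estimate. First I would recall from \eqref{eq:LemM1} that the batch momentum $\sum_{i\in\mathcal{C}_k}\tilde V^i(t)$ is conserved on $[t_k,t_{k+1})$; this lets me pass freely between $\sum_{i,j\in\mathcal{C}_k}|\tilde V^i-\tilde V^j|^2$ and the centered energy $\sum_{i\in\mathcal{C}_k}|\tilde V^i - \bar V_k|^2$, where $\bar V_k$ is the (time-independent) batch mean, via the identity $\sum_{i,j\in\mathcal{C}_k}|\tilde V^i-\tilde V^j|^2 = 2p\sum_{i\in\mathcal{C}_k}|\tilde V^i-\bar V_k|^2$.

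Next I would compute the time derivative of $E_k(t):=\sum_{i\in\mathcal{C}_k}|\tilde V^i(t)-\bar V_k|^2$. Using \eqref{eq:rbmr} and symmetrizing in $i,j$ in the usual way,
\begin{equation*}
    \frac{d}{dt}E_k(t) = -\frac{1}{p-1}\sum_{i,j\in\mathcal{C}_k}\psi(|\tilde X^j(t)-\tilde X^i(t)|)\,|\tilde V^i(t)-\tilde V^j(t)|^2.
\end{equation*}
Applying the lower bound $\psi\ge\psi_0$ from \eqref{ass:psi} and then the identity above to rewrite the sum on the right again in terms of $E_k$, I get $\frac{d}{dt}E_k(t) \le -\frac{1}{p-1}\psi_0 \cdot 2p\, E_k(t) = -\frac{2p}{p-1}\psi_0\, E_k(t)$. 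Grönwall's inequality then yields $E_k(t)\le E_k(t_k)\exp(-\frac{2p}{p-1}\psi_0(t-t_k))$, and multiplying through by $2p$ gives exactly the claimed bound on $\sum_{i,j\in\mathcal{C}_k}|\tilde V^i-\tilde V^j|^2$.

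There is essentially no serious obstacle here — the only point that needs a little care is the book-keeping of constants: making sure the factor $\frac{1}{p-1}$ from the coupling normalization combines with the combinatorial factor $2p$ (from converting the double sum to the centered energy) to produce precisely the exponent $2\frac{p}{p-1}\psi_0$ rather than, say, $2\frac{p}{p-1}\psi_0$ off by a factor of $2$. I would double-check the symmetrization step, where $\sum_{i,j}\tilde V^i\cdot\psi_{ij}(\tilde V^j-\tilde V^i)$ becomes $-\tfrac12\sum_{i,j}\psi_{ij}|\tilde V^i-\tilde V^j|^2$, since the leftover $\tfrac12$ interacts with the $2p$ factor. Everything else (momentum conservation, the lower bound on $\psi$, Grönwall) is routine and already available in the excerpt.
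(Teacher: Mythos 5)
Your proposal is correct and follows essentially the same route as the paper: both reduce the double sum to a single-particle energy via batch momentum conservation, symmetrize to get the dissipation $-\frac{1}{p-1}\sum_{i,j\in\mathcal{C}_k}\psi_{ij}|\tilde V^i-\tilde V^j|^2$, bound $\psi\ge\psi_0$, and apply Gr\"onwall. Your constant bookkeeping is right — the factor $\frac{2p}{p-1}\psi_0$ in the exponent is exactly what the computation yields and matches the lemma's statement.
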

\begin{proof}
    The result is derived by direct calculus:
    \begin{align*}
        \frac{d}{dt}&\sum\limits_{i,j\in\mathcal{C}_k} |\tilde{V}^i -\tilde{V}^j|^2 = 2p\frac{d}{dt}\sum\limits_{i\in\mathcal{C}_k}|\tilde{V}^i|^2\\
        =&-\frac{p}{p-1}\sum\limits_{i,j\in\mathcal{C}_k} \psi(|\tilde{X}^j - \tilde{X}^i|)|\tilde{V}^j - \tilde{V}^i|^2\\
        \le& -\frac{p}{p-1}\psi_0 \sum\limits_{i,j\in\mathcal{C}_k} |\tilde{V}^j - \tilde{V}^i|^2.
    \end{align*}
    The Gr\"onwall's inequality leads to the estimate.
\end{proof}
Then, we present an elementary estimate to be used to simplify the decay rate of the relative velocities.
\begin{lemma}[\cite{ha2021uniform}, Lemma 4.1]\label{lem:4.1}
    Let $0\le a \le 1$, $b>0$ be given. Then,
    \begin{equation*}
        a + (1-a)\e^{-x} \le \exp \left( -\frac{1-a}{1+b}x\right), \quad \forall x \in [0,b].
    \end{equation*}
\end{lemma}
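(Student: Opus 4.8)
The statement is an elementary one‑variable inequality; the plan is to take logarithms and reduce it to two easy facts, taking care to use the hypothesis $x\le b$ at exactly the right moment.

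First I would rewrite $a+(1-a)\e^{-x}=1-(1-a)(1-\e^{-x})$ and note that $t:=(1-a)(1-\e^{-x})\in[0,1)$, since $0\le 1-a\le 1$ and $0\le 1-\e^{-x}<1$. Applying the standard bound $\ln(1-t)\le -t$ then gives $\ln\!\big(a+(1-a)\e^{-x}\big)\le -(1-a)(1-\e^{-x})$. Hence it suffices to prove $(1-a)(1-\e^{-x})\ge \tfrac{1-a}{1+b}\,x$ for $x\in[0,b]$; the case $a=1$ is trivial, so this amounts to showing $1-\e^{-x}\ge \tfrac{x}{1+b}$ on $[0,b]$.

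For this last inequality I would exploit the concavity of $s\mapsto 1-\e^{-s}$: on $[0,b]$ its graph lies above the chord through $(0,0)$ and $(b,1-\e^{-b})$, so $1-\e^{-x}\ge \tfrac{1-\e^{-b}}{b}\,x$ for every $x\in[0,b]$. It then remains to verify the constant inequality $\tfrac{1-\e^{-b}}{b}\ge \tfrac{1}{1+b}$, which rearranges to $(1+b)(1-\e^{-b})\ge b$, i.e. $(1+b)\e^{-b}\le 1$, i.e. $1+b\le \e^{b}$ — valid for all $b\ge 0$. Chaining the three estimates and exponentiating yields $a+(1-a)\e^{-x}\le \exp\!\big(-\tfrac{1-a}{1+b}x\big)$, as claimed.

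There is no serious obstacle here, but the point requiring care is that the bound is sharp at $x=0$ and genuinely fails without the restriction $x\le b$ (for large $x$ the left side tends to $a>0$ while the right side tends to $0$); naive moves such as weighted AM–GM or Jensen produce the reverse inequality, and differentiating directly in $x$ or in $a$ runs into non‑monotonicity. The decisive idea is therefore the concavity step, which turns the constraint $x\le b$ into the worst case $x=b$ and reduces everything to the classical estimate $\e^{b}\ge 1+b$.
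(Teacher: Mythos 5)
Your proof is correct and complete: the rewriting $a+(1-a)e^{-x}=1-(1-a)(1-e^{-x})$, the bound $\ln(1-t)\le -t$, the chord estimate from concavity of $s\mapsto 1-e^{-s}$ on $[0,b]$, and the final reduction to $e^{b}\ge 1+b$ all check out, and you correctly isolate where the hypothesis $x\le b$ enters. The paper itself gives no proof of this lemma — it simply defers to Lemma 4.1 of the cited reference — so there is nothing internal to compare against; your argument is a valid self-contained elementary verification of the cited estimate.
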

\begin{proof}
    We omit the proof but refer to \cite[Lemma 4.1]{ha2021uniform}.
\end{proof}

\subsection{Auxiliary lemmas of Theorem \ref{thm:main}.}\label{subsec:aux lmm}

In this subsection, we establish several auxiliary lemmas that will be essential for proving Theorem \ref{thm:main}. For organizational clarity, we defer the estimates of $S$ and $R$ to  Section \ref{sec:subsecSR}.

Lemma \ref{lem:AGB} provides several combinatorial formulas obtained through direct calculation. Based on Lemma \ref{lem:AGB}, we then derive in Lemma \ref{lem:ui=t} an error estimate arising from the random time change relation. Next, we analyze the expectation and variance induced by the random batch selection process.
 
\begin{lemma}\label{lem:AGB}
         For any constant $A,$ define $G:=\frac{p}{N}A + 1 - \frac{p}{N}.$ By the property of combinatorial number, one has
    \begin{equation*}
        \sum\limits_{r=0}^n C_n^r A^r r^2 \left(\frac{p}{N}\right)^r \left(1-\frac{p}{N}\right)^{n-r} = A\frac{p}{N}n\bigg(G^{n-2}(n-1)A\frac{p}{N} + G^{n-1}\bigg),
    \end{equation*}
    \begin{equation*}
        \sum\limits_{r=0}^n C_n^r A^r r \left(\frac{p}{N}\right)^r \left(1-\frac{p}{N}\right)^{n-r} = A\frac{p}{N}n G^{n-1},
    \end{equation*}
    and
    \begin{equation*}
        \sum\limits_{r=0}^n C_n^r A^r\left(\frac{p}{N}\right)^r \left(1-\frac{p}{N}\right)^{n-r} =  G^n.
    \end{equation*}
    \end{lemma}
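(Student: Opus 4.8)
The plan is to prove all three identities by differentiating the binomial theorem. Start from the elementary identity
\begin{equation*}
    \sum_{r=0}^n C_n^r A^r \left(\tfrac{p}{N}\right)^r \left(1-\tfrac{p}{N}\right)^{n-r} = \left(A\tfrac{p}{N} + 1 - \tfrac{p}{N}\right)^n = G^n,
\end{equation*}
which is just the binomial expansion of $(u + v)^n$ with $u = A\frac{p}{N}$ and $v = 1 - \frac{p}{N}$. This gives the third formula immediately. To obtain the other two, I would treat the left-hand side as a polynomial in the formal variable $A$ (keeping $\frac{p}{N}$ fixed) and apply the operator $A\frac{d}{dA}$, which brings down a factor of $r$ each time it is applied to $A^r$.

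First I would apply $A\frac{d}{dA}$ once to $G^n = \left(A\frac{p}{N} + 1 - \frac{p}{N}\right)^n$. The left side becomes $\sum_{r=0}^n C_n^r A^r r \left(\frac{p}{N}\right)^r \left(1-\frac{p}{N}\right)^{n-r}$, while the right side becomes $A \cdot n G^{n-1} \cdot \frac{p}{N} = A\frac{p}{N} n G^{n-1}$, using $\frac{d}{dA} G = \frac{p}{N}$. This is exactly the second formula. Next I would apply $A\frac{d}{dA}$ a second time to the result, i.e. compute $A\frac{d}{dA}\left(A\frac{p}{N} n G^{n-1}\right)$. By the product rule this equals $A\frac{p}{N} n G^{n-1} + A \cdot A\frac{p}{N} n (n-1) G^{n-2} \frac{p}{N} = A\frac{p}{N}n\big(G^{n-1} + (n-1) A\frac{p}{N} G^{n-2}\big)$, which matches the first formula. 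Meanwhile the left side has picked up an extra factor of $r$, yielding $\sum_{r=0}^n C_n^r A^r r^2 \left(\frac{p}{N}\right)^r \left(1-\frac{p}{N}\right)^{n-r}$, as claimed.

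The computation is entirely routine — there is no real obstacle — but two small points deserve care. One must note that the formulas for $n = 1$ (where $G^{n-2}$ would formally appear) and $n = 0$ still hold when interpreted correctly: for $n \le 1$ the coefficient $(n-1)$ or $n$ in front of the $G^{n-2}$ term vanishes or the identity is trivial, so no division by a negative power of $G$ actually occurs, and in any case in the application $n = \frac{N}{p}\frac{t}{\tau}$ is large. The other point is purely bookkeeping: one should verify the chain rule factor $\frac{dG}{dA} = \frac{p}{N}$ is carried consistently through both differentiations. Alternatively, one can bypass calculus entirely and prove the first two identities by the standard combinatorial manipulations $C_n^r r = n C_{n-1}^{r-1}$ and $C_n^r r^2 = n C_{n-1}^{r-1} + n(n-1) C_{n-2}^{r-2}$, then reindex and apply the binomial theorem; I would present whichever is shorter, but the generating-function derivation above is the cleanest.
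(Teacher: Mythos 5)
Your proof is correct and is precisely the "straightforward calculus" the paper invokes without detail: the binomial theorem followed by two applications of $A\frac{d}{dA}$, with the chain-rule factor $\frac{p}{N}$ tracked correctly in each differentiation. The derivation matches the stated formulas exactly, so nothing further is needed.
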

   \begin{proof}
       The proof is straightforward by calculus.
   \end{proof}

    In particular, if we set $n= \frac{\frac{N}{p}}{t}\tau$, (suppose $\frac{\frac{N}{p}}{t}\tau \in\mathbb{N}$ without loss of generality), then by Lemma \ref{lem:AGB}, one has 
    \begin{equation}\label{lem:rtau-t}
        \sum\limits_{r=0}^n C_n^r A^r |r\tau-t|^2 \left(\frac{p}{N}\right)^r \left( 1- \frac{p}{N}\right)^{n-r} = G^{n-2}(A-1)^2 t^2 \left( 1- \frac{p}{N}\right)^2 + A G^{n-2}t\tau \left( 1- \frac{p}{N}\right).
    \end{equation}

Next, for the RBM-r, we need to estimate the error term caused by the random time change.
\begin{lemma}\label{lem:ui=t}
Recall the definition of $\hat{V}$ in \eqref{eq:IPS'} and the notation $t^{(i)}$ defined in \eqref{ntt:ti}, it holds that
     \begin{equation}\label{eq:lemui1}
          \E[\sum\limits_{i,j\in\mathcal{C}_k}|\hat{V}^{jj}(t) - \hat{V}^{ji}(t)|^2] \le Cp^2 \tau^2 \left(1-\frac{p}{N}\right)\e^{-\psi_0(1-\psi_0 \tau)\frac{p}{N}t},
     \end{equation}
     for any $j=1,\cdots ,N$, with the positive constant $C$ depending on $\psi$ and $D(V^{in})$. In addition, we have
     \begin{equation}\label{eq:lemui2}
         \E[\E[\e^{-2\psi_0t^{(i)}}\mid i\in\mathcal{C}_k]] \le \exp(-\frac{p}{N}2\psi_0(1-\psi_0 \tau)t).
     \end{equation}
\end{lemma}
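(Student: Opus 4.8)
\textbf{Proof proposal for Lemma~\ref{lem:ui=t}.}
The plan is to exploit the random time change described in Section~\ref{subsec:aux sys}: each copy $\hat Z^{\cdot i}$ of the IPS runs only on the time set where $i\in\mathcal C_k$, so that $\hat V^{ji}(\frac{N}{p}t)=V^j((\frac{N}{p}t)^{(i)})$ while $\hat V^{jj}(\frac{N}{p}t)=V^j((\frac{N}{p}t)^{(j)})$, and the two elapsed times differ only because $i$ and $j$ are selected on different subsets of steps. For the first bound \eqref{eq:lemui1}, I would first use the exchangeability of particles and the zero-sum condition to write $\sum_{i,j\in\mathcal C_k}|\hat V^{jj}(t)-\hat V^{ji}(t)|^2$ in terms of $|V^j(t_1)-V^j(t_2)|^2$ for two correlated random elapsed times $t_1,t_2$, then apply the Cucker--Smale flocking stability estimate stated right after Proposition~\ref{lem:V2diff}, namely $|V^j(t_2)-V^j(t_1)|^2\le \psi_M^2(t_2-t_1)^2 D(V^{in})e^{-2\psi_0 t_1}$. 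This reduces everything to estimating $\E[(t_1-t_2)^2 e^{-2\psi_0(t_1\wedge t_2)}]$, where $t_1,t_2$ are (dependent) binomial-type sums of $\tau$'s counting selections of $j$ and of $i$. The difference $t_1-t_2$ has mean zero and variance of order $\tau^2 n \frac pN(1-\frac pN)$ with $n=\frac{N}{p}\frac t\tau$, which produces the factor $\tau^2(1-\frac pN)$ and, after the geometric/binomial bookkeeping, the exponent $\psi_0(1-\psi_0\tau)\frac pN t$; the $p^2$ prefactor comes from the double sum over $i,j\in\mathcal C_k$ (there are $p^2$ terms, and after using exchangeability each contributes a common bound).

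The cleanest way to organize the combinatorial part is to condition on the full sequence of batches and count, for a fixed pair $i\ne j$, the number $r$ of steps among the first $n$ at which a given index is selected; the probability of being selected at any step is $\frac pN$ independently across steps, so the count is $\mathrm{Binomial}(n,\frac pN)$, and the elapsed time is $r\tau$ (modulo the boundary term $t-t_{k+1}$, which is absorbed into the constant). Then \eqref{eq:lemui2} is exactly the statement
$$
\E\big[e^{-2\psi_0 t^{(i)}}\big]=\sum_{r=0}^n C_n^r e^{-2\psi_0 r\tau}\Big(\frac pN\Big)^r\Big(1-\frac pN\Big)^{n-r}=\Big(\tfrac pN e^{-2\psi_0\tau}+1-\tfrac pN\Big)^n=G^n,
$$
with $A=e^{-2\psi_0\tau}$, $G=\frac pN A+1-\frac pN$, by the last identity in Lemma~\ref{lem:AGB}. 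Finally I would bound $G\le \exp\big(\frac pN(A-1)\big)\le\exp\big(-\frac pN 2\psi_0(1-\psi_0\tau)\tau\big)$ using $A-1=e^{-2\psi_0\tau}-1\le -2\psi_0\tau(1-\psi_0\tau)$ (from $e^{-x}\le 1-x+\frac{x^2}{2}$) and $1+y\le e^y$, and raise to the $n$-th power with $n\tau=\frac Np t$ to obtain $G^n\le\exp\big(-\frac pN 2\psi_0(1-\psi_0\tau)t\big)$, which is \eqref{eq:lemui2}.

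For \eqref{eq:lemui1} the same counting gives, after applying the stability estimate, a bound of the form $C p^2 D(V^{in})\psi_M^2\,\E[|(\eta^{j}-\eta^{i})\tau|^2 e^{-2\psi_0\tau(\eta^j\wedge \eta^i)}]$ where $\eta^j,\eta^i$ are the two selection counts. Dropping the $\wedge$ by bounding $e^{-2\psi_0\tau(\eta^j\wedge\eta^i)}\le \frac12(e^{-2\psi_0\tau\eta^j}+e^{-2\psi_0\tau\eta^i})e^{\text{const}\cdot|\eta^j-\eta^i|\tau}$ is one route, but the slicker one is to write $(\eta^j-\eta^i)^2\le 2(\eta^j-n\frac pN)^2+2(\eta^i-n\frac pN)^2$ is not quite enough because of the coupling with the exponential; instead I expect the identity \eqref{lem:rtau-t} of Lemma~\ref{lem:AGB}, specialized with $A=e^{-2\psi_0\tau}$, to deliver precisely a term $G^{n-2}(A-1)^2 t^2(1-\frac pN)^2+AG^{n-2}t\tau(1-\frac pN)$, and since $(A-1)^2 t^2\le C\tau^2\cdot t^2$ and $t\tau$ is of lower order after multiplying by the exponential decay $G^{n-2}\le e^{-\psi_0(1-\psi_0\tau)\frac pN t}$ (for $N$ large, so that $\frac12$ of the rate survives), both terms are $\le C\tau^2(1-\frac pN)e^{-\psi_0(1-\psi_0\tau)\frac pN t}$ up to the $p^2$ factor. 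The main obstacle I anticipate is handling the \emph{dependence} between the two elapsed times $t^{(i)}$ and $t^{(j)}$ correctly — at each step exactly $p$ of the $N$ indices are chosen, so $\mathbb I_{\{i\in\mathcal C_k\}}$ and $\mathbb I_{\{j\in\mathcal C_k\}}$ are negatively correlated rather than independent — and making sure the variance computation of $t^{(i)}-t^{(j)}$ reflects this (it only improves the constant, so a clean way is to first prove everything treating selections as independent Bernoulli$(\frac pN)$ and then note the true law is more concentrated by negative association), together with the boundary term $t-t_{k+1}$ which must be shown to contribute only lower-order corrections absorbed into $C$.
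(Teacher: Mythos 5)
Your proposal is correct and follows essentially the same route as the paper: the random time change reduces \eqref{eq:lemui1} to the flocking stability estimate stated after Proposition \ref{lem:V2diff}, the binomial moment identities of Lemma \ref{lem:AGB} (with $A=\e^{-2\psi_0\tau}$, $G=\frac{p}{N}A+1-\frac{p}{N}$) handle the selection counts, and the polynomial-in-$t$ factors are absorbed by sacrificing half of the decay rate $2\psi_0(1-\psi_0\tau)\frac{p}{N}$, exactly as in the paper's proof. The only cosmetic difference is that the paper evaluates the two-index sum $\sum_{r_1,r_2}|r_1-r_2|^2\,\e^{-2\psi_0 r_2\tau}(\cdots)$ directly rather than reducing to the single-sum identity \eqref{lem:rtau-t}, and, like you, it treats the selection indicators as independent Bernoulli$(\frac{p}{N})$ variables (your remark on negative association only strengthens this).
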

\begin{proof}

For the first inequality, without loss of generality, we suppose that $\eta_t^j\ge \eta_t^i$. Then, it holds that
\begin{align}
    \notag\E&[\sum\limits_{i,j\in\mathcal{C}_k}|\hat{V}^{jj}(t) - \hat{V}^{ji}(t)|^2] \\
    \notag\le&
    C\E\sum\limits_{i,j\in\mathcal{C}_k}\left| \int_{t^{(i)}}^{t^{(j)}}\frac{1}{N-1}\sum\limits_{\ell}\psi(|\hat{X}^{\ell j}(s)-\hat{X}^{jj}(s)|) (\hat{V}^{\ell j}(s)-\hat{V}^{jj}(s))ds\right|^2\\
    \label{eq:lemuit1}\le & C\tau^2 \E\sum\limits_{i,j\in\mathcal{C}_k}|\eta_t^j-\eta_t^i|^2 \e^{-2\psi_0 t^{(i)}}\\
    \label{eq:lemuit2}\le& C\tau^2 p^2 \sum\limits_{r_1=0}^n\sum\limits_{r_2=0}^n \e^{-2\psi_0 r_2 \tau} |r_1-r_2|^2 C_n^{r_1} C_n^{r_2} \left(\frac{p}{N}\right)^{r_1+r_2}\left(1-\frac{p}{N}\right)^{2n-r_1-r_2}\\
    \label{eq:lemuit3}=&C\tau^2 p^2 G^{n-2}(G-A)n\frac{p}{N}\left[\frac{p}{N}n(G-A) - \frac{p}{N}(G-A) + G\right].
\end{align}
Here, \eqref{eq:lemuit1} is derived from the combination of Proposition \ref{lem:V2diff} and the fact $\tau\ll 1$. We set $n:= \lceil\frac{t}{\tau}\rceil$ in \eqref{eq:lemuit2}. Taking $A=\e^{-2\psi_0 \tau}$ and $G=1+\frac{p}{N}(A-1)$ in Lemma \ref{lem:AGB}, one obtains \eqref{eq:lemuit3}. Note that 
$    G-A = (1-A)\left(1-\frac{p}{N}\right)$, and $(1-A)\sim 2\psi_0 \tau$, and
\begin{equation*}
    G^n \le C\e^{-\frac{p}{N}2\psi_0(1-\psi_0 \tau)t}.
\end{equation*}
It holds that
\begin{equation*}
    \E[\sum\limits_{i,j\in\mathcal{C}_k}|\hat{V}^{jj}(t) - \hat{V}^{ji}(t)|^2]\le
    \eqref{eq:lemuit3} \le  C\tau^2 p^2 \left(1-\frac{p}{N}\right)\e^{-\frac{p}{N}\psi_0(1-\psi_0 \tau)t},
\end{equation*}
since there exists a constant $C$ such that
\begin{equation*}
    (t^2 + t) \e^{-\frac{p}{N}\psi_0(1-\psi_0 \tau)t} \le C.
\end{equation*}

For \eqref{eq:lemui2}, 
it holds that
\begin{align}
    \notag  \E&[\E[  \e^{-2\psi_0 t^{(i)}}\mid i\in\mathcal{C}_k]] =   \E[\E[ \e^{-2\psi_0 t^{(i)}}\mid i\in\mathcal{C}_k]]\\
    \label{eq:em:ui=t 1}\le & C\sum\limits_{r=0}^{n-1}\left(\frac{p}{N}\right)^r\left(1-\frac{p}{N}\right)^{n-1-r}  C_{n-1}^r \e^{-2\psi_0 r \tau} \e^{-2\psi_0 (t-t_k)\tau}.
\end{align}
Then again by Lemma \ref{lem:AGB}, one obtains \eqref{eq:lemui2}.  
\end{proof}

Then, we complete the lemma on the expectation and variance induced by the random batch division.
\begin{lemma}\label{lem:lem2}
Recall the definitions of $Z$ and $\tilde{Z}$ in Section \ref{subsec:aux sys} and $\chi_{k,i}$ in \eqref{ntt:chi}. Then for $i\in\mathcal{C}_k$, $t\in[t_k,t_{k+1}),$ it holds
    \begin{equation}\label{eq:lem2.1}
        \E [\chi_{k,i}(\hat{Z}^{\cdot,i}(t_k))\mid i\in\mathcal{C}_k] = 0,
    \end{equation}
    \begin{equation}\label{eq:lem2.2}
        \E [w_V^i(t_k)\chi_{k,i}(\hat{Z}^{\cdot,i}(t_k))\mid i\in\mathcal{C}_k] = 0,
    \end{equation}
    \begin{equation}\label{eq:lem2.3}
        \E\sum\limits_{i\in\mathcal{C}_k}\mathrm{Var}(\chi_{k,i}(\hat{Z}^{\cdot,i}(t_k))) \le Cp \left(\frac{1}{p-1}-\frac{1}{N-1}\right)\exp\left(-\frac{p}{N}2C_2 t_k\right),
    \end{equation}
    \begin{equation}\label{eq:lem2.4}
        \E\sum\limits_{i\in\mathcal{C}_k}\left| \chi_{k,i} (\hat{Z}^{\cdot,i}(t)) - \chi_{k,i} (\hat{Z}^{\cdot,i}(t_k))\right|^2 \le Cp \tau^2\exp\left(-\frac{p}{N}2C_2 t\right),
    \end{equation}
    where $C$ denote positive constant that depend on $\psi$, $D(X^{in})$ and $D(V^{in})$, and $C_2$ is defined in \eqref{ntt:C1C2}.
\end{lemma}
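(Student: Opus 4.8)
The plan is to establish the four claims of Lemma \ref{lem:lem2} in order, exploiting throughout the key structural facts that IPS' (equation \eqref{eq:IPS'}) consists of $N$ independent copies of the Cucker--Smale system whose batch-activation is determined by the random batches, and that at time $t_k$ the randomness of $\mathcal{C}_k$ is independent of the state $\hat{Z}(t_k)$ (which is $\mathcal{F}_{k-1}$-measurable). The first claim \eqref{eq:lem2.1} follows by taking the conditional expectation of $\chi_{k,i}$ given $i \in \mathcal{C}_k$ and given $\hat{Z}(t_k)$: the remaining $p-1$ indices in $\mathcal{C}_k$ are sampled uniformly from the other $N-1$ indices, so for each fixed $j \ne i$ we have $\mathbb{P}(j \in \mathcal{C}_k \mid i \in \mathcal{C}_k) = \frac{p-1}{N-1}$. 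Hence
\begin{equation*}
    \E\Big[\frac{1}{p-1}\sum_{j\in\mathcal{C}_k}\psi(|\hat{X}^{ji}-\hat{X}^{ii}|)(\hat{V}^{ji}-\hat{V}^{ii}) \,\Big|\, i\in\mathcal{C}_k, \hat{Z}(t_k)\Big] = \frac{1}{N-1}\sum_{j=1}^N \psi(|\hat{X}^{ji}-\hat{X}^{ii}|)(\hat{V}^{ji}-\hat{V}^{ii}),
\end{equation*}
which cancels the second term in $\chi_{k,i}$. Claim \eqref{eq:lem2.2} is then immediate: $w_V^i(t_k)$ is a function of $\hat{Z}(t_k)$ and $\tilde{Z}(t_k)$, hence $\mathcal{F}_{k-1}$-measurable and independent of the batch draw, so conditioning on $\hat{Z}(t_k)$ and $\tilde{Z}(t_k)$ and using \eqref{eq:lem2.1} gives zero.

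For the variance bound \eqref{eq:lem2.3}, I would compute $\mathrm{Var}(\chi_{k,i}(\hat{Z}^{\cdot,i}(t_k)))$ conditionally on the state, treating $\chi_{k,i}$ as a (centered, by \eqref{eq:lem2.1}) sampling fluctuation of the empirical mean of the vectors $a_j := \psi(|\hat{X}^{ji}-\hat{X}^{ii}|)(\hat{V}^{ji}-\hat{V}^{ii})$ over a simple random subsample of size $p-1$ from a population of size $N-1$ (after fixing $i \in \mathcal{C}_k$). The standard finite-population sampling-variance formula produces a prefactor proportional to $\frac{1}{p-1}-\frac{1}{N-1}$ (the usual form $\frac{1}{p-1} - \frac{1}{N-1} = \frac{N-p}{(p-1)(N-1)}$, up to the finite-population correction), multiplied by the population variance of the $a_j$'s. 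The latter is controlled by $\sum_j |a_j|^2 \le \psi_M^2 \sum_j |\hat{V}^{ji}-\hat{V}^{ii}|^2 \le \psi_M^2 \sum_{j,\ell}|\hat{V}^{ji}-\hat{V}^{\ell i}|^2$, and here is where the flocking decay enters: since for each fixed $i$ the family $\{\hat{V}^{\ell i}\}_\ell$ evolves (on its active time scale) like the original Cucker--Smale velocities, the diameter-type quantity $\frac{1}{N^2}\sum_{j,\ell}|\hat{V}^{ji}-\hat{V}^{\ell i}|^2$ decays like $e^{-2C_2 \cdot (\text{active time})}$, and after taking the outer expectation over the batch history, the active time of particle $i$ up to $t_k$ concentrates around $\frac{p}{N}t_k$, yielding the factor $\exp(-\frac{p}{N}2C_2 t_k)$. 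Summing over $i \in \mathcal{C}_k$ gives the extra factor $p$. I expect the main obstacle here to be making the ``active time is about $\frac{p}{N}t_k$'' heuristic rigorous inside the exponential; this is exactly the kind of estimate packaged in Lemma \ref{lem:AGB} and \eqref{eq:lemui2} of Lemma \ref{lem:ui=t}, so I would route the computation through a binomial sum $\sum_r C_{k}^r (\frac{p}{N})^r(1-\frac{p}{N})^{k-r} e^{-2C_2 r\tau}$ and apply the third identity of Lemma \ref{lem:AGB} with $A = e^{-2C_2\tau}$, noting $G^k \le C e^{-\frac{p}{N}2C_2 t_k}$; the lower bound $\psi_0$ (through $C_2 = \psi_0(1-\psi_0\tau)$) is what makes the exponent come out right.

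For the last claim \eqref{eq:lem2.4}, the quantity $\chi_{k,i}(\hat{Z}^{\cdot,i}(t)) - \chi_{k,i}(\hat{Z}^{\cdot,i}(t_k))$ is a difference of the same sampling functional evaluated at two nearby states, so I would bound it by the Lipschitz dependence of $\chi_{k,i}$ on $(\hat{X},\hat{V})$ — $\psi$ is bounded and Lipschitz, the velocity differences are bounded by $\mathcal{D}(V^{in})$ (Proposition \ref{ineq: coarseD} and its IPS' analogue), and over $[t_k,t)$ with $t - t_k < \tau$ the state moves by $O(\tau)$ in velocity and $O(\tau)$ in the relevant position differences, using the Cucker--Smale ODE and the flocking decay. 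Concretely, each $\hat{V}^{ji}(t) - \hat{V}^{ji}(t_k)$ and each $\hat{X}^{ji}(t)-\hat{X}^{ji}(t_k)$ (in the relative sense that matters) is $O(\tau)$ with an $e^{-C_2 \cdot(\text{active time})}$ gain inherited from $\mathcal{D}_{\hat{V}}$-decay, exactly as in the displayed inequality $|V^i(t_2)-V^i(t_1)|^2 \le \psi_M^2(t_2-t_1)^2 D(V^{in})e^{-2\psi_0 t_1}$ recorded after Proposition \ref{lem:V2diff}. Squaring gives $\tau^2$, summing over $i \in \mathcal{C}_k$ gives the factor $p$, and taking the expectation over batch histories and averaging the active-time exponential via Lemma \ref{lem:AGB} (as above) converts the active-time decay into $\exp(-\frac{p}{N}2C_2 t)$. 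I do not expect this step to present a genuine obstacle beyond bookkeeping; the substance of the lemma is concentrated in \eqref{eq:lem2.1}--\eqref{eq:lem2.3}, with \eqref{eq:lem2.3} being the delicate one because it simultaneously requires the finite-population variance structure (to get the $\frac{1}{p-1}-\frac{1}{N-1}$ factor, which is the whole point of RBM error being $O(\tau)$ rather than $O(1)$) and the flocking decay translated through the random time change.
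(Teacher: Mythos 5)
Your proposal is correct and follows essentially the same route as the paper: the conditional sampling probability $\frac{p-1}{N-1}$ for the mean-zero identities, the finite-population variance formula producing the factor $\bigl(\frac{1}{p-1}-\frac{1}{N-1}\bigr)\Lambda_i$ with $\Lambda_i$ the population variance controlled by the flocking decay, and the Lipschitz-plus-$O(\tau)$-increment argument for the last bound, with the active-time exponential averaged through the binomial identities of Lemma \ref{lem:AGB} (i.e.\ \eqref{eq:lemui2}) to yield the $\exp(-\frac{p}{N}2C_2 t)$ factors. No substantive gaps.
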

\begin{proof}
For notational convenience, we use the symbol $\lesssim$ to denote inequality up to a constant.

\textbf{$\bullet$ Proof of \eqref{eq:lem2.1} and \eqref{eq:lem2.2}.}

The proof follows from the same arguments in [Lemma 3.1, \cite{jin2020random}] or [Lemma 5.2, \cite{ha2021uniform}]. The equalities \eqref{eq:lem2.1} and \eqref{eq:lem2.2} hold due to the independence between the randomness of the batch division and the variable $\hat{Z}^{\cdot,i}(t_k).$ 

\textbf{$\bullet$ Proof of \eqref{eq:lem2.3}.}

Moreover, one has
\begin{equation*}
    \mathrm{Var}(\chi_{k,i}(\hat{Z}^{\cdot,i}(t_k))) = \left(\frac{1}{p-1}-\frac{1}{N-1}\right) \Lambda_i (\hat{Z}^{\cdot,i}(t_k)),
\end{equation*}
where
\begin{equation*}
    \Lambda_i(Z):= \frac{1}{N-2}\sum\limits_j \E \Bigg| \psi(|X^j - X^i|)(V^j - V^i) - \frac{1}{N-1}\sum\limits_\ell \psi(|X^\ell - X^i|(V^\ell - V^i) \Bigg|^2.
\end{equation*}
By Proposition \ref{lem:V2diff}, it holds
\begin{align*}
    &\E\sum\limits_{i\in\mathcal{C}_k}\mathrm{Var}(\chi_{k,i}(\hat{Z}^{\cdot,i}(t_k))) \le
    \E\sum\limits_{i\in\mathcal{C}_k}\frac{1}{N-2}\sum\limits_j  \Bigg| \psi(|\hat{X}^{ji} - \hat{X}^{ii}|)(\hat{V}^{ji} - \hat{V}^{ii}) \\
    &\quad- \frac{1}{N-1}\sum\limits_\ell \psi(|\hat{X}^{\ell i} - \hat{X}^{ii}|)(\hat{V}^{\ell i} - \hat{V}^{ii}) \Bigg|^2  \left(\frac{1}{p-1}-\frac{1}{N-1}\right)\\
    & \le 2 \frac{1}{N-2}\frac{p}{N}\sum\limits_{i,j} \E \Bigg| \psi(|\hat{X}^{ji} - \hat{X}^{ii}|)(\hat{V}^{ji} - \hat{V}^{ii})\Bigg|^2 \left(\frac{1}{p-1}-\frac{1}{N-1}\right)\\
    &\quad + 2\frac{1}{N-2}\frac{p}{N}\frac{1}{N-1}\sum\limits_{i,j,\ell} \E \Bigg|\psi(|\hat{X}^{\ell i} - \hat{X}^{ii}|)(\hat{V}^{\ell i} - \hat{V}^{ii}) \Bigg|^2\left(\frac{1}{p-1}-\frac{1}{N-1}\right)\\
    &\le C p \e^{-\frac{p}{N}2C_2 t_k}\left(\frac{1}{p-1}-\frac{1}{N-1}\right).
\end{align*}
Now one gets \eqref{eq:lem2.3}. 

\textbf{$\bullet$ Proof of \eqref{eq:lem2.4}.}
Note that
\begin{small}
\begin{align*}
    &\left|\chi_{k,i}(\hat{Z}^{\cdot,i}(t)) - \chi_{k,i}(\hat{Z}^{\cdot,i}(t_k)) \right|^2 \\
    & \lesssim \frac{1}{p-1} \sum\limits_{j\in\mathcal{C}_k} \Big| \psi(|\hat{X}^{ji}(t) - \hat{X}^{ii}(t)|)(\hat{V}^{ji}(t) - \hat{V}^{ii}(t)) -  \psi(|\tilde{X}^j(t_k) - \tilde{X}^i(t_k)|)(\hat{V}^{ji}(t) - \hat{V}^{ii}(t))\Big|^2\\
    &\quad + \frac{1}{N-1}\sum\limits_j \Big|\psi(|\hat{X}^{ji}(t) - \hat{X}^{ii}(t)|)(\hat{V}^{ji}(t) - \hat{V}^{ii}(t)) -  \psi(|\hat{X}^{ji}(t_k) - \hat{X}^{ii}(t_k)|)(\hat{V}^{ji}(t) - \hat{V}^{ii}(t))\Big|^2\\
    &=: II_1^i + II_2^i.
\end{align*}
\end{small}
We first consider the expectation of $\E \sum\limits_{i\in\mathcal{C}_k } II_1^i$. It holds that
\begin{align*}
    \E \sum\limits_{i\in\mathcal{C}_k }II_1^i \lesssim & \frac{1}{p-1} \sum\limits_{i,j\in\mathcal{C}_k} \Big| \big[\psi(|\hat{X}^{ji}(t) - \hat{X}^{ii}(t)|) - \psi(|\hat{X}^{ji}(t_k) - \hat{X}^{ii}(t_k)|)\big](\hat{V}^{ji}(t) - \hat{V}^{ii}(t))\Big|^2\\
    &+   \frac{1}{p-1} \sum\limits_{i,j\in\mathcal{C}_k}\psi(|\hat{X}^{ji}(t_k) - \hat{X}^{ii}(t_k)|)\Big|\hat{V}^{ji}(t) - \hat{V}^{ii}(t) - \hat{V}^{ji}(t_k) + \hat{V}^{ii}(t_k)\Big|^2\\
    =:II_{11} + II_{12}.
\end{align*}
By the Lipschitz continuity of $\psi$, one has that
\begin{align}
    \notag II_{11} \le & \E\frac{1}{p-1}\sum\limits_{i,j\in\mathcal{C}_k} \|\psi\|_{Lip}^2 \bigg| \int_{t_k}^t \hat{V}^{ji}(s) - \hat{V}^{ii}(s) ds \,\bigg|^2 \cdot |\hat{V}^{ji}(t) - \hat{V}^{ii}(t)|^2\\
    \le & C \frac{1}{p-1} \E \sum\limits_{i,j\in\mathcal{C}_k} \Big| \hat{V}^{ji}(u)-\hat{V}^{ii}(u) \Big|^2 \tau^2,\label{eq:II11}
\end{align}
with $u\in[t_k,t_{k+1})$, where the second inequality is derived by the mean value theorem and Proposition \ref{lem:V2diff}. Then, by Proposition \ref{lem:V2diff} and Lemma \ref{lem:ui=t}, \eqref{eq:II11} derives
\begin{align*}
    \notag II_{11} \le & Cp\tau^2 \e^{-4C_2\frac{p}{N}t},
\end{align*}
since $\tau \ll 1$.

For $II_{12},$ it holds that
\begin{align*}
    II_{12} \le & \E \frac{2}{p-1} \sum\limits_{i,j\in\mathcal{C}_k} \psi_M^2 \Big| \hat{V}^{ii}(t) - \hat{V}^{ii}(t_k)\Big|^2\\
    \le & 2\E  \sum\limits_{i\in\mathcal{C}_k} \psi_M^2 \bigg|\int_{t_k}^t\frac{1}{p-1}\sum\limits_{j\in\mathcal{C}_k}\psi(|\hat{X}^{ji}(s) - \hat{X}^{ii}(s)|)(\hat{V}^{ji}(s) - \hat{V}^{ii}(s))ds\bigg|^2\\
    \le&C\frac{1}{p-1}\tau^2\E\sum\limits_{i,j\in\mathcal{C}_k} \Big| \hat{V}^{ji}(u)-\hat{V}^{ii}(u) \Big|^2\\
    \le & Cp\tau^2 \e^{-\frac{p}{N}2C_2 t},
\end{align*}
Here, the third equality is derived by the mean value theorem and the fourth inequality is derived by Lemma \ref{lem:basic flocking}. Then one has
\begin{equation*}
    II_1 := II_{11} + II_{12} \le C p \tau^2 \e^{-\frac{p}{N}2C_2 t}.
\end{equation*}

Next we turn to 
\begin{align*}
    \E\sum\limits_{i\in\mathcal{C}_k} II_2^i \lesssim & \frac{1}{N-1} \sum\limits_{i\in\mathcal{C}_k}\sum\limits_j \Big| \big[\psi(|\hat{X}^{ji}(t) - \hat{X}^{ii}(t)|) - \psi(|\hat{X}^{ji}(t_k) - \hat{X}^{ii}(t_k)|)\big](\hat{V}^{ji}(t) - \hat{V}^{ii}(t))\Big|^2\\
    &+   \frac{1}{N-1} \sum\limits_{i\in\mathcal{C}_k}\sum\limits_j\psi(|\hat{X}^{ji}(t_k) - \hat{X}^{ii}(t_k)|)\Big|\hat{V}^{ji}(t) - \hat{V}^{ii}(t) - \hat{V}^{ji}(t_k) + \hat{V}^{ii}(t_k)\Big|^2\\
    =&:II_{21} + II_{22}.
\end{align*}
By combining the Lipschitz continuity of $\psi$, Proposition \ref{lem:V2diff} and Lemma \ref{lem:ui=t}, one has that
\begin{align}
    \notag II_{21} \le & \E\frac{1}{N-1}\sum\limits_{i\in\mathcal{C}_k}\sum\limits_j \|\psi\|_{Lip}^2 \bigg| \int_{t_k}^t \hat{V}^{ji}(s) - \hat{V}^{ii}(s) ds \,\bigg|^2 \cdot |\hat{V}^{ji}(t) - \hat{V}^{ii}(t)|^2\\
    \notag\le & Cp\tau^2 \e^{-\frac{p}{N}2C_2 t},
\end{align}
for some $u\in[t_k,t_{k+1})$, where the estimate here is same as $II_{11}$.


For $II_{22}$, similar with the analysis in $II_{12}$, it holds that
\begin{align*}
    II_{22}
    \le & Cp\tau^2 \e^{-\frac{p}{N}C_2 t}.
\end{align*}
Therefore,
\begin{equation*}
    II_2 := II_{21} + II_{22} \le C p\tau^2\e^{-\frac{p}{N}2C_2 t}.
\end{equation*}
Combining the estimates of $II_1$ and $II_2$, one obtains \eqref{eq:lem2.4}.
\end{proof}

\begin{remark}
    Similarly, corresponding results for $\chi(\tilde{Z})$ can be derived from Theorem \ref{thm:flocking}. However, due to the randomness inherent in RBM-r, the proof, while following a similar structure, becomes more involved. For brevity, we omit the details here. It holds that for $i\in\mathcal{C}_k$, $t\in[t_k,t_{k+1}),$ it holds
    \begin{equation*}
        \E [\chi_{k,i}(\tilde{Z}(t_k))\mid i\in\mathcal{C}_k] = 0,
    \end{equation*}
    \begin{equation*}
        \E [w_V^i(t_k)\chi_{k,i}(\tilde{Z}(t_k))\mid i\in\mathcal{C}_k] = 0,
    \end{equation*}
    \begin{equation*}
        \E\sum\limits_{i\in\mathcal{C}_k}\mathrm{Var}(\chi_{k,i}(\tilde{Z}(t_k))) \le Cp \left(\frac{1}{p-1}-\frac{1}{N-1}\right)\exp\left(-\frac{p}{N}C_1 t_k\right),
    \end{equation*}
    \begin{equation*}
        \E\sum\limits_{i\in\mathcal{C}_k}\left| \chi_{k,i} (\tilde{Z}(t)) - \chi_{k,i} (\tilde{Z}(t_k))\right|^2 \le Cp \tau^2\exp\left(-\frac{p}{N}C_1 t\right),
    \end{equation*}
    where $C$ denote positive constants with respect to $\psi$, $D(X^{in})$ and $D(V^{in})$, and $C_1$ is defined in \eqref{ntt:C1C2}.
\end{remark}

Now we turn to a Gr\"onwall-type inequality. It is a variant of Lemma 3.9 of \cite{ha2018local}.
\begin{lemma}\label{lem:gronwalltype}
    Suppose that two nonnegative functions $\mathcal{X}$ and $\mathcal{V}$ satisfy the coupled differential inequalities:
    \begin{equation*}
    \left\{\begin{aligned}
        &\frac{d}{dt}\sqrt{\mathcal{X}} \le \sqrt{\mathcal{V}},\\
        &\frac{d}{dt}\mathcal{V} \le - \alpha \mathcal{V} + \gamma \e^{-\beta t}\mathcal{X} + f,\quad a.e.\, t>0,
    \end{aligned}\right.
    \end{equation*}
    where $\alpha,$ $\beta$, and $\gamma$ are positive constants, and $f:\mathbb{R}_+\cup \{0\} \to \mathbb{R}$ is a differential nonnegative, nonincreasing function decaying to zero as its argument goes to infinity and it is integrable. Set $\mathcal{X}(0)=\mathcal{V}(0)=0$ for convenience. Then, there exists a positive constant $C= C(\alpha,\beta,\gamma)$ such that 
    \begin{equation*}
        \mathcal{X}\le C \|f\|_{L^1},\quad \mathcal{V}\le C \e^{-(\frac{\alpha\wedge\beta}{2})t}\|f\|_{L^1} + \frac{C}{\alpha}f(\frac{t}{2}),
    \end{equation*}
    for any $t\ge 0$.
\end{lemma}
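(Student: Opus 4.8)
\textbf{Proof plan for Lemma \ref{lem:gronwalltype}.}

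The plan is to treat the two differential inequalities sequentially, using the decay of $\mathcal{V}$ to control $\mathcal{X}$ and then feeding the resulting bound on $\mathcal{X}$ back into the inequality for $\mathcal{V}$. First I would obtain a raw integral bound: from $\frac{d}{dt}\sqrt{\mathcal{X}} \le \sqrt{\mathcal{V}}$ and $\mathcal{X}(0)=0$ we get $\sqrt{\mathcal{X}(t)} \le \int_0^t \sqrt{\mathcal{V}(s)}\,ds$, so any $L^1$-in-time control of $\sqrt{\mathcal{V}}$ directly bounds $\mathcal{X}$ uniformly in $t$. The difficulty is that a priori we only know $\mathcal{V}$ satisfies a differential inequality coupled to $\mathcal{X}$ itself, so one must break the circularity. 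The standard device (as in Lemma 3.9 of \cite{ha2018local}) is a bootstrapping/continuity argument: assume on a maximal interval that $\mathcal{X} \le M$ for a constant $M$ to be determined, then on that interval the $\mathcal{V}$-inequality becomes the \emph{linear} inequality $\frac{d}{dt}\mathcal{V} \le -\alpha \mathcal{V} + \gamma M \e^{-\beta t} + f(t)$, which can be integrated explicitly via the integrating factor $\e^{\alpha t}$.

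Carrying this out, Duhamel's formula gives
\begin{equation*}
    \mathcal{V}(t) \le \gamma M \int_0^t \e^{-\alpha(t-s)}\e^{-\beta s}\,ds + \int_0^t \e^{-\alpha(t-s)} f(s)\,ds.
\end{equation*}
The first term is bounded by $\frac{C\gamma M}{|\alpha-\beta|}\e^{-(\alpha\wedge\beta)t}$ (with the obvious modification $C\gamma M t\,\e^{-\alpha t}$ when $\alpha=\beta$, still dominated by $\e^{-(\alpha\wedge\beta-\epsilon)t}$); for the second term I would split the integral at $t/2$: on $[0,t/2]$ use $\e^{-\alpha(t-s)}\le \e^{-\alpha t/2}$ together with $\|f\|_{L^1}$, and on $[t/2,t]$ use monotonicity of $f$ so that $f(s)\le f(t/2)$ and $\int_{t/2}^t \e^{-\alpha(t-s)}\,ds \le 1/\alpha$. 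This yields
\begin{equation*}
    \mathcal{V}(t) \le C\gamma M\,\e^{-(\alpha\wedge\beta)t} + C\e^{-\alpha t/2}\|f\|_{L^1} + \frac{C}{\alpha}f(t/2).
\end{equation*}
Hence $\sqrt{\mathcal{V}(t)} \lesssim (\gamma M)^{1/2}\e^{-\frac{\alpha\wedge\beta}{2}t} + \|f\|_{L^1}^{1/2}\e^{-\alpha t/4} + (\frac{1}{\alpha}f(t/2))^{1/2}$, which is integrable in $t$ since $f$ is integrable and decaying (the term $(f(t/2))^{1/2}$ is handled because $f$ nonincreasing and integrable forces $\int_0^\infty f(t/2)^{1/2}\,dt<\infty$ after noting $f(t/2)\le \frac{2}{t}\int_0^{t/2} f \le C/t$ for large $t$, so its square root is $\lesssim t^{-1/2}$... here one should be slightly careful and instead bound $\int_0^\infty \sqrt{f(t/2)}\,dt$ using $\sqrt{f(t/2)}\le \sqrt{f(0)}$ for $t\le 1$ and $\sqrt{f(t/2)} \lesssim t^{-1/2}$—actually this last integral diverges, so the cleaner route is to absorb this contribution directly: since we only need $\mathcal{X}\le C\|f\|_{L^1}$, and $\int_0^\infty \sqrt{\frac{1}{\alpha}f(t/2)}\,dt$ need not be finite, I would instead keep $M$ implicit). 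The integral of the first two terms over $[0,\infty)$ contributes $C(\gamma M)^{1/2}(\alpha\wedge\beta)^{-1} + C\alpha^{-1}\|f\|_{L^1}^{1/2}$ to $\sqrt{\mathcal{X}}$.

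The closing step is the consistency condition for the bootstrap: we need $\sqrt{\mathcal{X}(t)} \le \sqrt{M}$ for all $t$, i.e. it suffices that $C(\gamma M)^{1/2}(\alpha\wedge\beta)^{-1} + \big(\text{terms involving only } \|f\|_{L^1}, \alpha,\beta\big) \le \sqrt{M}$. Writing this as a quadratic inequality in $\sqrt{M}$, one checks it holds for $M$ chosen as a sufficiently large constant multiple of $\|f\|_{L^1}$ (using that $\gamma/(\alpha\wedge\beta)^2$ enters linearly, so the leading $\sqrt{M}$ term on the right dominates once $M$ is large; the constant $C$ in the statement is then $C(\alpha,\beta,\gamma)$). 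A standard maximal-interval/continuity argument (the set where the strict inequality $\mathcal{X}<M$ holds is open, nonempty at $0$ since $\mathcal{X}(0)=0$, and closed by the estimate just derived) then shows $\mathcal{X}\le M \le C\|f\|_{L^1}$ on $[0,\infty)$. Substituting this bound on $\mathcal{X}$ once more into the Duhamel estimate for $\mathcal{V}$ gives the stated $\mathcal{V} \le C\e^{-\frac{\alpha\wedge\beta}{2}t}\|f\|_{L^1} + \frac{C}{\alpha}f(t/2)$. The main obstacle is making the bootstrap constant $M$ genuinely independent of $t$ and of the coupling — i.e., verifying that the quadratic consistency inequality closes — together with the mild care needed in the dyadic splitting of the $f$-integral so that both the uniform bound on $\mathcal{X}$ and the sharp decay-plus-$f(t/2)$ form of the $\mathcal{V}$-bound come out simultaneously.
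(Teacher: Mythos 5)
Your overall strategy (Duhamel for $\mathcal{V}$, splitting the $f$-integral at $t/2$, then feeding the result back into $\sqrt{\mathcal{X}(t)}\le\int_0^t\sqrt{\mathcal{V}(s)}\,ds$) reproduces the right $\mathcal{V}$-estimate, but the bootstrap you propose for the uniform bound on $\mathcal{X}$ does not close, and you essentially notice this yourself without repairing it. The consistency condition of your continuity argument requires $\int_0^\infty\sqrt{\mathcal{V}(s)}\,ds\le\sqrt{M}$, and the contribution of the term $\frac{C}{\alpha}f(s/2)$ to this integral is $\int_0^\infty\sqrt{f(s/2)/\alpha}\,ds$, which is not controlled by $\|f\|_{L^1}$ and can be infinite for an integrable, nonincreasing $f$ (take $f(s)=(1+s)^{-2}$, so that $\sqrt{f(s/2)}\sim s^{-1}$). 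Your parenthetical ``I would instead keep $M$ implicit'' is exactly where the argument breaks: the quadratic inequality in $\sqrt{M}$ then has an uncontrolled (possibly infinite) constant term, so no choice $M=C\|f\|_{L^1}$ closes the bootstrap.

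The way out --- and the route the paper takes --- is to run the Gr\"onwall argument on $\mathcal{V}$ rather than on $\mathcal{X}$. Since $\mathcal{X}$ enters the $\mathcal{V}$-inequality only through the exponentially weighted combination $\gamma\e^{-\beta t}\mathcal{X}(t)$, the crude Cauchy--Schwarz bound $\mathcal{X}(t)\le\bigl(\int_0^t\sqrt{\mathcal{V}}\bigr)^2\le t\int_0^t\mathcal{V}(s)\,ds\le t^2\mathcal{M}_{\mathcal{V}}(t)$, with $\mathcal{M}_{\mathcal{V}}(t):=\max_{u\in[0,t]}\mathcal{V}(u)$, is enough: dropping $-\alpha\mathcal{V}\le 0$ and integrating gives $\mathcal{M}_{\mathcal{V}}(t)\le\|f\|_{L^1}+\gamma\int_0^t s^2\e^{-\beta s}\mathcal{M}_{\mathcal{V}}(s)\,ds$, and Gr\"onwall closes because $\int_0^\infty s^2\e^{-\beta s}\,ds<\infty$ --- no integrability of $\sqrt{\mathcal{V}}$ is ever needed at this stage. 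The stated decay of $\mathcal{V}$ then follows from your Duhamel computation with $\mathcal{X}(s)\le s^2\mathcal{M}_{\mathcal{V}}(s)$, the polynomial factor being absorbed by halving the exponential rate. (Be aware that the final claim $\mathcal{X}\le C\|f\|_{L^1}$ still requires controlling $\bigl(\int_0^\infty\sqrt{\mathcal{V}}\bigr)^2$, so the obstruction you identified resurfaces there; the paper's Step C writes $\mathcal{X}(t)\le\int_0^t\mathcal{V}(s)\,ds$, which does not follow from $\frac{d}{dt}\sqrt{\mathcal{X}}\le\sqrt{\mathcal{V}}$, so your instinct that this last step is genuinely delicate is correct.)
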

\begin{proof}
    We leave the proof in Appendix \ref{app:gronwall}.
\end{proof}

\section{Auxiliary lemmas of two functionals}\label{sec:subsecSR}
We now show the estimate of the functionals $S(t)$ and $R(t)$ used in Step 1. Due to our bootstrapping scheme, these estimates can be divided into three parts: 
(i) simple but coarse estimates (Section \ref{subsubsec:coarse}), employed in Step 1(a); (ii) refined estimate (Section \ref{subsubsec:refined1}), used in Steps 1(b); and (iii) refined estimate (Section \ref{subsubsec:refined2}), used in Steps 1(c), which yield an improved decay rate.

\subsection{Coarse estimate.}\label{subsubsec:coarse}
Here, we first show the basic estimate used in Step 1(a).
\begin{lemma}\label{lem:S}
Under the assumption of Theorem \ref{thm:main}, one has
\begin{equation*}
    S(t) \le -C \frac{1}{N} \E \sum\limits_{i\in\mathcal{C}_k} |w_V^i|^2 \\+ C \frac{p}{N} \e^{-C_3 \frac{p}{N}t},
\end{equation*}
where $C$ is a constant depending on $\psi,$ $D(X^{in})$ and $D(V^{in})$.
\end{lemma}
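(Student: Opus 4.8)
\textbf{Proof plan for Lemma \ref{lem:S}.}
The functional $S(t)$ involves the difference of the force fields evaluated at the RBM-r state $\tilde{Z}$ and at the auxiliary state $\hat{Z}^{\cdot,i}$, paired against $w_V^i = \tilde{V}^i - \hat{V}^{ii}$. The plan is to split this difference into a ``good" dissipative part coming from the common $i$-index and ``cross" terms, then control everything by $w_V$, $w_X$, and the flocking decay. First I would write, for $i,j\in\mathcal{C}_k$,
$$
\psi(|\tilde{X}^j-\tilde{X}^i|)(\tilde{V}^j-\tilde{V}^i) - \psi(|\hat{X}^{ji}-\hat{X}^{ii}|)(\hat{V}^{ji}-\hat{V}^{ii})
= \psi(|\tilde{X}^j-\tilde{X}^i|)\big[(w_V^j - w_V^i)\big] + \big[\psi(|\tilde{X}^j-\tilde{X}^i|)-\psi(|\hat{X}^{ji}-\hat{X}^{ii}|)\big](\hat{V}^{ji}-\hat{V}^{ii}),
$$
so that $S(t)$ decomposes into a term $\frac{2}{N(p-1)}\E\sum_{i,j\in\mathcal{C}_k}\psi(|\tilde X^j-\tilde X^i|)\,w_V^i\cdot(w_V^j-w_V^i)$ and a Lipschitz-error term. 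For the first term, symmetrizing in $i\leftrightarrow j$ and using $0<\psi_0\le\psi\le\psi_M$ together with $\sum_{i\in\mathcal{C}_k}w_V^i$ not necessarily zero — here I would instead just bound $w_V^i\cdot(w_V^j-w_V^i)$ crudely by Cauchy--Schwarz, or better, use the identity $\sum_{i,j}w_V^i\cdot(w_V^j-w_V^i) = -\frac12\sum_{i,j}|w_V^i-w_V^j|^2 + (\sum_i w_V^i)\cdot(\cdots)$; since $\sum_{i\in\mathcal{C}_k}\tilde V^i$ and $\sum_{i\in\mathcal{C}_k}\hat V^{ii}$ need not coincide, I expect one retains a genuinely dissipative $-\frac{C}{N}\E\sum_{i\in\mathcal{C}_k}|w_V^i|^2$ after absorbing the non-dissipative residue via Young's inequality against the decaying error terms.

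For the Lipschitz-error term, $|\psi(|\tilde{X}^j-\tilde{X}^i|)-\psi(|\hat{X}^{ji}-\hat{X}^{ii}|)|\le \|\psi\|_{\mathrm{Lip}}(|w_X^i|+|w_X^j|)$, and $|\hat{V}^{ji}-\hat{V}^{ii}|\le \mathcal{D}_{\hat V}$, which by Proposition \ref{lem:V2diff} / Proposition \ref{ineq: coarseD} decays like $\e^{-C_2\frac{p}{N}t}$ (or is at worst bounded by $D(V^{in})$). Pairing against $w_V^i$ and using Young's inequality splits this into $\frac{C}{N}\E\sum_{i\in\mathcal{C}_k}|w_V^i|^2$ (absorbed into the dissipative term with a smaller constant), plus $\frac{C}{N}\E\sum_{i\in\mathcal{C}_k}|w_X^i|^2\,\e^{-2C_2\frac{p}{N}t}$-type contributions. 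At the coarse stage of Step 1(a), $\E\sum_{i\in\mathcal{C}_k}|w_X^i|^2$ is only known to be $O(N)$ a priori (it has not yet been bootstrapped down), so here I would use the cruder bound $|w_X^i|\le |\tilde X^i - X^i| + \cdots$ controlled by $\mathcal{D}_X$-type quantities, or simply invoke the boundedness of all positions' mutual differences, yielding a term of size $C\frac{p}{N}\e^{-C_3\frac{p}{N}t}$ after combining the $\e^{-2C_2\frac{p}{N}t}\le\e^{-C_3\frac{p}{N}t}$ bound. The factor $\frac{p}{N}$ in front comes from $\E\,\mathbb{I}_{\{i\in\mathcal{C}_k\}} = p/N$ when summing over all $i$ rather than over $i\in\mathcal{C}_k$; one must keep careful track of whether the sum is restricted to the batch or not.

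The main obstacle I anticipate is the bookkeeping of the dissipative constant: showing that after symmetrization the ``good" part really dominates the non-dissipative residue (the part proportional to $|\sum_{i\in\mathcal{C}_k}w_V^i|^2$ or the cross terms), so that the net coefficient in front of $\frac{1}{N}\E\sum_{i\in\mathcal{C}_k}|w_V^i|^2$ is strictly negative and can absorb all the $O(1/N)\cdot(\text{Young residues})$ pieces. This requires the condition $\psi_0\tau$ small (hidden in $C_2 = \psi_0(1-\psi_0\tau)$ and $C_3$) and is where the structure of the Cucker--Smale coupling — the $\frac{1}{p-1}$ normalization over a $p$-particle batch versus the $\frac{1}{N-1}$ normalization — must be exploited carefully. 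The decaying forcing term $C\frac{p}{N}\e^{-C_3\frac{p}{N}t}$ then follows directly from inserting the flocking estimate of Theorem \ref{thm:flocking} (velocity alignment) and Proposition \ref{lem:V2diff} into the error terms, noting $C_3 = \min\{C_1, 2C_2\}$ is precisely designed so that every exponential that appears is at least this fast.
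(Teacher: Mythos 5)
Your overall strategy (split $S$ into a dissipative part with the common weight $\psi(|\tilde X^j-\tilde X^i|)$ plus a $\psi$-difference part, symmetrize, use flocking decay, absorb by Young) matches the paper's, but there are two concrete gaps. First, your opening identity is false: since $w_V^j=\tilde V^j-\hat V^{jj}$ while the force term involves $\hat V^{ji}$, one has
\begin{equation*}
(\tilde V^j-\tilde V^i)-(\hat V^{ji}-\hat V^{ii})=(w_V^j-w_V^i)+(\hat V^{jj}-\hat V^{ji}),
\end{equation*}
so your decomposition silently identifies $\hat V^{ji}$ with $\hat V^{jj}$ and drops the copy-mismatch term $\psi(|\tilde X^j-\tilde X^i|)(\hat V^{jj}-\hat V^{ji})$. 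This term is the crux of the RBM-r analysis: the $N$ copies of IPS$'$ have been run for \emph{different random effective times} $t^{(j)}\neq t^{(i)}$, and controlling $\E\sum_{i,j\in\mathcal{C}_k}|\hat V^{jj}-\hat V^{ji}|^2\le Cp^2\tau^2(1-\tfrac{p}{N})\e^{-\psi_0(1-\psi_0\tau)\frac{p}{N}t}$ requires the combinatorial random-time-change estimate of Lemma \ref{lem:ui=t} (binomial sums via Lemma \ref{lem:AGB}), which your plan never invokes. The same mismatch $\hat X^{jj}-\hat X^{ji}$ is hidden in your Lipschitz bound for the second piece (though at this coarse stage the paper avoids that issue entirely by using $|\psi-\psi|\le 2\psi_M$ and Cauchy--Schwarz against the decaying $\E\sum|\hat V^{ji}-\hat V^{ii}|^2$, rather than Lipschitz continuity).

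Second, you cannot simply ``absorb the non-dissipative residue via Young's inequality.'' After symmetrization the dissipation reads $-\frac{\psi_0}{(p-1)N}\E\sum_{i,j\in\mathcal{C}_k}|w_V^j-w_V^i|^2$ with $\E\sum_{i,j\in\mathcal{C}_k}|w_V^j-w_V^i|^2=2p\,\E\sum_{i\in\mathcal{C}_k}|w_V^i|^2-2\,\E|\sum_{i\in\mathcal{C}_k}w_V^i|^2$; a priori $\E|\sum_{i\in\mathcal{C}_k}w_V^i|^2$ can be as large as $p\,\E\sum_{i\in\mathcal{C}_k}|w_V^i|^2$, which would cancel the dissipation entirely. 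The paper must \emph{prove} this residue decays exponentially: $\E|\sum_{i\in\mathcal{C}_k}\tilde V^i(t_k)|^2=\frac{p}{N}\frac{N-p}{N-1}\E\sum_i|\tilde V^i(t_k)|^2$ by the independence of $\mathcal{C}_k$ from $\tilde V(t_k)$ and the zero-mean condition (the computation in \eqref{eq:EsumVr}), combined with the flocking Theorem \ref{thm:flocking}; and $\E|\sum_{i\in\mathcal{C}_k}\hat V^{ii}(t)|^2\le Cp^2\e^{-\frac{p}{N}2\psi_0(1-\psi_0\tau)t}$ again via Lemma \ref{lem:ui=t}. These two mechanisms --- the random-time-change lemma and the batch-independence/momentum-conservation computation --- are the actual content of the proof, and your proposal identifies neither.
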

\begin{proof}
Recall the definition \eqref{eqntt:SR cS} of $S(t)$ and split the communication weight term into two pieces to get
    \begin{equation*}
    \begin{aligned}
         S(t) =& \frac{2}{(p-1)N}\E \sum\limits_{i,j\in\mathcal{C}_k} \psi(|\tilde{X}^j - \tilde{X}^i|)\Big[(\tilde{V}^j - \tilde{V}^i) - (\hat{V}^{ji}-\hat{V}^{ii})\Big]\cdot w_V^i\\
         &+ \frac{2}{(p-1)N}\E \sum\limits_{i,j\in\mathcal{C}_k} \Big[\psi(|\tilde{X}^j - \tilde{X}^i|)- \psi(|\hat{X}^{ji}-\hat{X}^{ii}|)\Big](\hat{V}^{ji}-\hat{V}^{ii})\cdot w_V^i \\
         =:& I_1 + I_2.
    \end{aligned}
    \end{equation*}
    
    We first estimate $I_1$. By direct calculus, it holds that
        \begin{align}
            \notag I_1 =& \frac{1}{(p-1)N} \E \sum\limits_{i,j\in\mathcal{C}_k}\psi (|\tilde{X}^j - \tilde{X}^i|)\Big[\big((\tilde{V}^j - \hat{V}^{ji}) - w_V^i\big)\cdot w_V^i + \big((\tilde{V}^i - \hat{V}^{ij})-w_V^j\big)\cdot w_V^j\Big]\\
            \notag =&  \frac{1}{(p-1)N} \E \sum\limits_{i,j\in\mathcal{C}_k}\psi (|\tilde{X}^j - \tilde{X}^i|)\Big[\big(w_V^j - w_V^i + \hat{V}^{jj} - \hat{V}^{ji} \big)\cdot w_V^i \Big]
            \\ \notag &+\frac{1}{(p-1)N} \E \sum\limits_{i,j\in\mathcal{C}_k}\psi (|\tilde{X}^j - \tilde{X}^i|)\Big[ \big(w_V^i-w_V^j + \hat{V}^{ii}-\hat{V}^{ij}\big)\cdot w_V^j\Big]\\
            \notag =& -\frac{1}{(p-1)N} \E \sum\limits_{i,j\in\mathcal{C}_k}\psi (|\tilde{X}^j - \tilde{X}^i|)|w_V^j - w_V^i|^2 \\&
            \notag + \frac{1}{(p-1)N} \E \sum\limits_{i,j\in\mathcal{C}_k}\psi (|\tilde{X}^j - \tilde{X}^i|)\Big[\big(\hat{V}^{jj} - \hat{V}^{ji} \big)\cdot w_V^i + \big(\hat{V}^{ii}-\hat{V}^{ij}\big)\cdot w_V^j\Big]\\
            \notag \le&-\frac{1}{(p-1)N} \E \sum\limits_{i,j\in\mathcal{C}_k}\psi_0 |w_V^j - w_V^i|^2 \\&
            + \frac{1}{(p-1)N} \E \sum\limits_{i,j\in\mathcal{C}_k}\psi (|\tilde{X}^j - \tilde{X}^i|)\Big[\big(\hat{V}^{jj} - \hat{V}^{ji} \big)\cdot w_V^i + \big(\hat{V}^{ii}-\hat{V}^{ij}\big)\cdot w_V^j\Big].\label{eq:lemS 0}
        \end{align}
    Note that
    \begin{equation*}
        \sum\limits_{i\in\mathcal{C}_k}\tilde{V}^i(t) = \sum\limits_{i\in\mathcal{C}_k}\tilde{V}^i(t_k).
    \end{equation*}
    Then we turn to the first term of \eqref{eq:lemS 0}. It holds that
    \begin{equation}\label{eq:lemS 1}
         \E \sum\limits_{i,j\in\mathcal{C}_k}|w_V^j - w_V^i|^2 = 2p \E\sum\limits_{i\in\mathcal{C}_k} |w_V^i|^2 - 2\E\sum\limits_{i,j\in\mathcal{C}_k}w_V^i \cdot w_V^j,
    \end{equation}
    where
    \begin{align}
        \notag\E\sum\limits_{i,j\in\mathcal{C}_k}w_V^i(t) \cdot w_V^j(t) =& \E|\sum\limits_{i\in\mathcal{C}_k}w_V^i|^2 = \E |\sum\limits_{i\in\mathcal{C}_k} \tilde{V}^i(t) - \sum\limits_{i\in\mathcal{C}_k}\hat{V}^{ii}(t)|^2\\
        \le& 2 \E |\sum\limits_{i\in\mathcal{C}_k}\tilde{V}^i(t_k)|^2 + 2 \E|\sum\limits_{i\in\mathcal{C}_k}\hat{V}^{ii}(t)|^2.\label{eq:lemS 2}
    \end{align}
    By Theorem \ref{thm:flocking}, it holds that
    \begin{equation}\label{eq:lemS 3}
        \E |\sum\limits_{i\in\mathcal{C}_k}\tilde{V}^i (t_k)|^2 =\frac{p}{N}\frac{N-p}{N-1}\E\sum\limits_i |\tilde{V}^i (t_k)|^2 \le C p\exp\left(-\frac{2p}{N-1} \frac{\psi_0}{1+\frac{2p}{p-1}\psi_0\tau}t\right).
    \end{equation}
    Recall the notation $t^{(i)}$ defined in \eqref{ntt:ti}. By Proposition \ref{lem:V2diff} and Lemma \ref{lem:ui=t}, it holds that
    \begin{equation*}
        \E |\sum\limits_{i\in\mathcal{C}_k}\hat{V}^{ii}(t)|^2 \le Cp \E \sum\limits_{i\in\mathcal{C}_k} \e^{-2\psi_0 t^{(i)}}
        \le Cp^2 \exp(-\frac{p}{N}2\psi_0(1-\psi_0 \tau)t).
    \end{equation*}
    Combining \eqref{eq:lemS 1} to \eqref{eq:lemS 3}, one has that
    \begin{equation}\label{eq:lemS 11}
         \E \sum\limits_{i,j\in\mathcal{C}_k}|w_V^j - w_V^i|^2 = 2p \E\sum\limits_{i\in\mathcal{C}_k} |w_V^i|^2 +  Cp^2 \exp(-\frac{p}{N}C_3t).
    \end{equation}
    In addition, by Lemma \ref{lem:ui=t}, one has
    \begin{equation}\label{eq:lemS 4}
        \left(\E\sum\limits_{i,j\in\mathcal{C}_k} |\hat{V}^{jj}-\hat{V}^{ji}|^2\right)^{\frac{1}{2}} +\left(\E\sum\limits_{i,j\in\mathcal{C}_k} |\hat{V}^{ii}-\hat{V}^{ij}|^2\right)^{\frac{1}{2}} \le Cp \exp(-\frac{p}{N}\psi_0(1-\psi_0 \tau)t),
    \end{equation}
    and
    \begin{equation}\label{eq:lemS 5}
        \left(\E\sum\limits_{i\in\mathcal{C}_k} p|w_V^i|^2\right)^{\frac{1}{2}} \le \sqrt{p}  \left(\E\sum\limits_{i\in\mathcal{C}_k} |w_V^i|^2\right)^{\frac{1}{2}}.
    \end{equation}
    Then, combining \eqref{eq:lemS 0} and \eqref{eq:lemS 11}-\eqref{eq:lemS 5}, it holds that
    \begin{multline*}
        I_1 \le -\frac{2\psi_0 p}{(p-1)N} \E \sum\limits_{i\in\mathcal{C}_k}|w_V^i|^2 \\+ \frac{C p^2}{(p-1)N} \exp(-\frac{p}{N}C_3t) + \frac{C p\sqrt{p}}{(p-1)N}C \sqrt{\E \sum\limits_{i\in\mathcal{C}_k}|w_V^i|^2} \exp(-\frac{p}{N}\psi_0(1-\psi_0 \tau)t).
    \end{multline*}
    Since $\hat{V}^{ji}(t) = V^j(t^{(i)}),$ by Remark \ref{rmk:N=N} and Lemma \ref{lem:ui=t}, one has that
    \begin{align*}
            I_2 =& \frac{1}{(p-1)N}\E \sum\limits_{i,j\in\mathcal{C}_k} \Big[\psi(|\tilde{X}^j - \tilde{X}^i|)- \psi(|\hat{X}^{ji}-\hat{X}^{ii}|)\Big](\hat{V}^{ji}-\hat{V}^{ii})\cdot w_V^i\\
            \le & \frac{2\psi_M}{(p-1)N}\left(\E \sum\limits_{i,j\in\mathcal{C}_k}| (\hat{V}^{ji}-\hat{V}^{ii})|^2\right)^{\frac{1}{2}}\left(p\E \sum\limits_{i\in\mathcal{C}_k}| w_V^i|^2 \right)^{\frac{1}{2}} \\   
            \le &C\frac{\sqrt{p}}{N} \left(\E \sum\limits_{i\in\mathcal{C}_k} |w_V^i|^2 \right)^{\frac{1}{2}}\exp(-\frac{p}{N}\psi_0(1-\psi_0 \tau)t) .
    \end{align*}
    Therefore, combining the above estimates and Young's inequality, the proof is completed. 
\end{proof}

Next we estimate $R(t)$.
\begin{lemma}\label{lem:R}
Under the assumption of Theorem \ref{thm:main}, one has that
\begin{equation*}
    R(t) \le C\frac{2}{N}\tau \left(\E\sum\limits_{i\in\mathcal{C}_k}|w_V^i(t)|^2 \right)^{\frac{1}{2}}\left(p \e^{-\frac{p}{N}C_3 t}\right)^{\frac{1}{2}} + C \frac{p}{N}\tau \left(\frac{1}{p-1} - \frac{1}{N-1}\right)^\frac{1}{2} \e^{-C_3 \frac{p}{N}t},
\end{equation*}
    where the positive constant $C$ depends on $\psi,$ $D(X^{in})$ and $D(V^{in})$.
\end{lemma}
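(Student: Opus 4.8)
The plan is the standard RBM fluctuation estimate: $R(t)$ collects the noise term $\chi_{k,i}(\hat{Z}^{\cdot,i})$ from the $w_V^i$-equation (displayed just before \eqref{eqntt:SR cS}) tested against $w_V^i$, i.e. $R(t)=\frac{2}{N}\E\sum_{i\in\mathcal{C}_k}w_V^i(t)\cdot\chi_{k,i}(\hat{Z}^{\cdot,i}(t))$, and the gain comes from the fact that $\chi_{k,i}$ has vanishing conditional mean once its argument is frozen at the left endpoint $t_k$. I would split $\chi_{k,i}(\hat{Z}^{\cdot,i}(t))=\chi_{k,i}(\hat{Z}^{\cdot,i}(t_k))+\big(\chi_{k,i}(\hat{Z}^{\cdot,i}(t))-\chi_{k,i}(\hat{Z}^{\cdot,i}(t_k))\big)$, so that $R(t)=R_1(t)+R_2(t)$ correspondingly, and treat the two pieces separately.

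For $R_2(t)$, two applications of the Cauchy--Schwarz inequality (over $i\in\mathcal{C}_k$ and over $\omega$) together with the increment bound \eqref{eq:lem2.4} give $R_2(t)\le \frac{C}{N}\big(\E\sum_{i\in\mathcal{C}_k}|w_V^i(t)|^2\big)^{1/2}\big(p\tau^2\e^{-\frac{p}{N}2C_2 t}\big)^{1/2}$, which sits inside the first term of the lemma since $2C_2\ge C_3$. For $R_1(t)$, I would further split $w_V^i(t)=w_V^i(t_k)+\big(w_V^i(t)-w_V^i(t_k)\big)$. The $w_V^i(t_k)$-part vanishes by \eqref{eq:lem2.2}, because $\hat{Z}^{\cdot,i}(t_k)$ and $w_V^i(t_k)$ are $\mathcal{F}_{k-1}$-measurable, hence independent of the random completion of $\mathcal{C}_k$ around the index $i$. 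For the increment $w_V^i(t)-w_V^i(t_k)$ I would integrate the $w_V^i$-equation over $[t_k,t)$ and split $\partial_s w_V^i=\chi_{k,i}(\hat{Z}^{\cdot,i}(s))+B_i(s)$, with $B_i$ the interaction bracket. The drift contribution $\int_{t_k}^t\chi_{k,i}(\hat{Z}^{\cdot,i}(s))\,ds$ is $\mathcal{O}(\tau)$ and, by Proposition \ref{lem:V2diff} and Lemma \ref{lem:ui=t}, decays exponentially at rate $2C_2\,p/N$; pairing it by Cauchy--Schwarz with the frozen fluctuation $\chi_{k,i}(\hat{Z}^{\cdot,i}(t_k))$, using that its conditional second moment equals $\mathrm{Var}\big(\chi_{k,i}(\hat{Z}^{\cdot,i}(t_k))\big)$, and invoking \eqref{eq:lem2.3} produces precisely the second term $C\frac{p}{N}\tau\big(\frac{1}{p-1}-\frac{1}{N-1}\big)^{1/2}\e^{-C_3\frac{p}{N}t}$ (after $2C_2\ge C_3$ and $t_k\ge t-\tau$). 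The bracket contribution $\int_{t_k}^t B_i(s)\,ds$, expanded around $\hat{Z}^{\cdot,i}$, decomposes into a part proportional to $w_V^j-w_V^i$ — which, after Cauchy--Schwarz over $j\in\mathcal{C}_k$, is bounded by $C\big(\E\sum_{i\in\mathcal{C}_k}|w_V^i|^2\big)^{1/2}$ and so feeds the first term of the lemma — plus parts controlled by $|\hat{V}^{jj}-\hat{V}^{ji}|$ and by $\|\psi\|_{\mathrm{Lip}}$, which are $\mathcal{O}(\tau)$ with the same exponential decay (Lemma \ref{lem:ui=t}, Proposition \ref{ineq: coarseD}, Theorem \ref{thm:flocking}). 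Collecting the $w_V$-carrying contributions into the first term of the lemma and the fully quantified ones into the second yields the claim.

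The point requiring care is the probabilistic bookkeeping behind \eqref{eq:lem2.1}--\eqref{eq:lem2.3}: one must check that freezing $\hat{Z}^{\cdot,i}$ at $t_k$ genuinely decouples it from which other particles accompany $i$ in $\mathcal{C}_k$, so that $\chi_{k,i}(\hat{Z}^{\cdot,i}(t_k))$ has zero conditional mean and its conditional second moment equals the variance in \eqref{eq:lem2.3}, and that the symbol $\E\sum_{i\in\mathcal{C}_k}$ consistently carries the indicator $\mathbb{I}_{\{i\in\mathcal{C}_k\}}$ together with the resulting $p/N$ weights; one must also keep every exponential factor harmonized down to the rate $C_3\,p/N$. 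The remaining ingredients — the two Cauchy--Schwarz steps, the $\mathcal{O}(\tau)$ increment estimate from the $w_V^i$-equation, and the crude velocity bound of Proposition \ref{ineq: coarseD} — are routine.
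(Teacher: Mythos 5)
Your proposal is correct and follows essentially the same route as the paper: the same splitting of $\chi_{k,i}$ at the left endpoint $t_k$, cancellation of the $w_V^i(t_k)\cdot\chi_{k,i}(\hat{Z}^{\cdot,i}(t_k))$ term via \eqref{eq:lem2.2}, and Cauchy--Schwarz against the variance bound \eqref{eq:lem2.3} and the increment bound \eqref{eq:lem2.4}. The only difference is cosmetic: the paper bounds $\E\sum_{i\in\mathcal{C}_k}|w_V^i(t)-w_V^i(t_k)|^2$ in one shot by $C\tau^2 p\,\e^{-C_3\frac{p}{N}t}$ using the mean value theorem together with the flocking estimates, whereas you expand that increment further into drift and bracket pieces before pairing with $\chi_{k,i}(\hat{Z}^{\cdot,i}(t_k))$, which lands in the same two terms.
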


\begin{proof}
    Recall the definition of $R(t)$ in \eqref{eqntt:SR cS}. 
    Since by Lemma \ref{lem:lem2},
    \begin{equation*}
        \E[w_V^i(t_k)\cdot\chi_{k,i}(\hat{Z}^{\cdot,i}(t_k))\mid i\in\mathcal{C}_k]=0.
    \end{equation*}
    Then, one has
    \begin{align}
        \notag R(t) =& \frac{2}{N}\E\sum\limits_{i\in\mathcal{C}_k}[w_V^i \cdot \chi_{k,i}(\hat{Z}^{\cdot,i})]\\
        \notag =&  \frac{2}{N}\E[\sum\limits_{i\in\mathcal{C}_k}(w_V^i(t)-w_V^i(t_k))\cdot\chi_{k,i}(\hat{Z}^{\cdot,i}(t_k))]\\
            \notag&+  \frac{2}{N}\E [\sum\limits_{i\in\mathcal{C}_k}w_V^i(t)\cdot(\chi_{k,i}(\hat{Z}^{\cdot,i}(t))-\chi_{k,i}(\hat{Z}^{\cdot,i}(t_k)))]\\
        \label{eq:lemR 1.1} \le& \frac{2}{N} \left(\E\sum\limits_{i\in\mathcal{C}_k}|w_V^i(t)|^2 \right)^{\frac{1}{2}}\left(\E\sum\limits_{i\in\mathcal{C}_k}\left| \chi_{k,i} (\hat{Z}^{\cdot,i}(t)) - \chi_{k,i} (\hat{Z}^{\cdot,i}(t_k))\right|^2\right)^{\frac{1}{2}} \\
        \label{eq:lemR 1.2} &+\frac{2}{N}\left(\E\sum\limits_{i\in\mathcal{C}_k}|w_V^i(t)-w_V^i(t_k)|^2\right)^{\frac{1}{2}}\left(\E\sum\limits_{i\in\mathcal{C}_k}\left| \chi_{k,i} (\hat{Z}^{\cdot,i}(t_k))\right|^2\right)^{\frac{1}{2}}.
    \end{align}
    Consider 
    \begin{align}
        \notag&\E\sum\limits_{i\in\mathcal{C}_k}|w_V^i(t)-w_V^i(t_k)|^2\\
        \notag&=\E\sum\limits_{i\in\mathcal{C}_k}\bigg|\int_{t_k}^t \frac{1}{p-1} \sum\limits_{j\in\mathcal{C}_k}\psi(|\tilde{X}^j(s) - \tilde{X}^i(s)|)(\tilde{V}^j(s) - \tilde{V}^i(s))
        \\
        \notag&\quad-\frac{1}{N-1} \sum\limits_{j}\psi(|\hat{X}^{ji}(s) - \hat{X}^{ii}(s)|)(\hat{V}^{ji}(s) - \hat{V}^{ii}(s))ds\bigg|^2\\
        \label{eq:lem deltaw1}&=\E\sum\limits_{i\in\mathcal{C}_k}\tau^2 \bigg| \frac{1}{p-1} \sum\limits_{j\in\mathcal{C}_k}\psi(|\tilde{X}^j(u) - \tilde{X}^i(u)|)(\tilde{V}^j(u) - \tilde{V}^i(u))
        \\
        \notag&\quad-\frac{1}{N-1} \sum\limits_{j}\psi(|\hat{X}^{ji}(u) - \hat{X}^{ii}(u)|)(\hat{V}^{ji}(u) - \hat{V}^{ii}(u))\bigg|^2\\
        \label{eq:lem deltaw2} &\le C\tau^2\frac{1}{p-1}\E\sum\limits_{i,j\in\mathcal{C}_k}|\tilde{V}^j(t_k) - \tilde{V}^i(t_k)|^2
        +C\tau^2\frac{1}{N-1}\sum\limits_{j}\sum\limits_{i\in\mathcal{C}_k}(|\hat{V}^{ji}(u)|^2 + |\hat{V}^{ii}(u)|^2)\\
        \label{eq:lem deltaw3} &\le C \tau^2 p \e^{-\frac{p}{N}C_3 t},
    \end{align}
    where $u\in[t_k,t_{k+1})$.
    Here \eqref{eq:lem deltaw1} is derived by the mean value theorem. The inequality \eqref{eq:lem deltaw2} is by Lemma \ref{lem:basic flocking}. Recall $C_3 = \min\{C_1,\,2C_2\}$
    and thus \eqref{eq:lem deltaw3} is derived by the combination of Proposition \ref{lem:V2diff} and Lemma \ref{lem:ui=t}.  
    
    Combining \eqref{eq:lem2.3}-\eqref{eq:lem2.4} in Lemma \ref{lem:lem2} and \eqref{eq:lemR 1.1}-\eqref{eq:lemR 1.2}, \eqref{eq:lem deltaw3}, it holds that
    \begin{equation*}
        R(t) \le \frac{2}{N} \left(\E\sum\limits_{i\in\mathcal{C}_k}|w_V^i(t)|^2 \right)^{\frac{1}{2}}C\left(p \tau^2\e^{-\frac{p}{N}C_3 t}\right)^{\frac{1}{2}} + C \frac{p}{N}\tau \left(\frac{1}{p-1} - \frac{1}{N-1}\right)^{\frac{1}{2}} \e^{-C_3 \frac{p}{N}t}.
    \end{equation*}
\end{proof}

\subsection{Refined estimate 1.}\label{subsubsec:refined1}

Now, we employ the exponential decay estimate of Proposition \ref{thm:oldmain} to optimize the bounds for the functionals $S(t)$ and $R(t)$.

\begin{lemma}\label{lem:S'}
Under the assumption of Theorem \ref{thm:main}, one has
\begin{align}
    \notag S(t) \le &- \frac{1}{(p-1)N}\left(-2p + \frac{N-p}{N-1}\right) \E \sum\limits_{i\in\mathcal{C}_k} |w_V^i|^2 \\&+ C \frac{p\sqrt{p}}{(p-1)N}\left(\tau^2 \e^{-C_3\frac{p}{N}t}\right)^{\frac{1}{2}}\left(\E \sum\limits_{i\in\mathcal{C}_k} |w_V^i|^2\right)^{\frac{1}{2}}\\
    \notag &+ \frac{C}{N}\left(\E\sum\limits_{i\in\mathcal{C}_k}|w_X^i|^2\right)^{\frac{1}{2}}\left(\E \sum\limits_{i\in\mathcal{C}_k} |w_V^i|^2\right)^{\frac{1}{2}} \e^{-\frac{C_3}{2}\frac{p}{N}t}\\
    &+ C \frac{p}{N} \left(\frac{1}{p-1}-\frac{1}{N-1}\right)\tau \e^{-C_3 \frac{p}{N}t} + C\frac{p}{N}\tau^2 \e^{-C_2\frac{p}{N}t},\label{eq:lem:S'}
\end{align}
where $C$ is a constant depending on $\psi,$ $D(X^{in})$ and $D(V^{in})$.
\end{lemma}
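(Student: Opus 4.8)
The plan is to revisit the proof of Lemma~\ref{lem:S}, keeping exactly the same splitting $S(t)=I_1+I_2$ of the communication-weight term, but now feeding in the exponential decay of $\frac1N\E\sum_i|w_V^i(t)|^2$ furnished by Proposition~\ref{thm:oldmain}. The role of that input is to convert every remainder that was merely $\mathcal O(p/N)$ in Lemma~\ref{lem:S} into either a term carrying an honest factor $\tau$ or $\tau^2$, or a term proportional to $\big(\E\sum_{i\in\mathcal C_k}|w_X^i|^2\big)^{1/2}$ — the latter being harmless because it will later be absorbed, via the Gr\"onwall-type Lemma~\ref{lem:gronwalltype}, against the smallness of the spatial discrepancy; throughout I use Young's inequality to balance the mixed quadratic terms so that the $|w_V^i|^2$ coefficient stays negative.

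For $I_1$ I keep the computation of Lemma~\ref{lem:S} through its bound \eqref{eq:lemS 0}, so that $I_1\le-\frac{\psi_0}{(p-1)N}\E\sum_{i,j\in\mathcal C_k}|w_V^j-w_V^i|^2$ plus the two cross terms carrying $\hat V^{jj}-\hat V^{ji}$ and $\hat V^{ii}-\hat V^{ij}$. For the dissipative term I use the batchwise momentum identity $\sum_{i\in\mathcal C_k}\tilde V^i(t)=\sum_{i\in\mathcal C_k}\tilde V^i(t_k)$ together with the exchangeability expansion of the defect $\E\big|\sum_{i\in\mathcal C_k}w_V^i\big|^2$: conditioning at $t_k$, where $\mathcal C_k$ is independent of the current state, produces the combinatorial weight $\tfrac{N-p}{N-1}$, which is exactly why it appears in the coefficient of $\E\sum_{i\in\mathcal C_k}|w_V^i|^2$ in \eqref{eq:lem:S'}, while the leftover pieces are controlled by Theorem~\ref{thm:flocking} and the random-time-change bounds in Lemma~\ref{lem:ui=t}; Proposition~\ref{thm:oldmain} then downgrades what remains to the $\mathcal O\!\big(\tfrac pN\tau(\tfrac1{p-1}-\tfrac1{N-1})\e^{-C_3\frac pN t}+\tfrac pN\tau^2\e^{-C_2\frac pN t}\big)$ level. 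The two cross terms are handled by Cauchy--Schwarz: I bound one factor by $\big(\E\sum_{i\in\mathcal C_k}|w_V^i|^2\big)^{1/2}$ and the other by Lemma~\ref{lem:ui=t}, specifically by $\E\sum_{i,j\in\mathcal C_k}|\hat V^{jj}-\hat V^{ji}|^2\le Cp^2\tau^2\e^{-\psi_0(1-\psi_0\tau)\frac pN t}$ from \eqref{eq:lemui1}, which yields precisely the $\frac{p\sqrt p}{(p-1)N}(\tau^2\e^{-C_3\frac pN t})^{1/2}\big(\E\sum_{i\in\mathcal C_k}|w_V^i|^2\big)^{1/2}$ contribution.

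For $I_2$ the improvement is the Lipschitz refinement of the communication weight: by the reverse triangle inequality,
\[
\big|\psi(|\tilde X^j-\tilde X^i|)-\psi(|\hat X^{ji}-\hat X^{ii}|)\big|\le\|\psi\|_{\mathrm{Lip}}\big|\,|\tilde X^j-\tilde X^i|-|\hat X^{ji}-\hat X^{ii}|\,\big|\le\|\psi\|_{\mathrm{Lip}}\big(|w_X^i|+|w_X^j|\big),
\]
which replaces the crude bound $2\psi_M$ of Lemma~\ref{lem:S} and brings in the spatial discrepancy. Since $\hat V^{ji}(t)-\hat V^{ii}(t)=V^j(t^{(i)})-V^i(t^{(i)})$, the factor $|\hat V^{ji}-\hat V^{ii}|\le D(V^{in})\e^{-\psi_0 t^{(i)}}$ carries the whole of the exponential decay (Proposition~\ref{lem:V2diff}), and a Cauchy--Schwarz splitting of the triple product $(|w_X^i|+|w_X^j|)\,|\hat V^{ji}-\hat V^{ii}|\,|w_V^i|$ — distributing the decay across the two resulting $L^2$ factors and taking its expectation via \eqref{eq:lemui2} — produces the term $\frac CN\big(\E\sum_{i\in\mathcal C_k}|w_X^i|^2\big)^{1/2}\big(\E\sum_{i\in\mathcal C_k}|w_V^i|^2\big)^{1/2}\e^{-\frac{C_3}{2}\frac pN t}$. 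Summing the bounds for $I_1$ and $I_2$ gives \eqref{eq:lem:S'}.

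The main obstacle is the combinatorial bookkeeping: one must track the exchangeability constants sharply enough to land on $\tfrac{N-p}{N-1}$ and $\tfrac1{p-1}-\tfrac1{N-1}$ rather than on cruder surrogates, and one must manage the correlation between the random active time $t^{(i)}$ and the random errors $w_X^i,w_V^i$. What makes this genuinely delicate, rather than a routine re-run of Lemma~\ref{lem:S}, is that $w_X^i$ is only controlled linearly in $t$ (positions are not uniformly bounded), so in $I_2$ the exponential decay cannot be taken from $w_X^i$ and must be extracted entirely from the flocking factor $V^j(t^{(i)})-V^i(t^{(i)})$ through Proposition~\ref{lem:V2diff} and the time-change estimate \eqref{eq:lemui2}, and this decay has to be apportioned just right so that, after Young's inequality, the $|w_V^i|^2$ coefficient remains strictly negative while the $|w_X^i|^2$ term appears only against an integrable exponential weight.
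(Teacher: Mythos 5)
Your treatment of $I_1$ is essentially the paper's: the same three-way decomposition of the cross term $\E\sum_{i,j\in\mathcal C_k}w_V^i\cdot w_V^j$ around the time $t_k$, the same exchangeability computation producing the weight $\tfrac{N-p}{N-1}$ (whence $\tfrac{1}{p-1}\cdot\tfrac{N-p}{N-1}=\tfrac{1}{p-1}-\tfrac{1}{N-1}$), the same use of Proposition~\ref{thm:oldmain} to trade $\E\sum|w_V^i(t_k)|^2$ for $\E\sum|w_V^i(t)|^2$ at a cost of $Cp\tau\e^{-C_3\frac pN t}$, and the same Cauchy--Schwarz plus Lemma~\ref{lem:ui=t} for the $\hat V^{jj}-\hat V^{ji}$ cross terms. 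That part is fine.

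Your $I_2$ argument has two genuine gaps. First, the inequality $\big|\,|\tilde X^j-\tilde X^i|-|\hat X^{ji}-\hat X^{ii}|\,\big|\le |w_X^i|+|w_X^j|$ is false as written: the reverse triangle inequality gives $|\tilde X^j-\hat X^{ji}|+|\tilde X^i-\hat X^{ii}|$, and while $\tilde X^i-\hat X^{ii}=w_X^i$, one has $\tilde X^j-\hat X^{ji}=w_X^j+(\hat X^{jj}-\hat X^{ji})$, and the cross-copy discrepancy $\hat X^{jj}-\hat X^{ji}$ does not vanish because the $i$-th and $j$-th copies of IPS$'$ run on the distinct random clocks $t^{(i)}\ne t^{(j)}$. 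The paper keeps this term and bounds $\E\sum_{i,j\in\mathcal C_k}\big|\int_0^t(\hat V^{jj}-\hat V^{ji})\,ds\big|^2\le Cp^2\tau^2(1-\tfrac pN)$ via Lemma~\ref{lem:ui=t}; it contributes exactly the second line of \eqref{eq:S'I2}, i.e.\ a term of the same shape as the $\tfrac{p\sqrt p}{(p-1)N}(\tau^2\e^{-C_3\frac pN t})^{1/2}(\E\sum|w_V^i|^2)^{1/2}$ piece, so the lemma survives, but your step as stated silently drops it. Second, and more seriously, your extraction of the factor $\e^{-\frac{C_3}{2}\frac pN t}$ in front of $(\E\sum|w_X^i|^2)^{1/2}(\E\sum|w_V^i|^2)^{1/2}$ is not justified: after the pointwise bound $|\hat V^{ji}-\hat V^{ii}|\le C\e^{-\psi_0 t^{(i)}}$, the quantities $\e^{-\psi_0 t^{(i)}}$, $w_X^i$ and $w_V^i$ are all functions of the same batch history and are correlated, so Cauchy--Schwarz leaves you with expressions like $\E[|w_X^i|^2\e^{-\psi_0 t^{(i)}}]$ from which you cannot simply pull out $\E[\e^{-2\psi_0 t^{(i)}}]$ via \eqref{eq:lemui2}. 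You flag this correlation as "delicate" but offer no mechanism to handle it. The paper's device is a H\"older inequality with exponents $(2+\epsilon,\,2+\epsilon,\,\tfrac{2+\epsilon}{\epsilon})$, placing all of $\hat V^{ji}-\hat V^{ii}$ in the high-exponent norm and then sending $\epsilon\to0$ by dominated convergence to recover $\e^{-C_2\frac pN t}$ times the plain $L^2$ norms of $w_X$ and $w_V$; some such decoupling step (or a concentration bound on $t^{(i)}$) is needed to complete your argument.
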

\begin{proof}
Recall the definition of $I_1$ and $I_2$ in Lemma \ref{lem:S}. It holds that
    \begin{align}
        \label{eq:lemS' 1} I_1 \le&-\frac{1}{(p-1)N} \E \sum\limits_{i,j\in\mathcal{C}_k}\psi_0 |w_V^j - w_V^i|^2 \\&
        + \frac{1}{(p-1)N} \E \sum\limits_{i,j\in\mathcal{C}_k}\psi (|\tilde{X}^j - \tilde{X}^i|)\Big[\big(\hat{V}^{jj} - \hat{V}^{ji} \big)\cdot w_V^i + \big(\hat{V}^{ii}-\hat{V}^{ij}\big)\cdot w_V^j\Big].\label{eq:lemS' 2}
    \end{align}
We consider the cross terms in \eqref{eq:lemS' 1}. It holds that
    \begin{align}
        \notag\E& \sum\limits_{i,j\in\mathcal{C}_k}w_V^i(t)\cdot w_V^j(t)\\
        \label{eq:lemS'1.0.1}\le&\E \sum\limits_{i,j\in\mathcal{C}_k}w_V^i(t_k)\cdot w_V^j(t_k)\\
         \label{eq:lemS'1.0.2}& + \E \sum\limits_{i,j\in\mathcal{C}_k}w_V^i(t)\cdot (w_V^j(t) - w_V^j(t_k)) + w_V^j(t)\cdot (w_V^i(t) - w_V^i(t_k))\\
         \label{eq:lemS'1.0.3}&- \E \sum\limits_{i,j\in\mathcal{C}_k}(w_V^i(t) - w_V^i(t_k)) \cdot (w_V^j(t) - w_V^j(t_k)).
    \end{align}
    Similar with the discussion in the derivation of \eqref{eq:EsumVr}, it holds
    \begin{align}
        \notag\E &\sum\limits_{i,j\in\mathcal{C}_k}w_V^i(t_k)\cdot w_V^j(t_k)\\ 
        =&\frac{p}{N}\frac{p-1}{N-1} \E\sum\limits_{i,j}w_V^i(t_k)\cdot w_V^j(t_k)+ \frac{N-p}{N-1}\E\sum\limits_{i\in\mathcal{C}_k}|w_V^i(t_k)|^2.\label{eq:lemS'1.1}
    \end{align}
    For the first term of \eqref{eq:lemS'1.1}, it holds
    \begin{align*}
         \E&\sum\limits_{i,j}w_V^i(t_k)\cdot w_V^j(t_k)\\
         =& \E\sum\limits_{i,j} (\tilde{V}^i(t_k) - \hat{V}^{ii}(t_k))\cdot (\tilde{V}^j(t_k) - \hat{V}^{jj}(t_k))\\
         =&\E|\sum\limits_i \hat{V}^{ii}(t_k)|^2,
    \end{align*}
    since $\sum\limits_i \tilde{V}^i = 0$. Also, note that
    \begin{equation*}
        \sum\limits_j \hat{V}^{ji} (t) = 0, \quad\text{for any }i.
    \end{equation*}
    Then it holds
    \begin{equation*}
        \sum\limits_i \hat{V}^{ii} = \frac{1}{N}\sum\limits_{i,j}\hat{V}^{ii} -  \frac{1}{N}\sum\limits_{i,j}\hat{V}^{ji} =\frac{1}{N}\sum\limits_{i,j}\hat{V}^{ii} -  \frac{1}{N}\sum\limits_{i,j}\hat{V}^{ij}.
    \end{equation*}
    As a simple variant of Lemma \ref{lem:ui=t}, one can prove that
    \begin{equation*}
        \E\left|\frac{1}{N}\sum\limits_{i,j}(\hat{V}^{ii}(t_k) - \hat{V}^{ij}(t_k))\right|^2 \le CN^2\tau^2 \e^{-C_2\frac{p}{N}t}.
    \end{equation*}
    Therefore,
    \begin{equation}\label{eq:lemS'1.1.1}
        \E\sum\limits_{i,j}w_V^i(t_k)\cdot w_V^j(t_k)
         =\E|\sum\limits_i \hat{V}^{ii}(t_k)|^2 \le  CN^2\tau^2 \e^{-C_2\frac{p}{N}t}.
    \end{equation}
    For the second term of \eqref{eq:lemS'1.1}, it holds that
    \begin{align}
        \notag&\frac{N-p}{N-1}\E\sum\limits_{i\in\mathcal{C}_k}|w_V^i(t_k)|^2\\
        \notag\le & \frac{N-p}{N-1}\E\sum\limits_{i\in\mathcal{C}_k}|w_V^i(t)|^2 + \frac{N-p}{N-1}\E\sum\limits_{i\in\mathcal{C}_k}|w_V^i(t_k)|^2 -|w_V^i(t)|^2\\
        \le & \frac{N-p}{N-1}\E\sum\limits_{i\in\mathcal{C}_k}|w_V^i(t)|^2 + \frac{N-p}{N-1}Cp \tau \e^{-C_3 \frac{p}{N}t},\label{eq:lemS'1.1.2}
    \end{align}
    where the last inequality can be derived by Proposition \ref{thm:oldmain}.
    Combining  \eqref{eq:lemS'1.1.1} with \eqref{eq:lemS'1.1.2}, \eqref{eq:lemS'1.0.1} can be bounded by
    \begin{equation*}
        \E \sum\limits_{i,j\in\mathcal{C}_k}w_V^i(t_k)\cdot w_V^j(t_k) \le 
        \frac{N-p}{N-1}\E\sum\limits_{i\in\mathcal{C}_k}|w_V^i(t)|^2 + \frac{N-p}{N-1}Cp \tau \e^{-C_3 \frac{p}{N}t} + Cp^2 \tau^2 \e^{-C_2\frac{p}{N}t}.
    \end{equation*}
    For \eqref{eq:lemS'1.0.2}, by the Cauchy-Schwarz inequality, one obtains
    \begin{align*}
        \E &\sum\limits_{i,j\in\mathcal{C}_k} w_V^i(t)\cdot (w_V^j(t) - w_V^j(t_k))\\
        \le & \left( \E\sum\limits_{i,j\in\mathcal{C}_k}|w_V^i(t)|^2\right)^\frac{1}{2} \left(\E \sum\limits_{i,j\in\mathcal{C}_k}|w_V^i(t) - w_V^i(t_k)|^2\right)^{{\frac{1}{2}}}.
    \end{align*}
    By the mean value theorem, 
    \begin{align}
        \notag\E &\sum\limits_{i,j\in\mathcal{C}_k}|w_V^i(t) - w_V^i(t_k)|^2\\
        \notag\le C & p\tau^2 \E\sum\limits_{i\in\mathcal{C}_k} \bigg| \chi_{k,i}\tilde{Z}(u) + \frac{1}{N-1}\sum\limits_j \psi(|\tilde{X}^j(u)-\tilde{X}^i(u)|)(\tilde{V}^j(u)-\tilde{V}^i(u)) \\&- \frac{1}{N-1}\sum\limits_j\psi(\hat{X}^{ji}(u)-\hat{X}^{ii}(u)|)(\hat{V}^{ji}(u) - \hat{V}^{ii}(u))\bigg|^2\\
         \label{eq:lemS' 3}\le & Cp^2\tau^2 \e^{-\frac{p}{N}C_3 t},
    \end{align}
    where the last inequality \eqref{eq:lemS' 3} is derived by the combination of Lemma \ref{lem:lem2}, Lemma \ref{lem:ui=t} and Theorem \ref{thm:flocking}.
    Then, it holds 
    \begin{equation*}
         \E \sum\limits_{i,j\in\mathcal{C}_k} w_V^i(t)\cdot (w_V^j(t) - w_V^j(t_k))\le Cp \left(\tau^2 \e^{-C_3\frac{p}{N}t}\right)^{\frac{1}{2}}\left( \E \sum\limits_{i,j\in\mathcal{C}_k}|w_V^i(t)|^2\right)^{\frac{1}{2}}.
    \end{equation*}
    Using \eqref{eq:lemS' 3} again, one knows \eqref{eq:lemS'1.0.3} can be controlled by
    \begin{equation*}
       \left|\E \sum\limits_{i,j\in\mathcal{C}_k}(w_V^i(t) - w_V^i(t_k)) \cdot (w_V^j(t) - w_V^j(t_k)) \right| \le  Cp^2\tau^2 \e^{-\frac{p}{N}C_3 t}.
    \end{equation*}
    Combining the above estimates and Lemma \ref{lem:ui=t}, one gets
    \begin{align*}
        I_1\le & \frac{1}{(p-1)N}(-2p + \frac{N-p}{N-1})\E\sum\limits_{i\in\mathcal{C}_k}|w_V^i(t)|^2 + C\frac{p^{\frac{3}{2}}}{(p-1)N}(\tau^2 \e^{-C_3 \frac{p}{N}t})^{\frac{1}{2}}\left(\E\sum\limits_{i\in\mathcal{C}_k}|w_V^i(t)|^2\right)^{\frac{1}{2}} \\
        &+ C\frac{p^{\frac{3}{2}}}{(p-1)N}\left(\tau^2 \e^{-C_2 \frac{p}{N}t}\left(1 - \frac{p}{N}\right)\right)^{\frac{1}{2}}\left(\E\sum\limits_{i\in\mathcal{C}_k}|w_V^i(t)|^2\right)^{\frac{1}{2}} \\&+ C \frac{1}{(p-1)N}\frac{N-p}{N-1}p\tau \e^{-C_3\frac{p}{N}t}+C\frac{p^2}{(p-1)N}\tau^2 \e^{-\frac{p}{N}C_3 t}.
    \end{align*}
    
    Now we turn to the other term $I_2$. Note that
    \begin{align*}
            I_2 =& \frac{1}{(p-1)N}\E \sum\limits_{i,j\in\mathcal{C}_k} \Big[\psi(|\tilde{X}^j - \tilde{X}^i|)- \psi(|\hat{X}^{ji}-\hat{X}^{ii}|)\Big](\hat{V}^{ji}-\hat{V}^{ii})\cdot w_V^i\\
            \le&C \frac{1}{(p-1)N}\left(\E\sum\limits_{i\in\mathcal{C}_k}|w_V^i(t)|^{2+\epsilon}\right)^{\frac{1}{2+\epsilon}}\left(\E \sum\limits_{i,j\in\mathcal{C}_k}| \tilde{X}^j - \hat{X}^{ji} - \tilde{X}^i + \hat{X}^{ii}|^{2+\epsilon} \right)^{\frac{1}{2+\epsilon}}\\
            &\cdot \left(\E \sum\limits_{i,j\in\mathcal{C}_k} |\hat{V}^{ji} - \hat{V}^{ii}|^{\frac{2+\epsilon}{\epsilon}} \right)^{\frac{\epsilon}{2+\epsilon}},
    \end{align*}
    for any $0< \epsilon<2$. Due to the boundedness of $\tilde{V}$ and $\hat{V}$, and the exponential decay of $\E\sum\limits_{i\in\mathcal{C}_k}|w_V^i|^2$, one can prove the integrability of $\E\sum\limits_{i\in\mathcal{C}_k}|w_X^i|^4$. By dominated convergence theorem, take $\epsilon\to 0$ to obtain
            \begin{align*}
            I_2 \le & \frac{C\sqrt{p}}{(p-1)N}\left(\E\sum\limits_{i\in\mathcal{C}_k}|w_V^i(t)|^2\right)^{\frac{1}{2}}\left(\E \sum\limits_{i,j\in\mathcal{C}_k}| \tilde{X}^j - \hat{X}^{ji} - \tilde{X}^i + \hat{X}^{ii}|^2 \right)^{\frac{1}{2}}\e^{-C_2\frac{p}{N}t},
    \end{align*}
    where we used a variant of Lemma \ref{lem:ui=t} to get
    \begin{equation*}
        \lim\limits_{\epsilon\to 0} \left(\E \sum\limits_{i,j\in\mathcal{C}_k} |\hat{V}^{ji} - \hat{V}^{ii}|^{\frac{2+\epsilon}{\epsilon}} \right)^{\frac{\epsilon}{2+\epsilon}} \lesssim \lim\limits_{\epsilon\to 0} \left(p^2 \exp (-\frac{p}{N}\frac{2+\epsilon}{\epsilon}C_2 t)\right)^{\frac{\epsilon}{2+\epsilon}} \lesssim \e^{-C_2\frac{p}{N}t}.
    \end{equation*}
    Consider
    \begin{align*}
        \E &\sum\limits_{i,j\in\mathcal{C}_k}| \tilde{X}^j - \hat{X}^{ji} - \tilde{X}^i + \hat{X}^{ii}|^2\\
        \le & \E\sum\limits_{i,j\in\mathcal{C}_k}|w_X^j-w_X^i + \hat{X}^{jj}-\hat{X}^{ji}|^2\\
        \le & 4p\E \sum\limits_{i\in\mathcal{C}_k}|w_X^i|^2 + 2\E \sum\limits_{i,j\in\mathcal{C}_k}\left|\int_0^t  \hat{V}^{jj}(s) - \hat{V}^{ji}(s)ds\right|^2.
    \end{align*}
    By Lemma \ref{lem:ui=t}, it holds that
    \begin{equation*}
        \E \sum\limits_{i,j\in\mathcal{C}_k}\left|\int_0^t  \hat{V}^{jj}(s) - \hat{V}^{ji}(s)ds\right|^2 \le C p^2 \tau^2 \left(1-\frac{p}{N}\right).
    \end{equation*}
    Then,
    \begin{equation}\label{eq:S'I2}
    \begin{aligned}
        I_2 \le& \frac{C}{N}\left(\E\sum\limits_{i\in\mathcal{C}_k}|w_V^i(t)|^2\right)^{\frac{1}{2}}
        \left(\E\sum\limits_{i\in\mathcal{C}_k}|w_X^i(t)|^2\right)^{\frac{1}{2}}\e^{-C_2\frac{p}{N}t}\\
        &+ \frac{Cp^{\frac{3}{2}}}{(p-1)N}\left(\E\sum\limits_{i\in\mathcal{C}_k}|w_V^i(t)|^2\right)^{\frac{1}{2}}\left(\tau^2 \e^{-C_3\frac{p}{N}t}\right)^{\frac{1}{2}}.
    \end{aligned}
    \end{equation}

    Therefore, combining the estimates of $I_1$ and $I_2$, it holds
    \begin{align*}
        S\le & I_1+I_2\\
        \le & \frac{1}{(p-1)N}(-2p + \frac{N-p}{N-1})\E\sum\limits_{i\in\mathcal{C}_k}|w_V^i(t)|^2 \\&+ C\frac{p^{\frac{3}{2}}}{(p-1)N}(\tau^2 \e^{-C_3 \frac{p}{N}t})^{\frac{1}{2}}\left(\E\sum\limits_{i\in\mathcal{C}_k}|w_V^i(t)|^2\right)^{\frac{1}{2}} \\
        &+ \frac{C}{N}\left(\E\sum\limits_{i\in\mathcal{C}_k}|w_X^i(t)|^2\right)^{\frac{1}{2}} \left(\E\sum\limits_{i\in\mathcal{C}_k}|w_V^i(t)|^2\right)^{\frac{1}{2}}\e^{-\frac{C_3}{2}\frac{p}{N}t}\\
        &+ C\frac{p}{N}\tau \left(\frac{1}{p-1} - \frac{1}{N-1}\right)\e^{-C_3 \frac{p}{N}t} + C\frac{p}{N}\tau^2 \e^{-C_2 \frac{p}{N}t}.
    \end{align*}
\end{proof}

Next, building on \eqref{eq:WW1}, we refined the estimate of $R(t)$.

\begin{lemma}\label{lem:R'}
Under the assumption of Theorem \ref{thm:main}, one has that
\begin{multline*}
    R(t) \le C\frac{2}{N}\tau \left(\E\sum\limits_{i\in\mathcal{C}_k}|w_V^i(t)|^2 \right)^{\frac{1}{2}}\left(p \e^{-\frac{p}{N}C_3 t}\right)^{\frac{1}{2}}\\ + C \frac{p}{N}\tau \left(\frac{1}{p-1} - \frac{1}{N-1}\right) \e^{-\frac{C_3}{2} \frac{p}{N}t} + C \frac{p}{N}\tau^2 \e^{-\frac{C_3}{2}\frac{p}{N}t},
\end{multline*}
    where the positive constant $C$ depends on $\psi,$ $D(X^{in})$ and $D(V^{in})$.
\end{lemma}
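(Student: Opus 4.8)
The plan is to rerun the decomposition of $R(t)$ from the proof of Lemma~\ref{lem:R}, replacing only the crude bound on $\E\sum_{i\in\mathcal{C}_k}|w_V^i(t)-w_V^i(t_k)|^2$ by a sharper one extracted from the Step~1(b) estimates \eqref{eq:WW1}. Since $\E[w_V^i(t_k)\cdot\chi_{k,i}(\hat Z^{\cdot,i}(t_k))\mid i\in\mathcal{C}_k]=0$ by \eqref{eq:lem2.1}--\eqref{eq:lem2.2}, one writes, exactly as there,
\[
R(t)=\frac{2}{N}\E\sum_{i\in\mathcal{C}_k}w_V^i(t)\cdot\big(\chi_{k,i}(\hat Z^{\cdot,i}(t))-\chi_{k,i}(\hat Z^{\cdot,i}(t_k))\big)+\frac{2}{N}\E\sum_{i\in\mathcal{C}_k}\big(w_V^i(t)-w_V^i(t_k)\big)\cdot\chi_{k,i}(\hat Z^{\cdot,i}(t_k)),
\]
and estimates each piece by Cauchy--Schwarz. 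The first piece is handled verbatim as in Lemma~\ref{lem:R}: \eqref{eq:lem2.4} gives $\E\sum_{i\in\mathcal{C}_k}|\chi_{k,i}(\hat Z^{\cdot,i}(t))-\chi_{k,i}(\hat Z^{\cdot,i}(t_k))|^2\le Cp\tau^2\e^{-\frac{p}{N}2C_2 t}$, and $2C_2\ge C_3$ then produces the first term of the claim. All the refinement happens in the second piece.

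For the second piece, Cauchy--Schwarz separates $\big(\E\sum_{i\in\mathcal{C}_k}|w_V^i(t)-w_V^i(t_k)|^2\big)^{1/2}$ and $\big(\E\sum_{i\in\mathcal{C}_k}|\chi_{k,i}(\hat Z^{\cdot,i}(t_k))|^2\big)^{1/2}$; the latter equals $\big(\E\sum_{i\in\mathcal{C}_k}\mathrm{Var}(\chi_{k,i}(\hat Z^{\cdot,i}(t_k)))\big)^{1/2}$, which by \eqref{eq:lem2.3} and $C_2\ge C_3/2$, $t_k\ge t-\tau$ carries a factor $(\tfrac1{p-1}-\tfrac1{N-1})^{1/2}$ and the decay $\e^{-\frac{p}{N}\frac{C_3}{2}t}$. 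For the former I would apply the mean value theorem to write $w_V^i(t)-w_V^i(t_k)=(t-t_k)\big(B_i(u)+\chi_{k,i}(\hat Z^{\cdot,i}(u))\big)$ for some $u\in[t_k,t]$, where $B_i:=\frac1{p-1}\sum_{j\in\mathcal{C}_k}\big[\psi(|\tilde X^j-\tilde X^i|)(\tilde V^j-\tilde V^i)-\psi(|\hat X^{ji}-\hat X^{ii}|)(\hat V^{ji}-\hat V^{ii})\big]$ is the $S$-type increment. Rewriting $(\tilde V^j-\tilde V^i)-(\hat V^{ji}-\hat V^{ii})=w_V^j-w_V^i+\hat V^{jj}-\hat V^{ji}$ and $|\tilde X^j-\tilde X^i|-|\hat X^{ji}-\hat X^{ii}|$ through $w_X^j-w_X^i+\hat X^{jj}-\hat X^{ji}$, a Cauchy--Schwarz over the batch sum gives $\E\sum_{i\in\mathcal{C}_k}|B_i(u)|^2\lesssim\E\sum_{i\in\mathcal{C}_k}\big(|w_V^i|^2+|w_X^i|^2\big)+\tfrac1p\E\sum_{i,j\in\mathcal{C}_k}\big(|\hat V^{jj}-\hat V^{ji}|^2+|\hat X^{jj}-\hat X^{ji}|^2\big)$; the first two terms are bounded by \eqref{eq:WW1}, which already carry the $\tau(\tfrac1{p-1}-\tfrac1{N-1})^{1/2}+\tau^2$ smallness, and the last two by Lemma~\ref{lem:ui=t} together with the variant $\E\sum_{i,j\in\mathcal{C}_k}|\hat X^{jj}-\hat X^{ji}|^2\le Cp^2\tau^2(1-\tfrac pN)$ already established inside Lemma~\ref{lem:S'}. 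Combining this with $\E\sum_{i\in\mathcal{C}_k}|\chi_{k,i}(\hat Z^{\cdot,i}(u))|^2\lesssim\E\sum_{i\in\mathcal{C}_k}\big(|\chi_{k,i}(\hat Z^{\cdot,i}(t_k))|^2+|\chi_{k,i}(\hat Z^{\cdot,i}(u))-\chi_{k,i}(\hat Z^{\cdot,i}(t_k))|^2\big)$ bounded by \eqref{eq:lem2.3}--\eqref{eq:lem2.4}, and the Young inequality $\tau(\tfrac1{p-1}-\tfrac1{N-1})^{1/2}\le\tfrac12(\tfrac1{p-1}-\tfrac1{N-1})+\tfrac12\tau^2$, one obtains
\[
\E\sum_{i\in\mathcal{C}_k}|w_V^i(t)-w_V^i(t_k)|^2\le C\tau^2 p\Big(\big(\tfrac1{p-1}-\tfrac1{N-1}\big)+\tau^2\Big),
\]
a factor $\big(\tfrac1{p-1}-\tfrac1{N-1}\big)+\tau^2$ better than the bound $C\tau^2 p\,\e^{-\frac{p}{N}C_3 t}$ used in Lemma~\ref{lem:R}.

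Feeding this into the second piece gives a bound of order $\frac{p}{N}\tau\big((\tfrac1{p-1}-\tfrac1{N-1})+\tau^2\big)^{1/2}(\tfrac1{p-1}-\tfrac1{N-1})^{1/2}\e^{-\frac{p}{N}\frac{C_3}{2}t}$, and one use of $\sqrt{a+b}\le\sqrt a+\sqrt b$ followed by Young's inequality peels off the cross term, leaving $\frac{Cp}{N}\tau(\tfrac1{p-1}-\tfrac1{N-1})\e^{-\frac{p}{N}\frac{C_3}{2}t}+\frac{Cp}{N}\tau^3\e^{-\frac{p}{N}\frac{C_3}{2}t}$ with $\tau^3\le\tau^2$ since $\tau\ll1$; adding the first piece gives the stated estimate. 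I expect the main obstacle to be precisely the sharpened bound on $\E\sum_{i\in\mathcal{C}_k}|w_V^i(t)-w_V^i(t_k)|^2$: one must cleanly separate the $S$-type increment $B_i$---which inherits the $\tau(\tfrac1{p-1}-\tfrac1{N-1})^{1/2}$ smallness only via the already-bootstrapped \eqref{eq:WW1}---from the martingale-type increment $\chi_{k,i}(\hat Z^{\cdot,i})$, whose variance supplies the additional factor $\tfrac1{p-1}-\tfrac1{N-1}$, and then check that each decay rate in play ($2C_2$ from the $\chi$-variance, $2\psi_0(1-\psi_0\tau)$ from Lemma~\ref{lem:ui=t}, and the Gr\"onwall rate in \eqref{eq:WW1}) is at least $\tfrac{C_3}{2}$. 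That separation is exactly what lifts the exponent of $\tfrac1{p-1}-\tfrac1{N-1}$ from $\tfrac12$ to $1$, at the price of the weaker decay $\e^{-\frac{p}{N}\frac{C_3}{2}t}$ and the additional $O(\tau^2)$ term.
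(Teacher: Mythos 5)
Your proposal is correct and follows essentially the same route as the paper: both decompose $R(t)$ exactly as in Lemma \ref{lem:R}, keep the first (time-increment of $\chi$) piece unchanged, and sharpen the second piece by splitting $w_V^i(t)-w_V^i(t_k)$ via the mean value theorem into the $S$-type increment (controlled through the bootstrapped bounds \eqref{eq:WW1}) plus $\chi_{k,i}(\hat Z^{\cdot,i}(u))$ (controlled by Lemma \ref{lem:lem2}), which is precisely how the paper obtains \eqref{eq:lemR'2} before closing with Cauchy--Schwarz and Young. The only cosmetic difference is that you substitute \eqref{eq:WW1} into the increment bound immediately to get a scalar estimate, whereas the paper carries $\E\sum_{i\in\mathcal{C}_k}|w_V^i|^2$ and $\E\sum_{i\in\mathcal{C}_k}|w_X^i|^2$ symbolically until the final step.
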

\begin{proof}
In this proof, we use \eqref{eq:WW1} to improve the result. Recall the increment of $w_V^i$ during a short time interval
    \begin{align}
        \notag&\E\sum\limits_{i\in\mathcal{C}_k}|w_V^i(t)-w_V^i(t_k)|^2\\
        \notag&=\E\sum\limits_{i\in\mathcal{C}_k}\bigg|\int_{t_k}^t \frac{1}{p-1} \sum\limits_{j\in\mathcal{C}_k}\psi(|\tilde{X}^j(s) - \tilde{X}^i(s)|)(\tilde{V}^j(s) - \tilde{V}^i(s))
        \\
        \notag&\quad-\frac{1}{N-1} \sum\limits_{j}\psi(|\hat{X}^{ji}(s) - \hat{X}^{ii}(s)|)(\hat{V}^{ji}(s) - \hat{V}^{ii}(s))ds\bigg|^2\\
        \notag&=\E\sum\limits_{i\in\mathcal{C}_k}\tau^2 \bigg| \frac{1}{p-1} \sum\limits_{j\in\mathcal{C}_k}\psi(|\tilde{X}^j(u) - \tilde{X}^i(u)|)(\tilde{V}^j(u) - \tilde{V}^i(u))
        \\
        \notag&\quad-\frac{1}{N-1} \sum\limits_{j}\psi(|\hat{X}^{ji}(u) - \hat{X}^{ii}(u)|)(\hat{V}^{ji}(u) - \hat{V}^{ii}(u))\bigg|^2\\
        \notag& = \tau^2\E\sum\limits_{i\in\mathcal{C}_k}\Bigg|\frac{1}{p-1}\sum\limits_{j\in\mathcal{C}_k}\psi(|\tilde{X}^j(u) - \tilde{X}^i(u)|)(\tilde{V}^j(u) - \tilde{V}^i(u)) \\&\,- \frac{1}{p-1}\sum\limits_{j\in\mathcal{C}_k}\psi(|\hat{X}^{ji}(u) - \hat{X}^{ii}(u)|)(\hat{V}^{ji}(u) - \hat{V}^{ii}(u))
        +\chi_{k,i}(\hat{Z}^{\cdot,i}(u))\Bigg|^2,\label{eq:lemR' 1}
    \end{align}
    where $u\in[t_k,t_{k+1})$. Recalling by Lemma \ref{lem:lem2}, one has that
    \begin{equation*}
        \E\sum\limits_{i\in\mathcal{C}_k}\left|\chi_{k,i} (\hat{Z}^{\cdot,i}(t_k))\right|^2\le Cp \left(\frac{1}{p-1}-\frac{1}{N-1}\right)\exp\left(-\frac{p}{N}2C_2 t_k\right),
    \end{equation*}
    and
    \begin{equation*}
        \E\sum\limits_{i\in\mathcal{C}_k}\left| \chi_{k,i} (\hat{Z}^{\cdot,i}(t)) - \chi_{k,i} (\hat{Z}^{\cdot,i}(t_k))\right|^2 \le Cp \tau^2\exp\left(-\frac{p}{N}2C_2 t\right).
    \end{equation*}
    Hence,
    \begin{equation*}
        \tau^2 \E\sum\limits_{i\in\mathcal{C}_k} |\chi_{k,i}(\hat{Z}^{\cdot,i}(u))|^2 \le Cp\tau^2 (\frac{1}{p-1}-\frac{1}{N-1})\e^{-\frac{p}{N}2C_2 t_k} +  Cp\tau^4 \e^{-\frac{p}{N}2C_2 t_k}.
    \end{equation*}
    For the first two terms of \eqref{eq:lemR' 1},  using the same scheme in Lemma \ref{lem:S}, one has
    \begin{align}
       \notag\E&\sum\limits_{i\in\mathcal{C}_k}\Bigg|\frac{1}{p-1}\sum\limits_{j\in\mathcal{C}_k}\psi(|\tilde{X}^j(u) - \tilde{X}^i(u)|)(\tilde{V}^j(u) - \tilde{V}^i(u))\\
       \label{eq:notsureR}& \quad- \frac{1}{p-1}\sum\limits_{j\in\mathcal{C}_k}\psi(|\hat{X}^{ji}(u) - \hat{X}^{ii}(u)|)(\hat{V}^{ji}(u) - \hat{V}^{ii}(u))\Bigg|^2\\
        \notag\lesssim& \frac{1}{p-1}\E\sum\limits_{i,j\in\mathcal{C}_k}\left|\left(\psi(|\tilde{X}^j(u) - \tilde{X}^i(u)|) - \psi(|\hat{X}^{ji}(u) - \hat{X}^{ii}(u)|)\right)(\tilde{V}^j(u) - \tilde{V}^i(u))\right|^2\\
        \notag & 
        \quad+ \frac{1}{p-1}\E\sum\limits_{j\in\mathcal{C}_k}\left|\psi(|\hat{X}^{ji}(u) - \hat{X}^{ii}(u)|)(\tilde{V}^j(u) - \tilde{V}^i(u) - \hat{V}^{ji}(u) + \hat{V}^{ii}(u))\right|^2.
        \end{align}
        By the boundedness of $\tilde{V}$ in Proposition \ref{ineq: coarseD},
        \begin{align*}
        \E&\sum\limits_{i,j\in\mathcal{C}_k}\left|\left(\psi(|\tilde{X}^j(u) - \tilde{X}^i(u)|) - \psi(|\hat{X}^{ji}(u) - \hat{X}^{ii}(u)|)\right)(\tilde{V}^j(u) - \tilde{V}^i(u)\right|^2\\
        \lesssim & \E\sum\limits_{i,j\in\mathcal{C}_k}\left| \tilde{X}^j(u)- \tilde{X}^i(u) - \hat{X}^{ji}(u) + \hat{X}^{ii}(u)\right|^2\\
        \lesssim &p \E\sum\limits_{i\in\mathcal{C}_k}|w_X^i(u)|^2  + p^2\tau^2,
        \end{align*}
        where the last inequality is derived similarly as \eqref{eq:lem:S'} in Lemma \ref{lem:S'}. Then \eqref{eq:notsureR} is estimated as
        \begin{align*}
        \E&\sum\limits_{i\in\mathcal{C}_k}\Bigg|\frac{1}{p-1}\sum\limits_{j\in\mathcal{C}_k}\psi(|\tilde{X}^j(u) - \tilde{X}^i(u)|)(\tilde{V}^j(u) - \tilde{V}^i(u))\\
        & \quad- \frac{1}{p-1}\sum\limits_{j\in\mathcal{C}_k}\psi(|\hat{X}^{ji}(u) - \hat{X}^{ii}(u)|)(\hat{V}^{ji}(u) - \hat{V}^{ii}(u))\Bigg|^2\\
        \lesssim
        &\E\sum\limits_{i\in\mathcal{C}_k}|w_X^i(u)|^2  + p\tau^2
        + \frac{1}{p-1}\left(p\E\sum\limits_{i\in\mathcal{C}_k}|w_V^i(u)|^2 + p^2\tau^2\e^{-C_2\frac{p}{N}t}\right)\\
        \lesssim& \E\sum\limits_{i\in\mathcal{C}_k}|w_V^i(t)|^2 
        +\E\sum\limits_{i\in\mathcal{C}_k}|w_X^i(t)|^2
        + p \tau^2 + p\tau^2 \e^{-C_2\frac{p}{N}t},
    \end{align*}
    where the last inequality is derived by \eqref{eq:WW1}. Hence, \eqref{eq:lemR' 1} becomes
    \begin{multline}\label{eq:lemR'2}
        \E\sum\limits_{i\in\mathcal{C}_k}|w_V^i(t)-w_V^i(t_k)|^2 \le C\tau^2 \E\sum\limits_{i\in\mathcal{C}_k}|w_V^i(t)|^2
        +C\tau^2 \E\sum\limits_{i\in\mathcal{C}_k}|w_X^i(t)|^2\\ + Cp\tau^4 + C\tau^2 p \left(\frac{1}{p-1}-\frac{1}{N-1}\right)\e^{-C_3\frac{p}{N}t}.
    \end{multline}
    Combining \eqref{eq:lemR'2} with \eqref{eq:lemR 1.1} and \eqref{eq:lemR 1.2} in Lemma \ref{lem:R}, and Lemma \ref{lem:lem2}, one has that
    \begin{align*}
        R(t) \le&C\frac{2}{N}\tau \left(\E\sum\limits_{i\in\mathcal{C}_k}|w_V^i(t)|^2 \right)^{\frac{1}{2}}\left(p \e^{-\frac{p}{N}C_3 t}\right)^{\frac{1}{2}} \\
        &+ C \frac{1}{N}\left(\E\sum\limits_{i\in\mathcal{C}_k}|w_V^i(t)|^2 \right)^{\frac{1}{2}}\left(p\tau^2 \left(\frac{1}{p-1} - \frac{1}{N-1}\right) \e^{-C_3 \frac{p}{N}t}\right)^{\frac{1}{2}}\\
        &+ C \frac{1}{N}\left(\E\sum\limits_{i\in\mathcal{C}_k}|w_X^i(t)|^2 \right)^{\frac{1}{2}}\left(p\tau^2 \left(\frac{1}{p-1} - \frac{1}{N-1}\right) \e^{-C_3 \frac{p}{N}t}\right)^{\frac{1}{2}}\\
        &+C \frac{p}{N}\tau\left(\frac{1}{p-1} - \frac{1}{N-1}\right) \e^{-C_3 \frac{p}{N}t} + C \frac{p}{N}\tau^2 \e^{-\frac{C_3}{2}\frac{p}{N}t}\\
        \le&C\frac{2}{N}\tau \left(\E\sum\limits_{i\in\mathcal{C}_k}|w_V^i(t)|^2 \right)^{\frac{1}{2}}\left(p \e^{-\frac{p}{N}C_3 t}\right)^{\frac{1}{2}} \\
        &+ C \frac{p}{N}\tau \left(\frac{1}{p-1} - \frac{1}{N-1}\right) \e^{-\frac{C_3}{2} \frac{p}{N}t} + C \frac{p}{N}\tau^2 \e^{-\frac{C_3}{2}\frac{p}{N}t},
    \end{align*}
    where the last inequality is derived by 
    \begin{align*}
        &\frac{1}{N}\left(\E\sum\limits_{i\in\mathcal{C}_k}|w_X^i(t)|^2 \right)^{\frac{1}{2}}\left(p\tau^2 \left(\frac{1}{p-1} - \frac{1}{N-1}\right) \e^{-C_3 \frac{p}{N}t}\right)^{\frac{1}{2}} \\
        &\lesssim \frac{p}{N}\tau \E\sum\limits_{i\in\mathcal{C}_k}|w_X^i(t)|^2 + \frac{p}{N}\tau \left(\frac{1}{p-1} - \frac{1}{N-1}\right) \e^{-\frac{C_3}{2} \frac{p}{N}t}\\
       & \lesssim \frac{p}{N}\tau^2  \e^{-\frac{C_3}{2} \frac{p}{N}t} +
        \frac{p}{N}\tau \left(\frac{1}{p-1} - \frac{1}{N-1}\right) \e^{-\frac{C_3}{2} \frac{p}{N}t},
    \end{align*}
    since \eqref{eq:WW1} holds.
\end{proof}

\subsection{Refined estimate 2.}\label{subsubsec:refined2}

Here, we further improve the exponential decay rates for the functionals $S(t)$ and $R(t)$.

\begin{lemma}\label{lem:S''}
Under the assumption of Theorem \ref{thm:main}, one has
\begin{align}
    \notag S(t) \le & \frac{1}{(p-1)N}(-2p + \frac{N-p}{N-1})\E\sum\limits_{i\in\mathcal{C}_k}|w_V^i(t)|^2 \\
        \notag &+\frac{C\sqrt{p}}{N}\left(\left(\frac{1}{p-1}-\frac{1}{N-1}\right)\tau\e^{-C_3\frac{p}{N}t} + \tau^2 \e^{-C_3 \frac{p}{N}t} \right)^{\frac{1}{2}} \left(\E\sum\limits_{i\in\mathcal{C}_k}|w_V^i(t)|^2\right)^{\frac{1}{2}}\\
        \notag &+ C\frac{p}{N}\tau \left(\frac{1}{p-1} - \frac{1}{N-1}\right)\e^{-C_3 \frac{p}{N}t}  + C\frac{p}{N}\tau^2 \e^{-C_2 \frac{p}{N}t}.
\end{align}
where $C$ is a constant depending on $\psi,$ $D(X^{in})$ and $D(V^{in})$.
\end{lemma}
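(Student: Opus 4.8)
The plan is to derive Lemma \ref{lem:S''} essentially by substitution: start from the estimate \eqref{eq:lem:S'} of Lemma \ref{lem:S'} and feed in the sharpened bounds \eqref{eq:WW2} that were established in Step 1(b). Inspecting the five terms on the right-hand side of \eqref{eq:lem:S'}, the dissipative term $\frac{1}{(p-1)N}(-2p+\tfrac{N-p}{N-1})\E\sum_{i\in\mathcal{C}_k}|w_V^i|^2$ and the two forcing terms $C\tfrac{p}{N}(\tfrac{1}{p-1}-\tfrac{1}{N-1})\tau\,\e^{-C_3\frac{p}{N}t}$ and $C\tfrac{p}{N}\tau^2\,\e^{-C_2\frac{p}{N}t}$ already appear in the form claimed in Lemma \ref{lem:S''}, so they are simply retained. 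Only the two cross terms in \eqref{eq:lem:S'} need to be reworked, and both are absorbed into the single mixed term $\tfrac{C\sqrt p}{N}\big((\tfrac1{p-1}-\tfrac1{N-1})\tau\e^{-C_3\frac pNt}+\tau^2\e^{-C_3\frac pNt}\big)^{1/2}(\E\sum_{i\in\mathcal{C}_k}|w_V^i|^2)^{1/2}$ of the target bound.

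For the term $C\tfrac{p\sqrt p}{(p-1)N}(\tau^2\e^{-C_3\frac pNt})^{1/2}(\E\sum_{i\in\mathcal{C}_k}|w_V^i|^2)^{1/2}$ I would use $\tfrac{p}{p-1}\le 2$ (since $p\ge 2$) to bound the prefactor by $\tfrac{C\sqrt p}{N}$, and note $\tau^2\e^{-C_3\frac pNt}\le\big((\tfrac1{p-1}-\tfrac1{N-1})\tau+\tau^2\big)\e^{-C_3\frac pNt}$, which places this term inside the target mixed term. For the term $\tfrac{C}{N}(\E\sum_{i\in\mathcal{C}_k}|w_X^i|^2)^{1/2}(\E\sum_{i\in\mathcal{C}_k}|w_V^i|^2)^{1/2}\,\e^{-\frac{C_3}{2}\frac pNt}$ I would invoke the second line of \eqref{eq:WW2}, namely $\E\sum_{i\in\mathcal{C}_k}|w_X^i|^2\le Cp\big((\tfrac1{p-1}-\tfrac1{N-1})\tau+\tau^2\big)$, whence $(\E\sum_{i\in\mathcal{C}_k}|w_X^i|^2)^{1/2}\le C\sqrt p\,\big((\tfrac1{p-1}-\tfrac1{N-1})\tau+\tau^2\big)^{1/2}$; folding $\e^{-\frac{C_3}{2}\frac pNt}=(\e^{-C_3\frac pNt})^{1/2}$ into the square root turns this precisely into $\tfrac{C\sqrt p}{N}\big((\tfrac1{p-1}-\tfrac1{N-1})\tau\e^{-C_3\frac pNt}+\tau^2\e^{-C_3\frac pNt}\big)^{1/2}(\E\sum_{i\in\mathcal{C}_k}|w_V^i|^2)^{1/2}$. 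Collecting the two reworked cross terms into one of this form and keeping the dissipative and forcing terms yields the asserted inequality. I would not apply Young's inequality here, since the mixed term is meant to be split later in Step 1(c) jointly with the bound for $R(t)$.

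The step requiring the most care — though it remains routine — is the bookkeeping of the powers of $p$ and $N$: tracking how $\tfrac{1}{(p-1)N}$, $\tfrac{p^{3/2}}{(p-1)N}$, and the $\tfrac1N$ in front of the mixed $w_X$–$w_V$ term combine with the $\sqrt p$ emerging from $(\E\sum_{i\in\mathcal{C}_k}|w_X^i|^2)^{1/2}$, and verifying that the rate $\e^{-\frac{C_3}{2}\frac pNt}$ sitting outside the $w_X$ cross term matches $\e^{-C_3\frac pNt}$ once pulled inside the square root. This works only because the $w_X$ bound in \eqref{eq:WW2} is time-uniform (it carries no decay factor of its own), so all the decay in the mixed term of Lemma \ref{lem:S''} must come from the factor already present in \eqref{eq:lem:S'}. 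No estimates beyond \eqref{eq:WW2}, the elementary bound $\tfrac{p}{p-1}\le 2$, and $a\le a+b$ for nonnegative $a,b$ are needed.
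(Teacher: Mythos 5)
Your proposal is correct and follows essentially the same route as the paper: the paper likewise obtains Lemma \ref{lem:S''} by re-estimating the cross terms of Lemma \ref{lem:S'} (specifically the $I_2$ contribution in \eqref{eq:S'I2}) via the sharpened bound $\E\sum_{i\in\mathcal{C}_k}|w_X^i(t)|^2\le Cp\big((\tfrac{1}{p-1}-\tfrac{1}{N-1})\tau+\tau^2\big)$ from \eqref{eq:WW2}, folding the decay factor into the square root exactly as you describe, while keeping the dissipative and forcing terms unchanged. Your bookkeeping of the $p$, $N$ powers and of the exponent $\e^{-\frac{C_3}{2}\frac{p}{N}t}=(\e^{-C_3\frac{p}{N}t})^{1/2}$ matches the paper's computation.
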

\begin{proof}
We only modify the estimate of $I_2$ in Lemma \ref{lem:S'}. Recall \eqref{eq:S'I2}, by \eqref{eq:WW2}, it holds that
    \begin{align*}
        I_2 \le& \frac{C}{N}\left(\E\sum\limits_{i\in\mathcal{C}_k}|w_V^i(t)|^2\right)^{\frac{1}{2}}
        \left(\E\sum\limits_{i\in\mathcal{C}_k}|w_X^i(t)|^2\right)^{\frac{1}{2}}\e^{-C_2\frac{p}{N}t}\\
        &+ \frac{Cp^{\frac{3}{2}}}{(p-1)N}\left(\E\sum\limits_{i\in\mathcal{C}_k}|w_V^i(t)|^2\right)^{\frac{1}{2}}\left(\tau^2 \e^{-C_3\frac{p}{N}t}\right)^{\frac{1}{2}}\\
        \le& \frac{C\sqrt{p}}{N}\left(\left(\frac{1}{p-1}-\frac{1}{N-1}\right)\tau\e^{-C_3\frac{p}{N}t}\right)^{\frac{1}{2}}
        \left(\E\sum\limits_{i\in\mathcal{C}_k}|w_V^i(t)|^2\right)^{\frac{1}{2}}\\
        &+ \frac{Cp^{\frac{3}{2}}}{(p-1)N}\left(\E\sum\limits_{i\in\mathcal{C}_k}|w_V^i(t)|^2\right)^{\frac{1}{2}}\left(\tau^2 \e^{-C_3\frac{p}{N}t}\right)^{\frac{1}{2}}.
    \end{align*}

    Therefore, combining the estimates of $I_1$ in Lemma \ref{lem:S'}, it holds
    \begin{align*}
        S\le & I_1+I_2\\
        \le & \frac{1}{(p-1)N}(-2p + \frac{N-p}{N-1})\E\sum\limits_{i\in\mathcal{C}_k}|w_V^i(t)|^2 \\
        &+\frac{C\sqrt{p}}{N}\left(\left(\frac{1}{p-1}-\frac{1}{N-1}\right)\tau\e^{-C_3\frac{p}{N}t}\right)^{\frac{1}{2}}
        \left(\E\sum\limits_{i\in\mathcal{C}_k}|w_V^i(t)|^2\right)^{\frac{1}{2}}\\
        &+ C\frac{C\sqrt{p}}{N}(\tau^2 \e^{-C_3 \frac{p}{N}t})^{\frac{1}{2}}\left(\E\sum\limits_{i\in\mathcal{C}_k}|w_V^i(t)|^2\right)^{\frac{1}{2}} \\
        &+ C\frac{p}{N}\tau \left(\frac{1}{p-1} - \frac{1}{N-1}\right)\e^{-C_3 \frac{p}{N}t}  + C\frac{p}{N}\tau^2 \e^{-C_2 \frac{p}{N}t}.
    \end{align*}
\end{proof}

Based on \eqref{eq:WW2}, we modify the estimate of $R(t)$.

\begin{lemma}\label{lem:R''}
Under the assumption of Theorem \ref{thm:main}, one has that
\begin{align*}
    R(t) \le& C\frac{\sqrt{p}}{N}\tau \left(\E\sum\limits_{i\in\mathcal{C}_k}|w_V^i(t)|^2 \right)^{\frac{1}{2}}\left(\tau^2 \e^{-\frac{p}{N}C_3 t}\right)^{\frac{1}{2}}\\ &+ C \frac{p}{N}\tau \left(\frac{1}{p-1} - \frac{1}{N-1}\right) \e^{-C_3 \frac{p}{N}t} + C \frac{p}{N}\tau^2 \e^{-\frac{C_3}{2}\frac{p}{N}t},
\end{align*}
    where the positive constant $C$ depends on $\psi,$ $D(X^{in})$ and $D(V^{in})$.
\end{lemma}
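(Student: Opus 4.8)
The proof of Lemma~\ref{lem:R''} proceeds along exactly the same lines as Lemma~\ref{lem:R'}; the only new ingredient is the sharper pair of estimates \eqref{eq:WW2} produced at the end of Step~1(b), which replace the weaker \eqref{eq:WW1}. First I would recall the decomposition of $R(t)$ used in Lemma~\ref{lem:R}: since $\E[w_V^i(t_k)\cdot\chi_{k,i}(\hat Z^{\cdot,i}(t_k))\mid i\in\mathcal C_k]=0$ by Lemma~\ref{lem:lem2}, we write $R(t)$ as the sum of the $\chi$-increment term \eqref{eq:lemR 1.1}, namely $\frac2N(\E\sum_{i\in\mathcal C_k}|w_V^i(t)|^2)^{1/2}(\E\sum_{i\in\mathcal C_k}|\chi_{k,i}(\hat Z^{\cdot,i}(t))-\chi_{k,i}(\hat Z^{\cdot,i}(t_k))|^2)^{1/2}$, and the $w_V$-increment term \eqref{eq:lemR 1.2}, namely $\frac2N(\E\sum_{i\in\mathcal C_k}|w_V^i(t)-w_V^i(t_k)|^2)^{1/2}(\E\sum_{i\in\mathcal C_k}|\chi_{k,i}(\hat Z^{\cdot,i}(t_k))|^2)^{1/2}$. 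The two $\chi$-factors are controlled directly by \eqref{eq:lem2.3} and \eqref{eq:lem2.4}, which give $\E\sum_{i\in\mathcal C_k}|\chi_{k,i}(\hat Z^{\cdot,i}(t_k))|^2\le Cp(\tfrac1{p-1}-\tfrac1{N-1})\e^{-\frac pN 2C_2 t_k}$ and $\E\sum_{i\in\mathcal C_k}|\chi_{k,i}(\hat Z^{\cdot,i}(t))-\chi_{k,i}(\hat Z^{\cdot,i}(t_k))|^2\le Cp\tau^2\e^{-\frac pN 2C_2 t}$.

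The heart of the refinement is re-estimating the velocity increment $\E\sum_{i\in\mathcal C_k}|w_V^i(t)-w_V^i(t_k)|^2$. Following \eqref{eq:lemR' 1}, I would use the mean value theorem to express the time integral as $\tau^2$ times the integrand evaluated at some $u\in[t_k,t_{k+1})$, then split that integrand into the difference of the two ``batch forces'' (for $\tilde Z$ and for $\hat Z$) plus $\chi_{k,i}(\hat Z^{\cdot,i}(u))$. The batch-force difference is handled exactly as in Lemma~\ref{lem:S'} and Lemma~\ref{lem:R'}: by the Lipschitz continuity and boundedness of $\psi$, the boundedness of $\tilde V,\hat V$ (Proposition~\ref{ineq: coarseD}, Proposition~\ref{lem:V2diff}) and Lemma~\ref{lem:ui=t}, it is bounded by $C\big(\E\sum_{i\in\mathcal C_k}|w_V^i(t)|^2+\E\sum_{i\in\mathcal C_k}|w_X^i(t)|^2+p\tau^2+p\tau^2\e^{-C_2\frac pN t}\big)$; the $\chi$-term contributes $\tau^2\E\sum_{i\in\mathcal C_k}|\chi_{k,i}(\hat Z^{\cdot,i}(u))|^2\le Cp\tau^2(\tfrac1{p-1}-\tfrac1{N-1})\e^{-\frac pN 2C_2 t}+Cp\tau^4\e^{-\frac pN 2C_2 t}$ by Lemma~\ref{lem:lem2}. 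Now, crucially, instead of \eqref{eq:WW1} I would insert the sharper bounds \eqref{eq:WW2}, i.e. $\tfrac1N\E\sum_{i\in\mathcal C_k}|w_V^i(t)|^2\le C\tfrac pN\tau^2\e^{-C\frac pN t}+C\tfrac pN(\tfrac1{p-1}-\tfrac1{N-1})\tau\e^{-C\frac pN t}$ and $\tfrac1N\E\sum_{i\in\mathcal C_k}|w_X^i(t)|^2\le C\tfrac pN(\tfrac1{p-1}-\tfrac1{N-1})\tau+C\tfrac pN\tau^2$. Collecting all the pieces and using $\tau\ll1$ then yields an increment bound of the form $\E\sum_{i\in\mathcal C_k}|w_V^i(t)-w_V^i(t_k)|^2\le Cp\tau^2(\tfrac1{p-1}-\tfrac1{N-1})\e^{-C_3\frac pN t}+Cp\tau^4$, which is the improvement over \eqref{eq:lemR'2}.

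Finally, I would substitute this refined increment bound together with the $\chi$-bounds of Lemma~\ref{lem:lem2} back into \eqref{eq:lemR 1.1}--\eqref{eq:lemR 1.2}, and apply the Cauchy--Schwarz/Young inequality to each product, keeping the factors $(\E\sum_{i\in\mathcal C_k}|w_V^i(t)|^2)^{1/2}$ explicit (they must survive because $\E\sum|w_V^i|^2$ is the unknown later fed into the Gr\"onwall argument of Step~1(c)) while fully estimating all other factors. This produces the asserted three-term bound: the cross term $C\tfrac{\sqrt p}N\tau(\E\sum_{i\in\mathcal C_k}|w_V^i(t)|^2)^{1/2}(\tau^2\e^{-\frac pN C_3 t})^{1/2}$, the term $C\tfrac pN\tau(\tfrac1{p-1}-\tfrac1{N-1})\e^{-C_3\frac pN t}$ coming from pairing the $w_V$-increment with $\chi(t_k)$, and the remainder $C\tfrac pN\tau^2\e^{-\frac{C_3}2\frac pN t}$ that absorbs the $\tau^4$ contribution and the lower-order pieces. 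The main obstacle, as in Lemma~\ref{lem:R'}, is purely bookkeeping: one must track carefully which exponential rate ($C_1$, $C_2$, or $C_3=\min\{C_1,2C_2\}$) governs each product, and verify that after Young's inequality every $(\tfrac1{p-1}-\tfrac1{N-1})$-proportional remainder emerges linear (rather than half-power or $\tau^2$) in $\tau$ — this is precisely where feeding in \eqref{eq:WW2} rather than \eqref{eq:WW1} is indispensable, and where a careless split of a Young product would degrade the final rate.
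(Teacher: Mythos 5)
Your overall architecture is right — decompose $R$ as in Lemma \ref{lem:R}, reuse \eqref{eq:lemR 1.1}, and sharpen the increment estimate by feeding in \eqref{eq:WW2} — and your treatment of the $\chi$-increment term \eqref{eq:lemR 1.1} matches the paper. But there is a genuine gap in how you handle the pairing of $w_V^i(t)-w_V^i(t_k)$ with $\chi_{k,i}(\hat Z^{\cdot,i}(t_k))$. Your plan is to first bound $\E\sum_{i\in\mathcal C_k}|w_V^i(t)-w_V^i(t_k)|^2$ and then apply Cauchy--Schwarz as in \eqref{eq:lemR 1.2}. When you square the integrand, the spatial piece $[\psi(|\tilde X^j-\tilde X^i|)-\psi(|\hat X^{ji}-\hat X^{ii}|)](\tilde V^j-\tilde V^i)$ is controlled (as in Lemma \ref{lem:R'}) using only the \emph{boundedness} of $\tilde V$, which discards all time decay and leaves a contribution $\tau^2\,\E\sum_{i\in\mathcal C_k}|w_X^i|^2\lesssim p\tau^3\big(\tfrac1{p-1}-\tfrac1{N-1}\big)+p\tau^4$ with \emph{no} exponential factor, since \eqref{eq:WW2} gives no decay for $w_X$. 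Your stated increment bound $Cp\tau^2(\tfrac1{p-1}-\tfrac1{N-1})\e^{-C_3\frac pN t}+Cp\tau^4$ silently drops this $p\tau^3(\tfrac1{p-1}-\tfrac1{N-1})$ piece; it is not dominated by either retained term uniformly in $t$. Carrying it through Cauchy--Schwarz against $\big(\E\sum|\chi_{k,i}(\hat Z^{\cdot,i}(t_k))|^2\big)^{1/2}\lesssim\big(p(\tfrac1{p-1}-\tfrac1{N-1})\e^{-2C_2\frac pN t}\big)^{1/2}$ produces $\tfrac pN\tau^{3/2}\big(\tfrac1{p-1}-\tfrac1{N-1}\big)\e^{-C_2\frac pN t}$, and since only half of $\chi$'s rate survives the square root, no Young splitting turns this into the claimed $\tfrac pN\tau(\tfrac1{p-1}-\tfrac1{N-1})\e^{-C_3\frac pN t}+\tfrac pN\tau^2\e^{-\frac{C_3}2\frac pN t}$; you only recover the weaker rate $\e^{-\frac{C_3}{2}\frac pN t}$ of Lemma \ref{lem:R'}. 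Since the entire point of Lemma \ref{lem:R''} over Lemma \ref{lem:R'} is the upgrade from $\e^{-\frac{C_3}{2}\frac pN t}$ to $\e^{-C_3\frac pN t}$ on the $(\tfrac1{p-1}-\tfrac1{N-1})\tau$ term, this is not mere bookkeeping.

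The paper avoids this by \emph{not} squaring the increment first: it writes $w_V^i(t)-w_V^i(t_k)$ as the time integral of the three pieces (batch-force spatial difference, batch-force velocity difference, and $\chi_{k,i}(\hat Z^{\cdot,i}(s))$) and pairs each piece with $\chi_{k,i}(\hat Z^{\cdot,i}(t_k))$ inside the expectation, as in \eqref{eq:R''1}--\eqref{eq:R''3}. In \eqref{eq:R''1} the spatial difference stays multiplied by $|\hat V^{ji}-\hat V^{ii}|$, and a three-factor H\"older inequality lets the $\e^{-C_2\frac pN t}$ decay of that velocity factor (Lemma \ref{lem:ui=t}) combine with the $\e^{-\frac{C_3}{2}\frac pN t}$ half-rate from $\chi(t_k)$ to restore the full rate $\e^{-C_3\frac pN t}$, while the two half-powers of $\tfrac1{p-1}-\tfrac1{N-1}$ (one from $w_X$ via \eqref{eq:WW2}, one from the variance of $\chi$) recombine into a full power. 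Similarly \eqref{eq:R''3} exploits $\E[\chi(u)\cdot\chi(t_k)]$ directly, where both factors contribute $\e^{-C_2\frac pN t}$ so the product carries $\e^{-2C_2\frac pN t}\le\e^{-C_3\frac pN t}$. You would need to adopt this finer, pre-Cauchy--Schwarz decomposition (or otherwise retain the velocity factor's decay before squaring) to reach the stated rate.
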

\begin{proof}
In this proof, we use \eqref{eq:WW2} to improve the result. Recall the decomposition of $R$ in Lemma \ref{lem:R} combined with Lemma \ref{lem:lem2}:
    \begin{align}
        \notag R(t) =& \frac{2}{N}\E\sum\limits_{i\in\mathcal{C}_k}[w_V^i \cdot \chi_{k,i}(\hat{Z}^{\cdot,i})]\\
        \notag =&  \frac{2}{N}\E[\sum\limits_{i\in\mathcal{C}_k}(w_V^i(t)-w_V^i(t_k))\cdot\chi_{k,i}(\hat{Z}^{\cdot,i}(t_k))]\\
        \notag &+  \frac{2}{N}\E [\sum\limits_{i\in\mathcal{C}_k}w_V^i(t)\cdot(\chi_{k,i}(\hat{Z}^{\cdot,i}(t))-\chi_{k,i}(\hat{Z}^{\cdot,i}(t_k)))]\\
        \notag \le& C\frac{\sqrt{p}}{N} \left(\E\sum\limits_{i\in\mathcal{C}_k}|w_V^i(t)|^2 \right)^{\frac{1}{2}}\left(\tau^2 \e^{-C_2\frac{p}{N}t}\right)^{\frac{1}{2}} \\
        \notag &+ \frac{2}{N}\E[\sum\limits_{i\in\mathcal{C}_k}(w_V^i(t)-w_V^i(t_k))\cdot\chi_{k,i}(\hat{Z}^{\cdot,i}(t_k))].
    \end{align}
We consider
\begin{align}
    \notag \frac{2}{N}&\E[\sum\limits_{i\in\mathcal{C}_k}(w_V^i(t)-w_V^i(t_k))\cdot\chi_{k,i}(\hat{Z}^{\cdot,i}(t_k))]\\
    \notag =&\frac{2}{N}\E\bigg[\sum\limits_{i\in\mathcal{C}_k}\int_{t_k}^t \bigg(\frac{1}{p-1} \sum\limits_{j\in\mathcal{C}_k}\psi(|\tilde{X}^j(s)-\tilde{X}^i(s)|)(\tilde{V}^j(s) - \tilde{V}^i(s)) \\\notag &\,- \frac{1}{p-1}\sum\limits_{j\in\mathcal{C}_k}\psi(|\hat{X}^{ji}(s) - \hat{X}^{ii}(s)|)(\hat{V}^{ji}(s) - \hat{V}^{ii}(s))+\chi_{k,i}(\hat{Z}^{\cdot,i}(s))\bigg)ds \cdot \chi_{k,i}(\hat{Z}^{\cdot,i}(t_k))\bigg]\\
    \label{eq:R''1}\le& \frac{2}{(p-1)N}\tau \E\bigg[\sum\limits_{i,j\in\mathcal{C}_k} \Big|\psi(|\tilde{X}^j(u)-\tilde{X}^i(u)|)- \psi(|\hat{X}^{ji}(u) - \hat{X}^{ii}(u)|)\Big|\Big|\hat{V}^{ji}(u)-\hat{V}^{ii}(u))\Big|\bigg]\\
    \notag&+ \frac{2}{(p-1)N}\tau \E\bigg[\sum\limits_{i,j\in\mathcal{C}_k} \bigg|\psi(|\tilde{X}^j(u)-\tilde{X}^i(u)|)\Big(\tilde{V}^j(u)-\tilde{V}^i(u)-\hat{V}^{ji}(u)+\hat{V}^{ii}(u)\Big)\bigg|\\
    \label{eq:R''2}g&\cdot \bigg|\chi_{k,i}(\hat{Z}^{\cdot,i}(t_k))\bigg|\bigg]\\
    \label{eq:R''3}&+ \frac{2}{(p-1)N}\tau \E\bigg[\sum\limits_{i,j\in\mathcal{C}_k} \chi_{k,i}(\hat{Z}^{\cdot,i}(u)) \cdot \chi_{k,i}(\hat{Z}^{\cdot,i}(t_k))\bigg].
\end{align}
For \eqref{eq:R''1}, using similar scheme as in Lemma \ref{lem:S'} and the Lipschitz property of $\psi$, one obtains
\begin{align*}
    \E&\bigg[\sum\limits_{i,j\in\mathcal{C}_k} \Big|\psi(|\tilde{X}^j(u)-\tilde{X}^i(u)|)- \psi(|\hat{X}^{ji}(u) - \hat{X}^{ii}(u)|)\Big|\Big|\hat{V}^{ji}(u)-\hat{V}^{ii}(u))\Big|\bigg]\frac{2}{(p-1)N} \tau\\
    \le& C \frac{\tau}{Np} \left(\E\sum\limits_{i,j\in\mathcal{C}_k} \Big|\tilde{X}^j(u) - \tilde{X}^i(u)-\hat{X}^{ji}(u)+\hat{X}^{ii}(u)\Big|^{2+\epsilon}\right)^{\frac{1}{2+\epsilon}}\\
    &\cdot \left(\E\sum\limits_{i,j\in\mathcal{C}_k} \Big|\hat{V}^{ji}(u)-\hat{V}^{ii}(u)\Big|^{\frac{2+\epsilon}{\epsilon}}\right)^{\frac{\epsilon}{2+\epsilon}}\\
    &\cdot \left(\E\sum\limits_{i,j\in\mathcal{C}_k} \Big|\chi_{k,i}(\hat{Z}^{\cdot,i}(t_k))\Big|^{2+\epsilon}\right)^{\frac{1}{2+\epsilon}}\\
    \le& C \frac{\tau\sqrt{\tau}p}{N} \left(\frac{1}{p-1}-\frac{1}{N-1}\right) \e^{-C_2 \frac{p}{N}t}\e^{-\frac{C_3}{2} \frac{p}{N}t}.
\end{align*}
For \eqref{eq:R''2}, it holds that
\begin{align*}
    \E&\bigg[\sum\limits_{i,j\in\mathcal{C}_k} \bigg|\psi(|\tilde{X}^j(u)-\tilde{X}^i(u)|)\Big(\tilde{V}^j(u)-\tilde{V}^i(u)-\hat{V}^{ji}(u)+\hat{V}^{ii}(u)\Big)\bigg|\\
    &\cdot \bigg|\chi_{k,i}(\hat{Z}^{\cdot,i}(t_k))\bigg|\bigg]\frac{2}{(p-1)N}\tau\\
    \le & C\frac{\tau}{Np}\left(\E\sum\limits_{i,j\in\mathcal{C}_k}|w_V^j(u)-w_V^i(u)|^2 \right)^{\frac{1}{2}}\left(\E\sum\limits_{i,j\in\mathcal{C}_k} |\chi_{k,i}(\hat{Z}^{\cdot,i}(t_k))|^2\right)^{\frac{1}{2}}\\
    &+ C\frac{\tau}{Np}\left( \E \sum\limits_{i,j\in\mathcal{C}_k}\Big| \hat{V}^{jj}(u)-\hat{V}^{ji}(u)\Big|^2\right)^{\frac{1}{2}}\left(\E\sum\limits_{i,j\in\mathcal{C}_k} |\chi_{k,i}(\hat{Z}^{\cdot,i}(t_k))|^2\right)^{\frac{1}{2}}\\
    \le &C \frac{\tau}{N} \left(\E\sum\limits_{i\in\mathcal{C}_k}|w_V^i(u)|^2 \right)^{\frac{1}{2}}\left(p\left(\frac{1}{p-1}-\frac{1}{N-1}\right) \e^{-C_3\frac{p}{N}t}\right)^{\frac{1}{2}}\\
    &+ C\frac{p}{N}\tau^2 \left(1-\frac{p}{N}\right)^{\frac{1}{2}}\e^{-\frac{C_2}{2}\frac{p}{N}t} \left(\frac{1}{p-1}-\frac{1}{N-1}\right)^{\frac{1}{2}}\e^{-\frac{C_3}{2}\frac{p}{N}t}.
\end{align*}
In addition, by Lemma \ref{lem:lem2}, it holds
\begin{align*}
    &\frac{2}{(p-1)N}\tau \E\bigg[\sum\limits_{i,j\in\mathcal{C}_k} \chi_{k,i}(\hat{Z}^{\cdot,i}(u)) \cdot \chi_{k,i}(\hat{Z}^{\cdot,i}(t_k))\bigg]\\
    &\le C \frac{p}{N}\tau \left(\frac{1}{p-1}-\frac{1}{N-1}\right) \e^{-C_3\frac{p}{N}t} + C \frac{p}{N}\tau^2 \left(\frac{1}{p-1}-\frac{1}{N-1}\right)^{\frac{1}{2}}\e^{-C_3\frac{p}{N}t}.
\end{align*}
Combining all the estimates above and \eqref{eq:WW2}, 
\begin{align*}
    \frac{2}{N}&\E[\sum\limits_{i\in\mathcal{C}_k}(w_V^i(t)-w_V^i(t_k))\cdot\chi_{k,i}(\hat{Z}^{\cdot,i}(t_k))]\\
    \le & C \frac{p}{N}\tau \left(\frac{1}{p-1}-\frac{1}{N-1}\right) \e^{-C_3}\frac{p}{N}t + C \frac{p}{N}\tau^2 \e^{-\frac{C_3}{2}t} + C \frac{\tau}{N}\left(\E\sum\limits_{i\in\mathcal{C}_k}|w_V^i(t)|^2\right)^{\frac{1}{2}}(p\e^{-C_3\frac{p}{N}t})^{\frac{1}{2}}.
\end{align*}
Hence we conclude the proof.
\end{proof}
    
\section{Numerical Simulations}\label{sec:numerical}
In this section, we present numerical simulations on the RBM-r-approximation on a one-dimension test example.

To show the performance of RBM-r and compare it with the original system clearly, we introduce the following equivalent Algorithm \ref{algo: the RBMr'}. Without loss of generality, we set $\tau$ that divides $T$.
\begin{algorithm}\label{algo: the RBMr'}
\caption{ The RBM-r for \eqref{eq: cs}}
\For{$k=1$ to $\frac{T}{\tau}$}
{\For{$\ell=1$ to $\frac{N}{p}$}
    {Pick a batch $\mathcal{C}_k^{(\ell)}$ of size $p$ randomly.
    Update $(\tilde{X}^i,\tilde{V}^i)$ in $\mathcal{C}_k^{(\ell)}$ by solving 
    \begin{equation*}
        \left\{\begin{aligned}
            \partial_t \tilde{X}^i(t) =& \tilde{V}^i(t),\, i=1,\cdots, N,\\
            \partial_t \tilde{V}^i(t) =& \frac{\kappa}{p-1}\sum\limits_{j\in\mathcal{C}^{(\ell)}_k}
            \psi(|\tilde{X}^j(t)-\tilde{X}^i(t)|)(\tilde{V}^j(t)-\tilde{V}^i(t)),
        \end{aligned}\right.
    \end{equation*}
    for $t\in [t_k, t_{k+1}).$ 
    }}
\end{algorithm}

In the test example, we set
\begin{equation*}
    \kappa =1,\, \psi(r)=\frac{1}{(1+|r|^2)^\frac{1}{4}}.
\end{equation*}
The numerical simulations in this section were conducted with the following parameters, unless otherwise specified:
\begin{equation*}
    N=2^6,\,\tau =0.1,\,p=2.
\end{equation*}
To integrate the RBM-r system, we employed the forward Euler method for efficient computation.
Although there is a positive lower bound assumption on $\psi$ in \eqref{ass:psi}, the numerical simulations in this section yield results consistent with Theorem \ref{thm:main}, as the relative positions in the RBM-approximated trajectories do not increase rapidly.

\paragraph{Zig-zag trajectories.}
First, in Figure \ref{fig:traj}, we present the trajectories of the original system, the RBM-1 approximation, and the RBM-r approximation. It is evident that while both the RBM-1 and RBM-r approximation systems converge to zero following zig-zag paths, the RBM-r shows greater dispersion due to the allowance for replacements.

\begin{figure}
    \centering
    \includegraphics[width=1.0\linewidth]{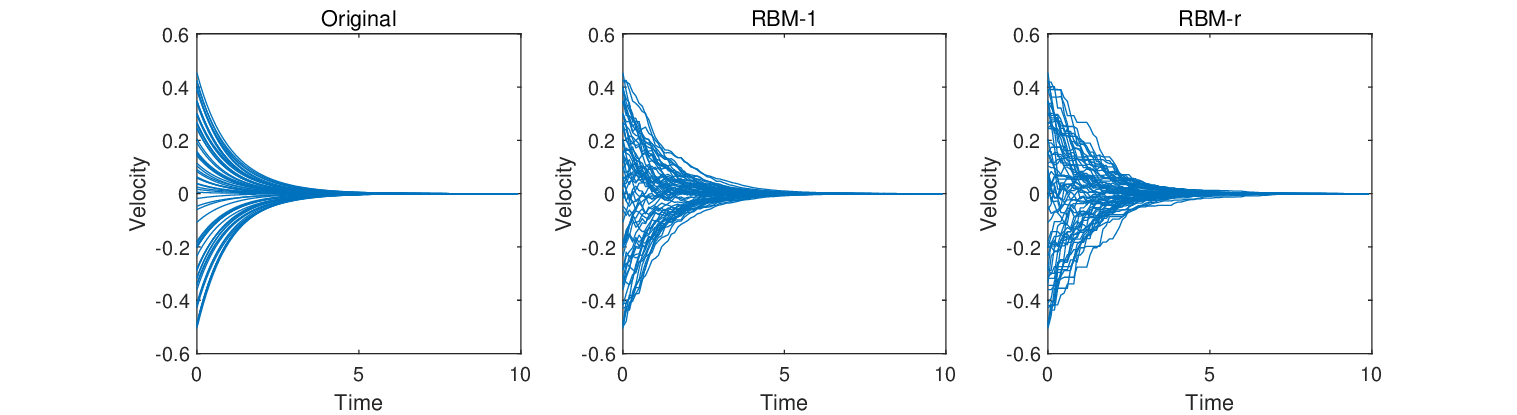}
    \caption{A simulation on trajectories along time on the original system (left), the RBM-1 system (middle) and the RBM-r system (right) with $p=2$.}
    \label{fig:traj}
\end{figure}

\paragraph{Stochastic flocking}
Next, we show the asymptotic flocking of the RBM systems. In Figure \ref{fig:flock0.1} and Figure \ref{fig:flock0.01}, we consider the scaled sum of squared difference (SSD):
\begin{equation*}
    \text{SSD of V} := \frac{1}{N^2}\sum\limits_{i,j}|V^i-V^j|^2, \quad \text{SSD of X} := \frac{1}{N^2}\sum\limits_{i,j}|X^i-X^j|^2,
\end{equation*}
from 100 simulations. In Figure \ref{fig:flock0.1}, we set $\tau = 0.1$, while in Figure \ref{fig:flock0.01} $\tau = 0.01$. We denote the RBM-r with a dashed line, the RBM-1 with a dotted line, and the original system with a solid line. As shown in Theorem \ref{thm:flocking} and Theorem \ref{thm:flockingrbm1}, the flocking rates of the two random batch methods and the original system are similar when the time step is much less than one.

\begin{figure}[htbp]
    \centering
    \subfigure[]{
    \begin{minipage}[t]{0.46\textwidth}
        \centering
        \includegraphics[scale=0.46]{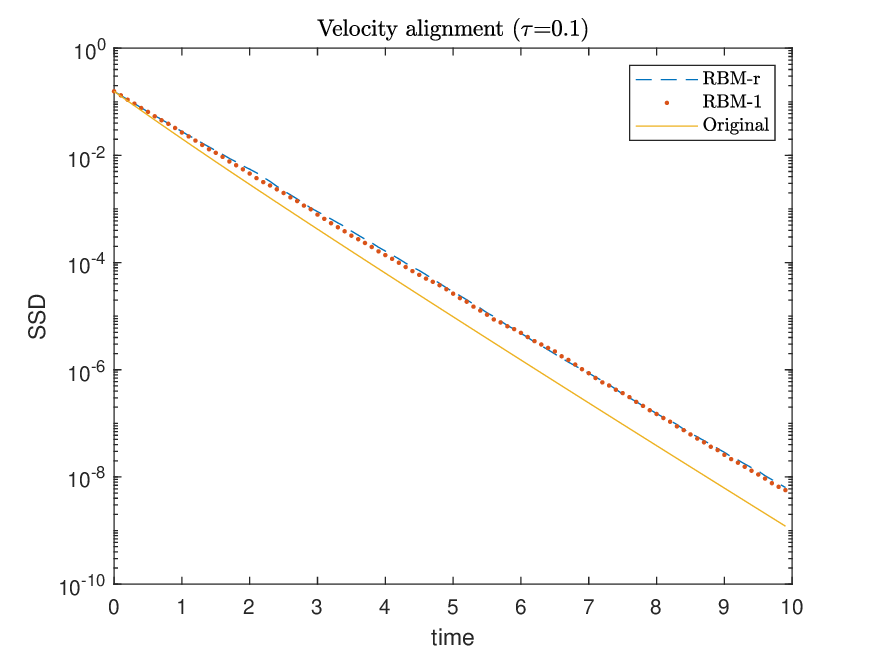}
    \label{fig:flockv0.1}
    \end{minipage}}
    \subfigure[]{
    \begin{minipage}[t]{0.46\textwidth}
        \centering
        \includegraphics[scale=0.46]{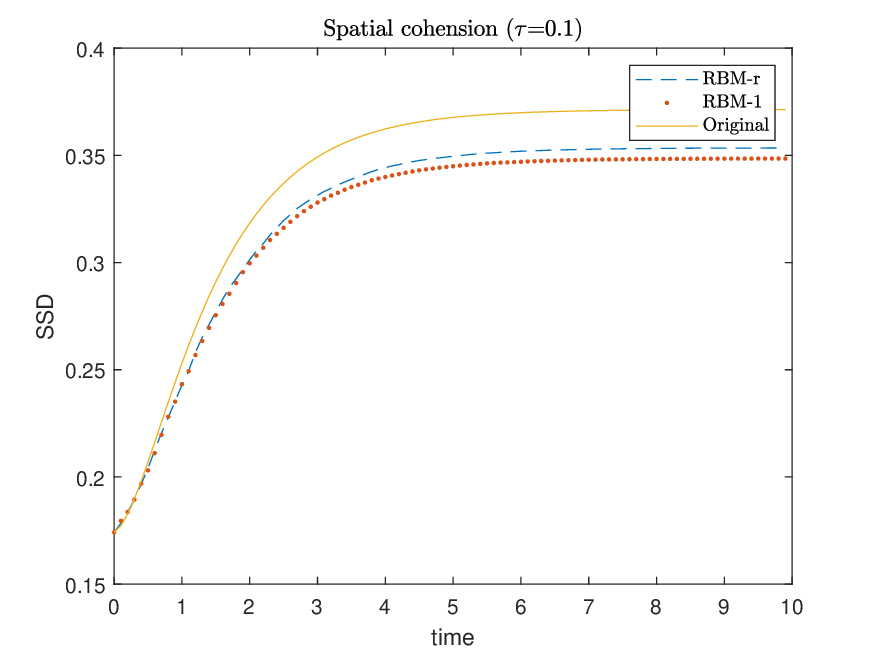}
        \label{fig:flockx0.1} 
        \end{minipage}}
        \caption{(a): The SSD of V (velocities) from 100 simulations at $\tau =0.1$. (b): The SSD of X (positions) from 100 simulations at $\tau =0.1$.}
        \label{fig:flock0.1}
\end{figure}
\begin{figure}[htbp]
    \centering
    \subfigure[]{
    \begin{minipage}[t]{0.46\textwidth}
        \centering
        \includegraphics[scale=0.46]{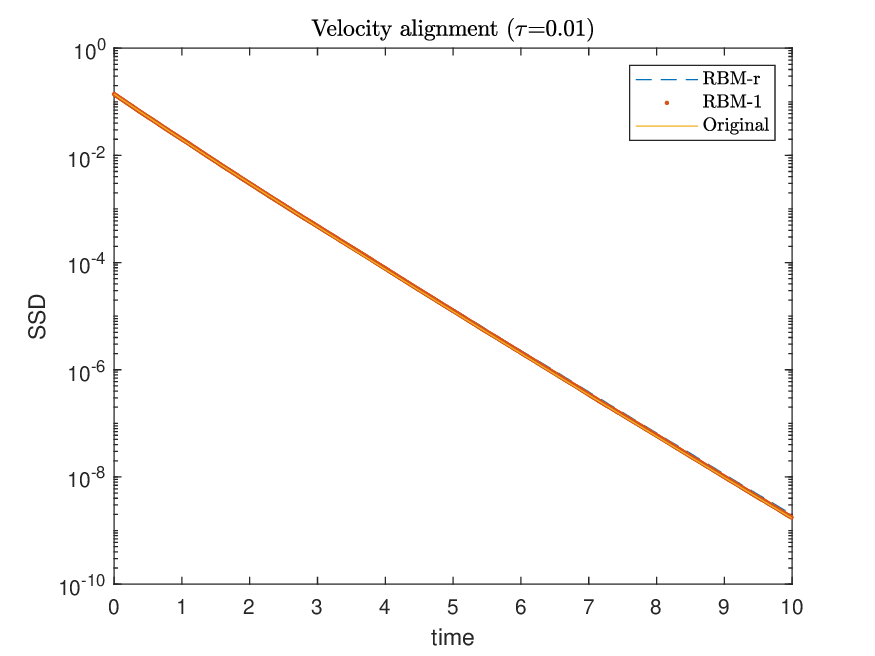}
    \label{fig:flockv0.01}
    \end{minipage}}
    \subfigure[]{
    \begin{minipage}[t]{0.46\textwidth}
        \centering
        \includegraphics[scale=0.46]{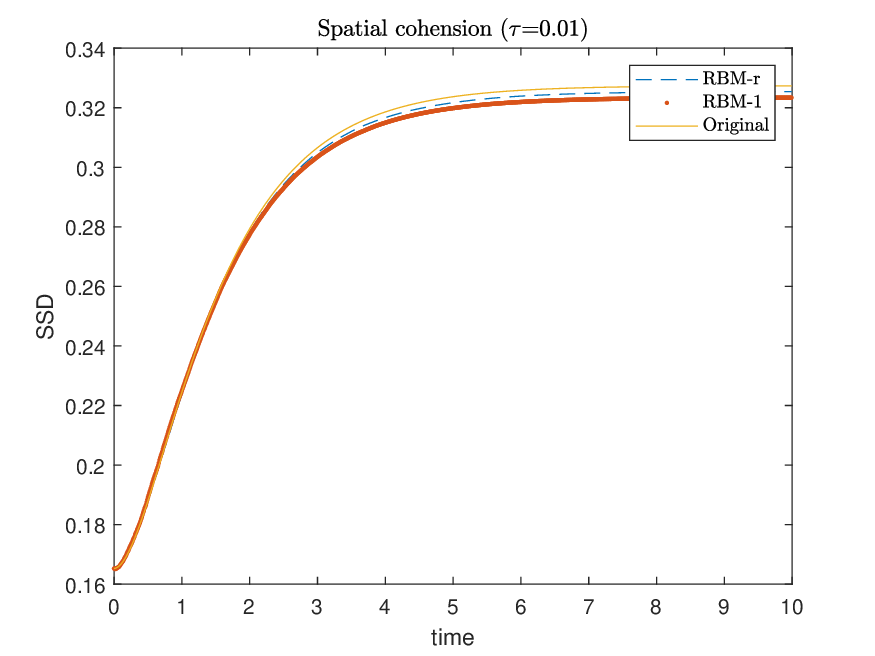}
        \label{fig:flockx0.01} 
        \end{minipage}}
        \caption{(a): The SSD of V (velocities) from 100 simulations at $\tau =0.01$. (b): The SSD of X (positions) from 100 simulations at $\tau =0.01$.}
        \label{fig:flock0.01}
\end{figure}

\paragraph{Dependence of error on the batch size.}
We now focus on numerical simulations related to the dependence on $p$. For simplicity, we set the time step of the Euler method to match the time step used for random batch selections, with $\tau =0.1$. We consider various batch sizes: $p = 2, 4, 8, 16, 32$.

In Figure \ref{fig:vp}, we present the $\ell^2$-errors derived from 1000 random simulations for each value of $p$. The $\ell^2$-error is calculated using the formula:
\begin{equation*}
    \ell^2\text{-error}(t) = \sqrt{\frac{1}{N}\sum\limits_{i=1}^N |\tilde{V}^i(t)-V^i(t)|^2},
\end{equation*}
where $\tilde{V}$ represents each simulated solution. The shaded areas corresponding to each $p$ illustrate the evolution of the $\ell^2$-errors over time.

Figure \ref{fig:vps} displays the scaled error proportional to $\sqrt{1- \frac{p}{N} + \frac{1}{p-1}-\frac{1}{N-1}}$. The scaling factor comes from the second term of \eqref{eq:thmerr}. Notably, the scaled errors for different $p$ values exhibit similar median trends over time. This suggests that the error estimate in \eqref{eq:thmerr} accurately reflects the expected order with respect to $p$.

\begin{figure}[htbp]
    \centering
    \subfigure[]{
    \begin{minipage}[t]{0.46\textwidth}
        \centering
        \includegraphics[scale=0.46]{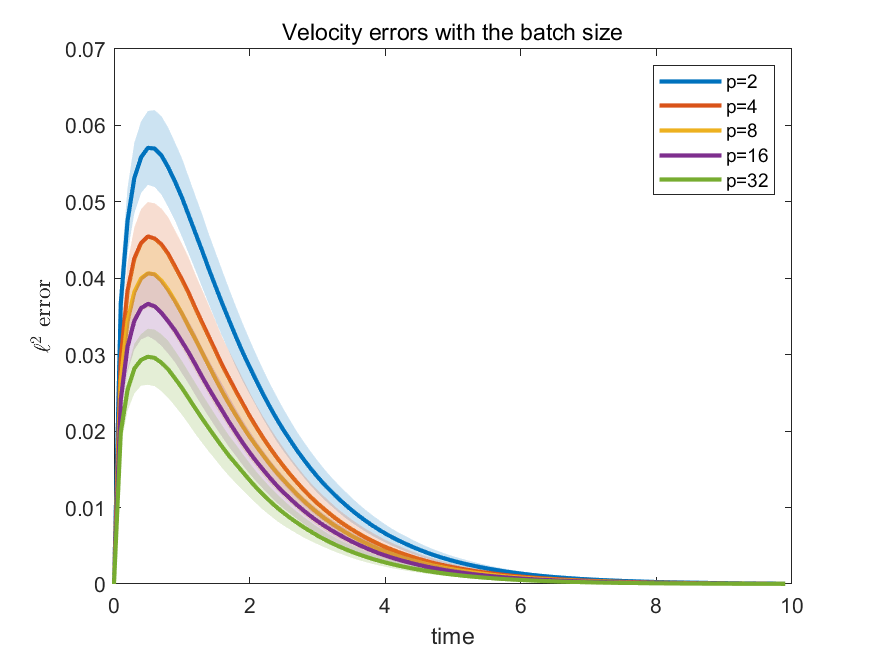}
    \label{fig:vp}
    \end{minipage}}
    \subfigure[]{
    \begin{minipage}[t]{0.46\textwidth}
        \centering
        \includegraphics[scale=0.46]{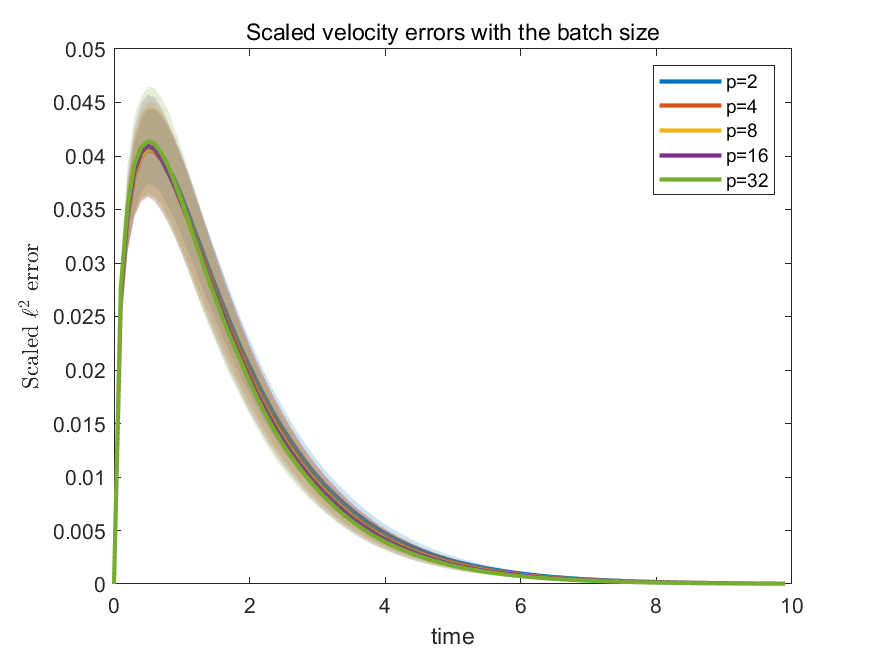}
        \label{fig:vps} 
        \end{minipage}}
        \caption{(RBM-r)(a): The $\ell^2$-errors of velocities from 1000 simulations, computed with different $p$. (b): Scaled error by the term $\sqrt{1- \frac{p}{N} + \frac{1}{p-1}-\frac{1}{N-1}}$. The scaled errors from different $p$ show similar values along time.}
        \label{fig:vp2}
\end{figure}
 We also show the parallel simulation of the RBM-1 in Figure \ref{fig:vp2rbm1}.
\begin{figure}[htbp]
    \centering
    \subfigure[]{
    \begin{minipage}[t]{0.46\textwidth}
        \centering
        \includegraphics[scale=0.46]{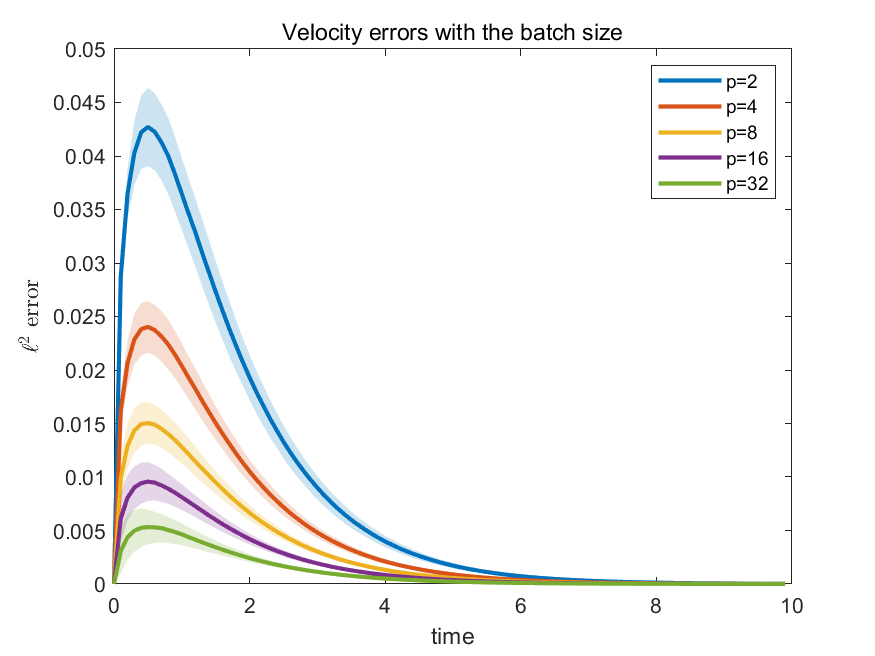}
    \label{fig:vprbm1}
    \end{minipage}}
    \subfigure[]{
    \begin{minipage}[t]{0.46\textwidth}
        \centering
        \includegraphics[scale=0.46]{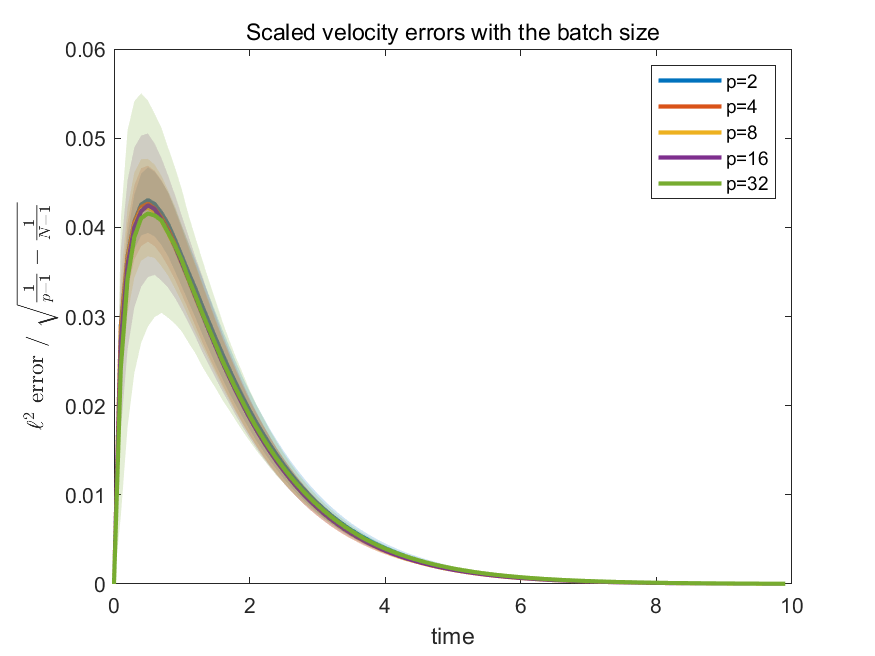}
        \label{fig:vpsrbm1} 
        \end{minipage}}
        \caption{(RBM-1)(a): The $\ell^2$-errors of velocities from 1000 simulations, computed with different $p$. (b): Scaled error by the term $\sqrt{ \ \frac{1}{p-1}-\frac{1}{N-1}}$. The scaled errors from different $p$ show similar values along time.}
        \label{fig:vp2rbm1}
\end{figure}



\paragraph{Dependence of error on the time step.}
Next, we fix $p=2$ but instead test various $\tau.$ To ensure a fair comparison among different $\tau$ values under the same conditions, we set the time step of the Euler method to  $\Delta t = 0.0125$, and we test $\tau = 0.1, 0.05, 0.025, 0.0125$. All other parameters and the graphing methods remain the same as in the previous case for $p$.

In Figure \ref{fig:dt2}, we can observe that the rate at which the error decreases with respect to $\tau$ is approximately on the order of $\sqrt{\tau}$, consistent with our expectations from the error estimate in \eqref{eq:thmerr}. 
Figure \ref{fig:dt2rbm1} shows the corresponding simulation of the RBM-1.

\begin{figure}[htbp]
    \centering
    \subfigure[]{
    \begin{minipage}[t]{0.46\textwidth}
        \centering
        \includegraphics[scale=0.46]{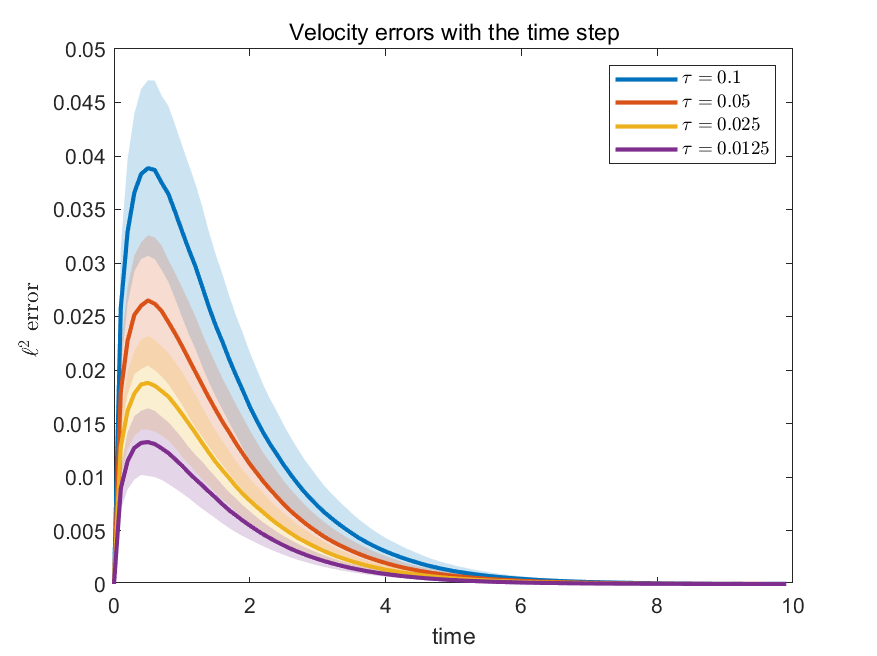}
    \label{fig:dtp}
    \end{minipage}}
    \subfigure[]{
    \begin{minipage}[t]{0.46\textwidth}
        \centering
        \includegraphics[scale=0.46]{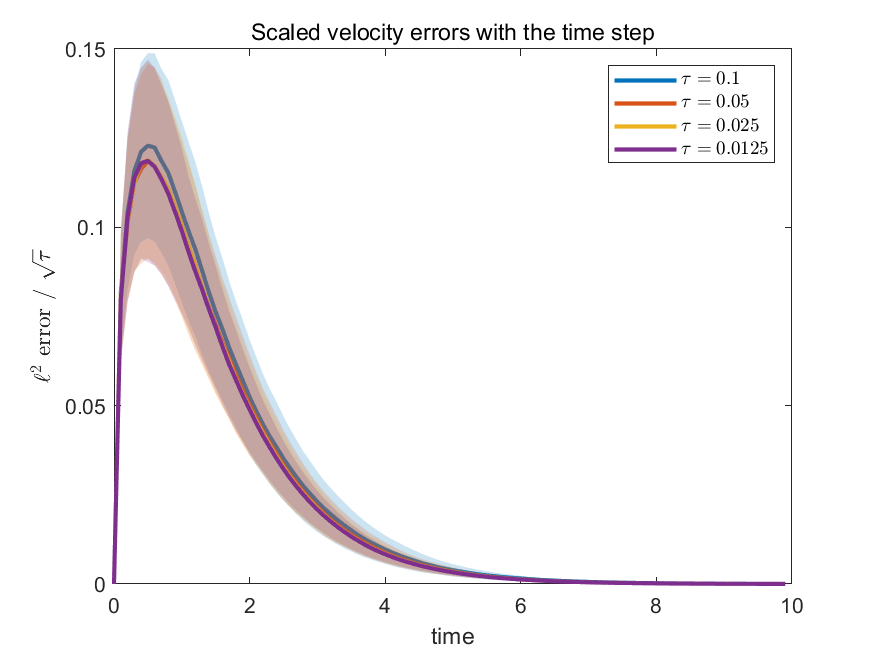}
        \label{fig:dtps} 
        \end{minipage}}
        \caption{(RBM-r)(a): The $\ell^2$-errors of velocities from 1000 simulations, computed with different $\tau$. (b): Scaled error by the term $\sqrt{\tau}$. The scaled errors from different $\tau$ show similar values along time.}
        \label{fig:dt2}
\end{figure}
\begin{figure}[htbp]
    \centering
    \subfigure[]{
    \begin{minipage}[t]{0.46\textwidth}
        \centering
        \includegraphics[scale=0.45]{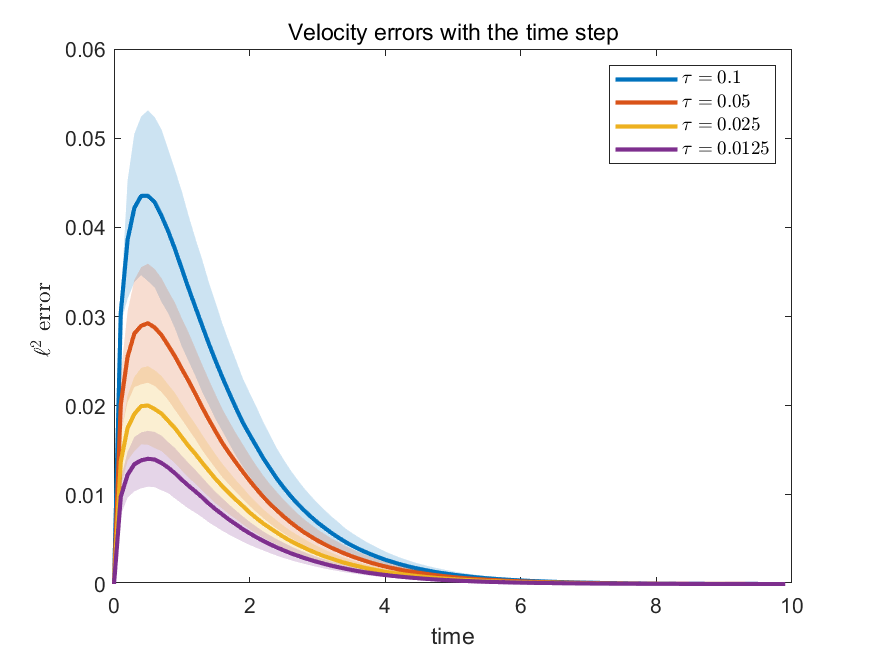}
    \label{fig:dtprbm1}
    \end{minipage}}
    \subfigure[]{
    \begin{minipage}[t]{0.46\textwidth}
        \centering
        \includegraphics[scale=0.45]{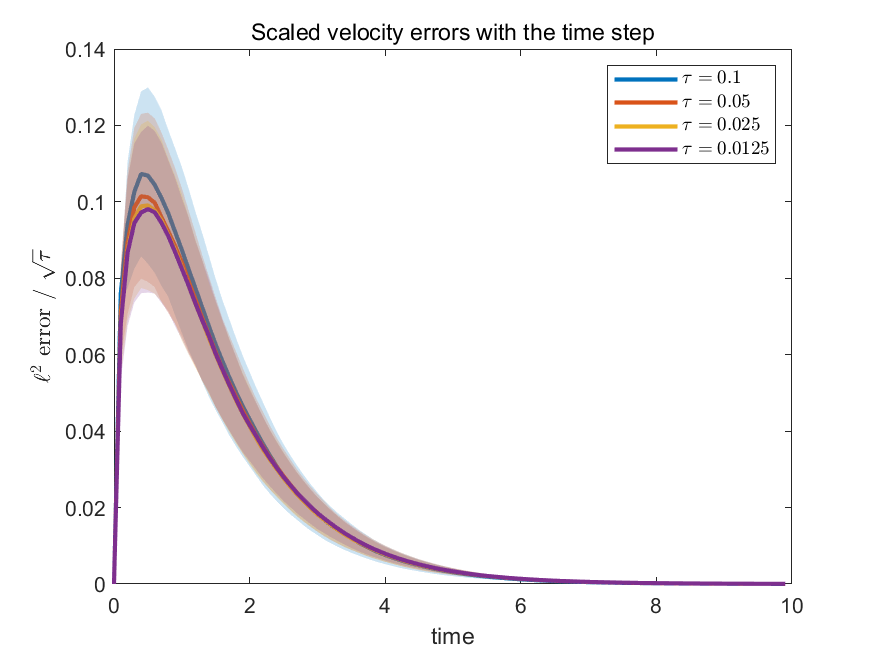}
        \label{fig:dtpsrbm1} 
        \end{minipage}}
        \caption{(RBM-1)(a): The $\ell^2$-errors of velocities from 1000 simulations, computed with different $\tau$. (b): Scaled error by the term $\sqrt{\tau}$. The scaled errors from different $\tau$ show similar values along time.}
        \label{fig:dt2rbm1}
\end{figure}

\paragraph{Conservation of the first moment.}
As a type of kinetic Monte Carlo method, the conservation of the first moment is a significant feature when compared to the Direct Monte Carlo method discussed in \cite{carrillo2019particle}. In that paper, the authors introduced a similar stochastic method with the RBM, referred to as MCgPC, which can be used to approximate stochastic mean-field models of swarming. By ignoring the random interaction kernel in MCgPC, we can rewrite it as Algorithm \ref{algo: dmc}, which represents a direct Monte Carlo method (MC).

The primary distinction between the MC and the RBM-r (or RBM-1) lies in whether they preserve the first moment (also known as momentum when the mass is considered as identity), as demonstrated in Proposition \ref{lem:M}. In Figure \ref{fig:MC}, we compare the first moments of one simulation, calculated by both the MC and the RBM-r with $N=2^6,$ $p=2$, $\tau=0.1$. Although both methods reach a balance after some time, only the RBM-r successfully preserves the first moment.

\begin{algorithm}\label{algo: dmc}
\caption{ The MC for \eqref{eq: cs}}
\For{$k=1$ to $\frac{T}{\tau}$}
{\For{$i=1$ to $N$}
    {Sample $p-1$ particles $j_1,\cdots,j_{p-1}$ uniformly without repetition among all particles.
    Update $(X^i,V^i)$ by solving 
    \begin{equation*}
        \left\{\begin{aligned}
            \partial_t X^i(t) =& V^i(t),\\
            \partial_t V^i(t) =& \frac{\kappa}{p-1}\sum\limits_{\ell =1}^{p-1}
            \psi(|X^j(t)-X^i(t)|)(V^j(t)-V^i(t)),
        \end{aligned}\right.
    \end{equation*}
    for $t\in [t_k, t_{k+1}).$ 
    }}
\end{algorithm}
\paragraph{Comparison with RBM-1.} 

In Figure \ref{fig:rbm1}, we present the $\ell^2$-errors derived from 100 simulations using the RBM-r approximation (Algorithm \ref{algo: the RBMr'}) and the RBM-1 approximation (Algorithm \ref{algo: the RBM1}). Solid lines represent the RBM-r approximation, while dashed lines with circles denote the RBM-1 approximation. We set the same time step $\tau=0.1$ and batch size $p=2.$ Various colors indicate the simulations with $N=2^4,2^6,2^8,2^{10}.$ Lines of the same color correspond to simulations with the same number of particles $N$.

Figure \ref{fig:rbm1} illustrates that the performance of the RBM-r is not superior to that of the RBM-1 due to the allowance for replacements, which aligns with the error estimates in our main theorem and the trajectories shown in Figure \ref{fig:traj}.

\begin{figure}
  \begin{minipage}[t]{0.45\linewidth}
    \centering
    \includegraphics[scale=0.46]{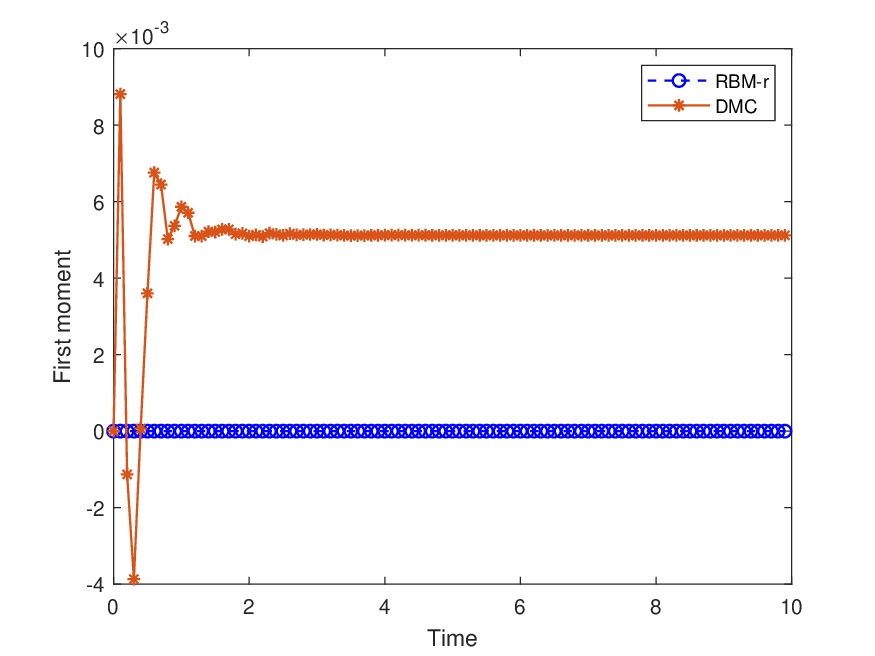}
    \caption{The first moment simulated by the MC and the RBM-r respectively. The dashed lines with circles represent the RBM-r approximation, while the solid lines with stars denote the MC approximation.}
    \label{fig:MC}
  \end{minipage}%
  \hspace{3mm}
  \begin{minipage}[t]{0.45\linewidth}
    \centering
    \includegraphics[scale=0.46]{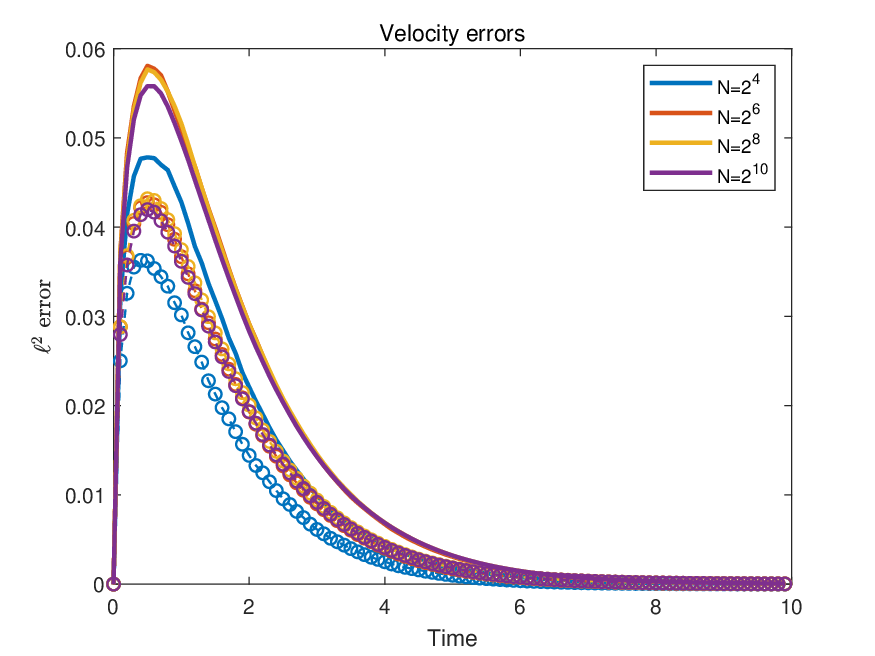}
    \caption{The $\ell^2$-errors are simulated for the RBM-r and RBM-1 methods, respectively. Here solid lines represent the RBM-r approximation, while the dashed lines with circles denote the RBM-1 approximation.}
    \label{fig:rbm1}
    \hspace{3mm}
  \end{minipage}
\end{figure}

\section{Conclusion}\label{sec:conclusion}

In this paper,  we analyzed the RBM approximations for the deterministic Cucker-Smale model. We provide an improved stochastic flocking analysis and the uniform-in-time error estimate independent of $N$ for the random batch method with and without replacement applied to the Cucker-Smale model. Our theoretical error estimates are further validated through numerical simulations. For future work, it would be interesting to consider the error estimate of the general consensus model. 

\section*{Acknowledgement}
We thank Prof. Lei Li and Prof. Yiwen Lin for their helps on the manuscript. 
This work is supported by the NSFC grants (No. 12031013, 12350710181), and the Shanghai Municipal Science and Technology Key Project (No. 22JC1402300).

\begin{appendix}
    
    \section{Proof of Proposition \ref{ineq: coarseD}.}\label{app:A}
    The proof is a variant of the Lemma 2.4 the for the RBM-1) in \cite{ha2021uniform}.
    \begin{proof}[Proof of Proposition \ref{ineq: coarseD}]
        For the first assertion, we claim that the relative velocities are non-increasing in time. Let $t\in [t_{m-1}, t_m)$ be given. Then one can choose time-dependent indices $k$ and $\ell$ such that 
        \begin{equation*}
            |\tilde{V}^k (t) - \tilde{V}^\ell (t)| = \mathcal{D}_{\tilde{V}}(t).
        \end{equation*}
        \paragraph{Case 1.} If both $k,\ell \in \mathcal{C}_m$, then one has 
        \begin{equation}\label{eq:A1}
        \begin{aligned}
            \frac{d}{dt} | \tilde{V}^k(t) -\tilde{V}^\ell(t)|^2 = &
            2(\tilde{V}^k(t) - \tilde{V}^\ell (t)) \cdot\frac{d}{dt}(\tilde{V}^k(t) - \tilde{V}^\ell (t))\\
            = & \frac{2}{p-1} \sum\limits_{j\in \mathcal{C}_m}\psi (|\tilde{X}^j - \tilde{X}^k|)(\tilde{V}^j - \tilde{V}^k)\cdot(\tilde{V}^k - \tilde{V}^\ell)\\
            &- \frac{2}{p-1}\sum\limits_{j\in\mathcal{C}_m}\psi (|\tilde{X}^j - \tilde{X}^\ell|)(\tilde{V}^j - \tilde{V}^\ell)\cdot(\tilde{V}^k - \tilde{V}^\ell).
        \end{aligned}
        \end{equation}
        In order to show that the right-hand side of \eqref{eq:A1} is not positive, we use the maximality of $| \tilde{V}^k(t) -\tilde{V}^\ell(t)|$ at time $t.$ Since
        \begin{equation*}
            | \tilde{V}^k(t) -\tilde{V}^\ell(t)| \ge | \tilde{V}^j(t) -\tilde{V}^\ell(t)|, \quad j = 1,\cdots,N,
        \end{equation*}
        one has
        \begin{multline}\label{eq:A2}
            (\tilde{V}^j(t)-\tilde{V}^k (t))\cdot(\tilde{V}^k(t)-\tilde{V}^\ell (t)) =
            - ((\tilde{V}^k-\tilde{V}^\ell) - (\tilde{V}^j-\tilde{V}^\ell))\cdot (\tilde{V}^k - \tilde{V}^\ell)\\
            \le - | \tilde{V}^k(t) -\tilde{V}^\ell(t)|^2 + | \tilde{V}^j(t) -\tilde{V}^\ell(t)|| \tilde{V}^k(t) -\tilde{V}^\ell(t)|\le 0, \quad j= 1,\cdots,N.
        \end{multline}
        Similarly, one has 
        \begin{equation*}
            (\tilde{V}^j(t)-\tilde{V}^\ell (t))\cdot (\tilde{V}^k(t)-\tilde{V}^\ell (t)) \ge 0, \quad j=1,\cdots,N.
        \end{equation*}

        \paragraph{Case 2.} If $k\in\mathcal{C}_m$ but $\ell\notin\mathcal{C}_m,$ then
        \begin{equation*}
            \begin{aligned}
                \frac{d}{dt} | \tilde{V}^k(t) -\tilde{V}^\ell(t)|^2 = &
                2(\tilde{V}^k(t) - \tilde{V}^\ell (t)) \cdot\frac{d}{dt}(\tilde{V}^k(t) - \tilde{V}^\ell (t))\\
                = & - \frac{2}{p-1}\sum\limits_{j\in\mathcal{C}_m}\psi (|\tilde{X}^j - \tilde{X}^\ell|)(\tilde{V}^j - \tilde{V}^\ell)\cdot(\tilde{V}^k - \tilde{V}^\ell)\\
                \le&0,
            \end{aligned}
        \end{equation*}
        by \eqref{eq:A2}.

        \paragraph{Case 3.} If $k,\ell\notin\mathcal{C}_m$, then
        \begin{equation*}
            \frac{d}{dt} | \tilde{V}^k(t) -\tilde{V}^\ell(t)|^2 = 0.
        \end{equation*}
        Generalizing the above three cases, we finish the proof.

    \end{proof}

    \section{Proof of Lemma \ref{lem:gronwalltype}.}\label{app:gronwall}

    \begin{proof}
    We basically repeat the same arguments appearing in Lemma 3.9 of \cite{ha2018local} using a bootstrapping argument. First, we will show that the uniform bound of $\mathcal{V}$, and then we use this uniform bound to derive the exponential decay of $\mathcal{V}$ in two steps.

\paragraph{Step A (Uniform boundedness of $\mathcal{V}$. )}
    In this step, we derive $\mathcal{V}(t) \le A_{\infty}(\gamma)\|f\|_{L^1}.$ For this, we set a maximal function $\mathcal{M}_\nu$ :
$$
\mathcal{M}_{\mathcal{V}}(t):=\max _{u \in[0, t]} \mathcal{V}(u), \quad t>0
$$

Next, we will show that
\begin{equation*}
    \mathcal{M}_{\mathcal{V}}(t) \le A_\infty(\gamma)\|f\|_{L^1}, \quad t \geq 0,
\end{equation*}

where $A_{\infty}(\gamma)=e^{\gamma \int_0^{\infty}} s e^{-\alpha s} d s$.
It follows from the first differential inequality that

$$
\sqrt{\mathcal{X}(t)} \le \sqrt{\mathcal{X}(0)}+\int_0^t \sqrt{\mathcal{V}(s)} ds.
$$

We substitute this into the second differential inequality to get

$$
\begin{aligned}
\frac{d \mathcal{V}}{d t} & \le-\alpha \mathcal{V}+\gamma e^{-\beta t} \mathcal{X}+f \le-\alpha \mathcal{V}+\gamma e^{-\beta t}\int_0^t \mathcal{V}(s) ds+f \\
& \le \gamma e^{-\beta t}\int_0^t \mathcal{V}(s) ds+f.
\end{aligned}
$$

Then, we integrate the above inequality to obtain

$$
\begin{aligned}
\mathcal{V}(u) & \le \mathcal{V}(0)+\int_0^u f(s) d s+\gamma \int_0^u e^{-\beta s}\int_0^s \mathcal{V}(u) d u\, d s \\
& \le \|f\|_{L^1}+\gamma \int_0^u e^{-\beta s}\int_0^s \mathcal{V}(u) d u\, d s , \quad u \le t.
\end{aligned}
$$

This implies
\begin{equation*}
    \mathcal{M}_{\mathcal{V}}(t) \le\|f\|_{L^1}+\gamma \int_0^t s e^{-\beta s} \mathcal{M}_{\mathcal{V}}(s) d s.
\end{equation*}
We set
\begin{equation*}
Z(t):=\|f\|_{L^1}+\gamma \int_0^t s e^{-\beta s} \mathcal{M}_{\mathcal{V}}(s) d s.
\end{equation*}
Then, it's clear that
$$
\mathcal{M}_{\mathcal{V}}(t) \le Z(t).
$$
We differentiate $Z(t)$ obtain

$$
\dot{Z}(t)=\gamma t e^{-\beta t} \mathcal{M}_{\mathcal{V}}(t) \le \gamma t e^{-\beta t} Z(t).
$$
This yields
\begin{equation*}
    Z(t) \le Z(0) e^{\gamma \int_0^t s e^{-\beta s} d s}\le A_{\infty}(\gamma)\,\|f\|_{L^1},
\end{equation*}
where $A_{\infty}(\gamma):=e^{\gamma \int_0^{\infty} s e^{-\beta s} d s}$.
Then, it holds that
$$
\mathcal{V}(t) \le \mathcal{M}_{\mathcal{V}}(t) \le Z(t) \le A_{\infty}(\gamma)\,\|f\|_{L^1}.
$$

\paragraph{Step B (Decay estimate of $\mathcal{V}(t)$. )}
We recall the original dynamics of $\mathcal{V}$ and note that
$$
\max _{0 \le t<\infty} t e^{-ct}=\frac{1}{c e}, \quad \max _{0 \le t<\infty} t^2 e^{-ct}=\frac{4}{e^2}
$$
for any $c>0$. Then we obtain
\begin{align*}
    \mathcal{V}(t) & \le \gamma\e^{-\alpha t} \int_0^t \e^{(\alpha-\beta)s}s A_\infty(\gamma)\|f\|_{L^1} ds + \int_0^t \e^{-\alpha(t-s)}f(s)\\
    &\le C \e^{-\frac{\alpha \wedge\beta}{2}t} \,\|f\|_{L^1} + \frac{1}{\alpha}f(\frac{t}{2}).
\end{align*}

\paragraph{Step C (Boundedness of $\mathcal{X}(t)$. )}
By the inequality
\begin{equation*}
    \frac{d}{dt}\sqrt{\mathcal{X}} \le \sqrt{\mathcal{V}},
\end{equation*}
it holds that
\begin{equation*}
    \mathcal{X}(t) \le \int_0^t \mathcal{V}(s)ds \le C\|f\|_{L^1}.
\end{equation*}
\end{proof}

\end{appendix}

\bibliographystyle{plain}
\bibliography{main}

\end{document}